\pgfplotsset{
every axis/.append style={
axis x line=box, axis y line=box,
tick label style={font=\tiny},			
font={\fontsize{8pt}{12pt}\selectfont}, 	
title style={font=\tiny,yshift=-1.0ex},		
xlabel style={font=\tiny,yshift=+1.0ex}		
}
}
\DeclareMathOperator*{\Refine}{\mathsf{Refine}}
\DeclareMathOperator*{\Doerfler}{D\text{\"o}rfler}
\newcommand{\ff}{\boldsymbol{f}}					
\renewcommand{\gg}{\boldsymbol{g}}					
\newcommand{\y}{\mathbf{y}}						
\newcommand{\N}{\mathbb{N}}						
\newcommand{\R}{\mathbb{R}}						
\newcommand{\E}{\mathbb{E}}						
\newcommand{\EE}{\mathcal{E}}
\newcommand{\BB}{\mathcal{B}}						
\newcommand{\NN}{\mathcal{N}} 						
\newcommand{\G}{\mathcal{G}} 						
\newcommand{\TT}{\mathcal{T}} 						
\newcommand{\MM}{\mathcal{M}}						
\newcommand{\RR}{\mathcal{R}}						
\renewcommand{\SS}{\mathcal{S}}						
\newcommand{\set}[3][\big]{#1\{#2\,:\,#3#1\}}		
\newcommand{\hull}[1]{{\rm span}\left(#1\right)}	
\newcommand{\reff}[2]{\stackrel{\eqref{#1}}{#2}}	
\newcommand{\Cov}{{\rm Cov}}    					
\newcommand{\supp}{{\rm supp}}    					
\newcommand{\Cloc}{C_{\rm loc}} 					
\newcommand{\Cthm}{C_{\rm thm}} 					
\newcommand{\Cstab}{C_{\rm stb}} 					
\newcommand{\CY}{C_Y} 					            
\newcommand{\azeromin}{a_0^{\rm min}}				
\newcommand{\azeromax}{a_0^{\rm max}}				
\newcommand{\dpiy}{d\pi(\y)}						
\newcommand{\gotI}{\mathfrak{I}} 					
\newcommand{\gotP}{\mathfrak{P}} 					
\newcommand{\gotQ}{\mathfrak{Q}} 					
\newcommand{\gotM}{\mathfrak{M}} 					
\newcommand{\VXP}{V_{X\gotP}}						
\newcommand{\VYP}{V_{Y\gotP}}						
\newcommand{\VXQ}{V_{X\gotQ}}						
\newcommand{\VXPhat}{\widehat{V}_{X\gotP}}			
\newcommand{\uXP}{u_{X\gotP}}						
\newcommand{\vXP}{v_{X\gotP}}						
\newcommand{\zXP}{z_{X\gotP}}						
\newcommand{\uXPhat}{\widehat{u}_{X\gotP}}			
\newcommand{\Xhat}{{\widehat{X}}}					
\newcommand{\eXPhat}{\widehat{e}_{X\gotP}}			
\newcommand{\VXhatP}{V_{\widehat{X}\gotP}}			
\newcommand{\vXhatP}{v_{\widehat{X}\gotP}}			
\newcommand{\thetaX}{\theta_X} 						
\newcommand{\thetaP}{\theta_{\mathfrak{P}}} 		
\newcommand{\eXQnu}{e_{X\gotQ}^{(\nu)}}				
\newcommand{\muXP}{\mu_{X\gotP}}					
\newcommand{\zetaXP}{\zeta_{X\gotP}}				
\newcommand{\uref}{u_{\text{ref}}}					
\newcommand{\Ql}{\gotQ_\ell}						
\newcommand{\ul}{u_\ell}							
\newcommand{\zl}{z_\ell}							
\newcommand{\uhatl}{\widetilde u_\ell}				
\newcommand{\zhatl}{\widetilde z_\ell}				
\newcommand{\eps}[1]{\varepsilon^{(#1)}}  			
\def\vertexY{\xi}
\def\basisY{\BB}
\def\KY{K}
\newcommand{\enorm}[3][]{#1\vert\hspace*{-.4mm}#1\Vert \, #2 \, #1\Vert\hspace*{-.4mm}#1\vert_{#3}}
\newcommand{\bnorm}[2][]{\enorm[#1]{#2}{}}
\newcommand{\norm}[3][]{#1\lVert #2#1\rVert_{#3}}		
\DeclareRobustCommand{\rchi}{{\mathpalette\irchi\relax}}
\newcommand{\irchi}[2]{\raisebox{\depth}{$#1\chi$}} 
\def\zero#1{%
	\foreach \index in {1, ..., #1} {%
		0 %
    	\ifnum \index < #1
    		\ %
		\fi
  	}
}
\def\eps#1#2{%
  	(%
  	\foreach \index in {1,...,#2} {%
		\ifnum \index = #1 %
			1 %
		\else
			0 %
		\fi
		\ifnum \index = #2 %
			)%
  		\else
			\ %
		\fi
  	}
}
\newcommand{\fontsizetwo}{0.80em}
\newcommand{\fontsizethree}{0.7em}
\newcommand{\smallfontthree}[1]{ {\fontsize{\fontsizethree}{\fontsizetwo}\selectfont{#1}} }
\newtheorem{lemma}{Lemma}[section]
\newtheorem{theorem}[lemma]{Theorem}
\newtheorem{remark}[lemma]{Remark}
\newtheorem{algorithm}[lemma]{Algorithm}
\definecolor{darkGreen}{RGB}{0,100,0}
\definecolor{myViolet}{RGB}{153,50,204}
\definecolor{myOrange}{HTML}{FF8000}
\definecolor{myBrown}{rgb}{0.6 0.4 0.2}
\definecolor{myBrownDark}{RGB}{168 114 60}
\definecolor{myYellow}{HTML}{CCCC00}
\definecolor{myCyan}{HTML}{99FFFF}
\definecolor{myGray}{HTML}{707070}
\def\@seccntformat#1{%
  \protect\textup{\protect\@secnumfont
    \ifnum\pdfstrcmp{subsection}{#1}=0 \bfseries\fi
    \csname the#1\endcsname
    \protect\@secnumpunct
  }%
}
\title{Goal-oriented error estimation and adaptivity\\
for elliptic PDEs with parametric or\\
uncertain inputs}
\author{Alex Bespalov}
\address{School of Mathematics, University of Birmingham, Edgbaston, Birmingham B15 2TT, UK}
\email{a.bespalov@bham.ac.uk}
\author{Dirk Praetorius}
\address{Institute for Analysis and Scientific Computing, TU Wien, Wiedner Hauptstra\ss{}e~8--10, 1040 Vienna, Austria}
\email{dirk.praetorius@asc.tuwien.ac.at}
\author{Leonardo Rocchi}
\address{School of Mathematics, University of Birmingham, Edgbaston, Birmingham B15 2TT, UK}
\email{lxr507@bham.ac.uk}
\author{Michele Ruggeri}
\address{Faculty of Mathematics, University of Vienna, Oskar-Morgenstern-Platz~1, 1090 Vienna, Austria}
\email{michele.ruggeri@univie.ac.at}
\subjclass[2010]{35R60, 65C20, 65N30, 65N15, 65N50}
\keywords{goal-oriented adaptivity, a~posteriori error analysis, two-level error estimate,
stochastic Galerkin methods, finite element methods, parametric PDEs}
\thanks{{\em Acknowledgements.} This work was initiated when AB visited the Institute for Analysis
and Scientific Computing at TU Wien in 2017. AB would like to thank the Isaac Newton Institute for
Mathematical Sciences for support and hospitality during the programme ``Uncertainty quantification
for complex systems: theory and methodologies'', where part of the work on this paper was undertaken.
This work was supported by the EPSRC grant EP/K032208/1.
The work of AB and LR was supported by the EPSRC under grant EP/P013791/1.
The work of DP and MR was supported by the Austrian Science Fund (FWF) under grants W1245 and F65.
}
\date{\today}
\begin{document}
%
\begin{abstract}
We use the ideas of goal-oriented error estimation and adaptivity to design and implement
an efficient adaptive algorithm for approximating linear quantities of interest derived
from solutions to elliptic partial differential equations (PDEs) with parametric or uncertain
inputs.
In the algorithm, the stochastic Galerkin finite element method (sGFEM) is used to approximate
the solutions to primal and dual problems that depend on a countably infinite number of
uncertain parameters.
Adaptive refinement is guided by an innovative strategy that combines the error reduction
indicators computed for spatial and parametric components of the primal and dual solutions.
The key theoretical ingredient is a novel two-level \textsl{a~posteriori} estimate
of the energy error in sGFEM approximations.
We prove that this error estimate is reliable and efficient.
The effectiveness of the goal-oriented error estimation strategy and the performance of the
goal-oriented adaptive algorithm are tested numerically for three representative model
problems with parametric coefficients and for three quantities of interest (including
the approximation of pointwise values).
\end{abstract}
\maketitle
%
\section{Introduction} 
\noindent
Partial differential equations (PDEs) with parametric uncertainty are ubiquitous in mathematical models
of physical phenomena and in engineering applications.
The efficient numerical solution of such PDE problems presents a number of theoretical and practical challenges.
A particularly challenging class of problems is represented by PDEs 
whose inputs and outputs depend on infinitely many uncertain parameters.
For this class of problems, numerical algorithms are sought that are able to identify
a finite set of most important parameters to be incorporated into the basis of the approximation space,
such that the solution to the underlying PDE, or the quantity of interest derived from the solution,
can be approximated to a prescribed accuracy (engineering tolerance) with minimal computational work.

Adaptive techniques based on rigorous \textsl{a~posteriori} error analysis of computed solutions
provide an effective mechanism for building approximation spaces and accelerating convergence
of computed solutions.
These techniques rely heavily on how the approximation error is estimated and controlled.
One may choose to estimate the error in the global energy norm and use the associated error indicators
to enhance the computed solution and drive the energy error estimate to zero.
However, in practical applications, simulations often target a specific (e.g., localized) feature of the solution,
called the quantity of interest and represented using a linear functional of the solution.
In these cases, the energy norm may give very little useful information about the simulation error.

Alternative error estimation techniques, such as goal-oriented error estimations, e.g.,
by the dual-weighted residual methods, allow to control the errors in the quantity of interest.
While for deterministic PDEs, these error estimation techniques and the associated adaptive algorithms
are very well studied
(see, e.g., \cite{MR1352472, MR1322810, beckerRannacher1996, prudhommeOden1999, beckerRannacher2001, gilesSuli2002, MR1960405}
for the \textsl{a~posteriori} error estimation and \cite{ms09, bet11, hp16, fpv16} for a rigorous convergence
analysis of adaptive algorithms), relatively little work has been done for PDEs with parametric or uncertain inputs.
For example, in the framework of (intrusive) stochastic Galerkin finite element methods (sGFEMs) (see, e.g.,~\cite{gs91, lps2014}),
\textsl{a~posteriori} error estimation of linear functionals of solutions to PDEs with parametric uncertainty is addressed
in~\cite{mathelinLeMaitre2007} and, for nonlinear problems, in~\cite{butlerDawsonWildey2011}.
In particular, in~\cite{mathelinLeMaitre2007}, a rigorous estimator for the error in the quantity of interest is derived
and several adaptive refinement strategies are discussed.
However, the authors comment that the proposed estimator lacks information about the structure of the estimated error;
in particular, it does not allow to assess individual contributions from spatial and parametric discretizations
and to perform anisotropic refinement of the parametric approximation space
(see~\cite[page~113]{mathelinLeMaitre2007}).

As for nonintrusive methods, the goal-oriented error estimation techniques and the associated
adaptive algorithms are proposed
in~\cite{almeidaOden2010} for the stochastic collocation method (see, e.g., \cite{babuvskaNobileTempone2007})
and in~\cite{emn16} for the multilevel Monte Carlo sampling (see, e.g.,~\cite{Giles15}).
\textsl{A~posteriori} error estimates for quantities of interest derived from generic surrogate approximations
(either intrusive or nonintrusive) are introduced in~\cite{bryantPrudhommeWildey2015}.
These estimates provide separate error indicators for spatial and parametric discretizations.
The indicators are then used to identify dominant sources of error and guide the adaptive algorithm
for approximating the quantity of interest.
Various adaptive refinement strategies are discussed in~\cite{bryantPrudhommeWildey2015}
and tested for model PDE problems with inputs that depend on a \emph{finite} number
of uncertain parameters.

Our main aim in this paper is to design an adaptive sGFEM algorithm
for accurate approximation of moments of a quantity of interest~$Q(u)$,
which is a linear functional of the solution $u$ to the following model problem whose coefficient depends linearly
on \emph{infinitely many} parameters:
\begin{equation} \label{eq:strongform}
\begin{aligned}
   -\nabla_{x} \cdot (a(x,\y) \nabla_{x} u(x,\y)) & = f(x),  && \quad x \in D,\ \y \in \Gamma,  \\
   u(x,\y) & = 0,                                                   && \quad  x \in \partial D,\ \y \in \Gamma.
\end{aligned}
\end{equation}
Here, $D \subset {\R}^2$ is a bounded Lipschitz domain
with polygonal boundary $\partial D$,
$\Gamma := \prod_{m=1}^\infty \Gamma_m$ is the parameter domain
with $\Gamma_m$ being bounded intervals in $\R$ for all $m \in \N$,
$f \in H^{-1}(D)$, and the parametric coefficient $a = a(\cdot, \y)$ is represented as
\begin{equation}\label{eq1:a}
 a(x,\y) = a_0(x) + \sum_{m=1}^\infty y_m a_m(x),
 \quad x \in D,\ \y = (y_1, y_2, \ldots) \in \Gamma,
\end{equation}
for a family of functions $a_m(x) \in L^\infty(D)$, $m \in \N_0$, and with the series converging uniformly in $L^\infty(D)$
(an example of such a representation is the Karhunen--Lo\`eve expansion of a random field with 
given covariance function and mean $a_0(x)$; see, e.g., \cite{gs91,lps2014}).

In this work, we are particularly interested in estimating and controlling
the expected error in the quantity of interest, $\mathbb{E}[Q(u - u_N)]$,
where $u_N$ is an approximation of $u$ via sGFEM.
This enables us to use the ideas of goal-oriented adaptivity, where one is interested in controlling the error
in the goal functional $G(u) := \mathbb{E}[Q(u)]$
(rather than, e.g., in the energy norm).

\subsection{Goal-oriented error estimation in the abstract setting} \label{subsec:abstract:goaloriented:setting}

In order to motivate the design of our adaptive algorithm, let us first recall the idea of goal-oriented error estimation.
Let $V$ be a Hilbert space and denote by $V'$ its dual space.
Let $B: V \times V \to \R$ be a continuous, elliptic, and symmetric bilinear form 
with the associated energy norm $\bnorm{\cdot}$, i.e., $\bnorm{v}^2 := B(v,v)$ for all $v \in V$.
Given two continuous linear functionals $F, G \in V'$,
our aim is to approximate $G(u)$, where $u \in V$ is the unique solution
of the \emph{primal problem}:
\begin{equation*} 
      B(u,v) = F(v)
\quad \text{for all $v \in V.$}
\end{equation*}
To this end, the standard approach (see, e.g., \cite{MR1352472, beckerRannacher2001, gilesSuli2002, MR1960405})
considers $z \in V$ as the unique solution to the \emph{dual problem}:
\begin{equation*} 
      B(v,z) = G(v) \quad \text{for all $v \in V$.}
\end{equation*}
Let $V_\star$ be a finite dimensional subspace of $V$. Let 
$u_\star\in V_\star$ (resp., $z_\star\in V_\star$) be the unique Galerkin approximation
of the solution to the primal (resp., dual) problem, i.e., 
\[
     B(u_\star,v_\star) = F(v_\star) \quad
     (\text{resp., } B(v_\star,z_\star) = G(v_\star)) \qquad \text{for all $v_\star \in V_\star.$}
\]
Then, it follows that
\begin{equation} \label{eq:errorInequality}
   \lvert G(u)-G(u_\star) \rvert
   = \lvert B(u-u_\star,z) \rvert
   = \lvert B(u-u_\star,z-z_\star)\rvert
       \leq \bnorm{u-u_\star}\,\bnorm{z-z_\star},
\end{equation}
where the second equality holds due to Galerkin orthogonality. 

Assume that $\mu_\star$ and $\zeta_\star$ are reliable estimates
for the energy errors $\bnorm{u-u_\star}$ and $\bnorm{z-z_\star}$, respectively, i.e.,  
\begin{equation} \label{energy:norms:estimators}
      \bnorm{u-u_\star} \lesssim \mu_\star
      \quad\text{and}\quad
      \bnorm{z-z_\star} \lesssim \zeta_\star
\end{equation}
(hereafter, $a \lesssim b$ means the existence
of a generic positive constant $C$ such that $a \le C b$, and $a \simeq b$ abbreviates $a \lesssim b \lesssim a$).
Hence, inequality \eqref{eq:errorInequality} implies that the product $\mu_\star \, \zeta_\star$ is a reliable error
estimate for the approximation error in the goal functional:
\begin{equation} \label{goal:error:reliability}
      \lvert G(u)-G(u_\star) \rvert  
      \lesssim \mu_\star \, \zeta_\star.
\end{equation}

\subsection{Main contributions and outline of the paper} 

In view of estimate \eqref{goal:error:reliability},
a goal-oriented adaptive algorithm must drive the \emph{product} of computable
energy error estimates $\mu_\star$ and $\zeta_\star$ to zero
with the best possible rate.
Aiming to design such an algorithm for the parametric model problem~\eqref{eq:strongform},
our first step is to find appropriate \textsl{a~posteriori} error estimates $\mu_\star$ and $\zeta_\star$.

There have been several very recent works that addressed \textsl{a~posteriori} error estimation
of stochastic Galerkin approximations for parametric problems, including
explicit residual-based \textsl{a~posteriori} error estimators in~\cite{egsz14, egsz15},
local equilibration error estimators in~\cite{em16},
and hierarchical estimators in~\cite{bps14, bs16}.
In this paper, we propose a novel \textsl{a~posteriori} error estimation technique
that can be used to control the energy errors in the primal and dual Galerkin approximations (see~\eqref{energy:norms:estimators}).
Similarly to the aforementioned works, we exploit the tensor product structure of the approximation space to separate the error contributions due to spatial approximations from
the ones that are due to parametric approximations.
Then, building on the hierarchical framework developed in~\cite{bps14, bs16}
and using the ideas from~\cite{MR1621301,ms99} (see also~\cite{dly89,MR1393909} for earlier works in this direction), we define a new two-level \textsl{a~posteriori} estimate of the energy error
and prove that it is reliable and efficient.
One of the key advantages of this new estimator is that
it avoids the solution of linear systems when estimating the errors coming from spatial approximations  
(while keeping the hierarchical structure of the estimator) and thus speeds up the computation.

The goal-oriented adaptive algorithm developed in this paper
draws information from the error estimates $\mu_\star,\,\zeta_\star$ and performs a balanced adaptive refinement
of spatial and parametric components of the finite-dimensional space  $V_\star$
to reduce the error in approximating the goal functional $G(u)$.
Specifically, the marking is performed by employing and extending the strategy proposed in~\cite{fpv16},
and the refinements are driven by the estimates of reduction in the product of energy errors
$\bnorm{u-u_\star} \, \bnorm{z-z_\star}$
(that provides an upper bound for the error in the goal functional, see~\eqref{eq:errorInequality}).

Finally, we use three representative examples of parametric PDEs posed over square, L-shaped, and slit domains
as well as different quantities of interest to demonstrate the performance and effectiveness
of our goal-oriented adaptive strategy.

The rest of the paper is organised as follows.
In Section~\ref{sec:weakdiscrete} we set the parametric model problem \eqref{eq:strongform} in weak form and introduce the sGFEM discretization for this problem.
Section~\ref{sec:errorestimators} concerns \textsl{a~posteriori} error estimation
in the energy norm for the model problem~\eqref{eq:strongform}.
First, we review a hierarchical \textsl{a~posteriori} error estimation strategy in~\S\ref{subsec:errorestimators:hierarchical}.
Then, in~\S\ref{sec:two:level:errorestimator},
a new two-level \textsl{a~posteriori} error estimate is introduced and proved to be reliable and efficient.
The goal-oriented adaptive sGFEM algorithm employing two-level error estimates
is presented in Section~\ref{sec:goaloriented:adaptivity},
and the results of numerical experiments are reported in Section~\ref{sec:numerical:examples}.
Finally, in Section~\ref{sec:remarks}, we summarize the results of the paper and discuss some possible extensions.

\section{Parametric model problem and its discretization} \label{sec:weakdiscrete}

\subsection{Weak formulation}

Consider the parametric model problem \eqref{eq:strongform} with the coefficient $a = a(x,\y)$ represented as in~\eqref{eq1:a}.
Without loss of generality (see \cite[Lemma~2.20]{sg11}), we assume
that $\Gamma_m := [-1,1]$ for all $m \in \N$.
In order to ensure convergence of the series in~\eqref{eq1:a} and positivity of $a(x,\y)$ for each $x \in D$ and $\y \in \Gamma$,
we assume that there exists $\azeromin, \azeromax > 0$ such that 
\begin{equation} \label{eq2:a}
0 < a_0^{\rm min} \le a_0(x) \le a_0^{\rm max} < \infty
\quad \text{a.e.\ in } D
\end{equation}
and
\begin{align}\label{eq3:a}
 \tau := \frac{1}{a_0^{\rm min}} \, \sum_{m=1}^\infty \norm{a_m}{L^\infty(D)} < 1.
\end{align}
Let us introduce a measure $\pi = \pi(\y)$ on $(\Gamma, \mathcal{B}(\Gamma))$,
where $\mathcal{B}(\Gamma)$ is the Borel $\sigma$-algebra on $\Gamma$.
We assume that $\pi$  is the product of symmetric probability measures $\pi_m$ on
$(\Gamma_m, \mathcal{B}(\Gamma_m))$, with $\mathcal{B}(\Gamma_m)$ being the Borel 
$\sigma$-algebra on $\Gamma_m$, i.e.,
$\pi(\y) = \prod_{m=1}^\infty \pi_m(y_m)$, $\y \in \Gamma$.

Now, we can consider the Bochner space $V := L^2_\pi(\Gamma; H^1_0(D))$, where
$L^2_\pi(\Gamma)$ is the standard Lebesgue space on $\Gamma$ (with respect to the measure $\pi$) and
$H^1_0(D)$ denotes the Sobolev space 
of functions in $H^1(D)$ vanishing at the boundary $\partial D$ in the sense of traces.
For each $u,v \in V$, we define the following symmetric bilinear forms
\begin{align}
     \notag
     B_0(u,v) &:= \int_\Gamma \int_D a_0(x) \nabla u(x,\y)\cdot\nabla v(x,\y) \, dx \, d\pi(\y),\\
     \label{def:B}
     B(u,v) &:= B_0(u,v) + \sum_{m=1}^\infty \int_\Gamma \int_D y_m \, a_m(x) \nabla u(x,\y)\cdot\nabla v(x,\y) \, dx \, d\pi(\y).
\end{align}
Note that assumptions~\eqref{eq2:a} and \eqref{eq3:a} ensure that $B_0(\cdot,\cdot)$ and $B(\cdot,\cdot)$ are continuous and elliptic on $V$.
Therefore, they induce norms that we denote by $\enorm{\cdot}{0}$ and $\bnorm{\cdot}$, respectively.
Moreover, with
$0 < \lambda := \frac{a_0^{\rm min}}{a_0^{\rm max}\,(1+\tau)} < 1 <
 \Lambda := \frac{a_0^{\rm max}}{a_0^{\rm min}\,(1-\tau)} < \infty$, there holds
\begin{equation} \label{eq:lambda}
 \lambda \, \bnorm{v}^2 \le \enorm{v}{0}^2 \le \Lambda \, \bnorm{v}^2
 \quad \text{for all } v \in V;
\end{equation}
see, e.g., \cite[Proposition 2.22]{sg11}.

We can now introduce the weak formulation of \eqref{eq:strongform} that reads as follows:
Given $f \in H^{-1}(D)$, find $u \in V$ such that
\begin{equation} \label{eq:weakform}
	B(u,v) = F(v) := \int_\Gamma \int_D f(x) v(x,\y) \, dx \, d\pi(\y)
	\quad \text{for all $v \in V$}.
\end{equation}
The Lax--Milgram lemma proves the existence and uniqueness of the solution $u \in V$ to~\eqref{eq:weakform}.
 
\subsection{Discrete formulations}

For any finite-dimensional subspace of $V$, problem~\eqref{eq:weakform}
can be discretized by using the Galerkin projection onto this subspace.
The construction of the finite-dimensional subspaces of $V$ relies on the fact that
the Bochner space $V = L^2_\pi(\Gamma; H^1_0(D))$ is isometrically isomorphic to the tensor product Hilbert space 
$H^1_0(D) \otimes L_\pi^2(\Gamma)$ (see, e.g., \cite[Theorem B.17, Remark C.24]{sg11}).
Therefore, we can define the finite-dimensional subspace of $V$
as the tensor product of independently constructed finite-dimensional
subspaces of $H^1_0(D)$ and $L_\pi^2(\Gamma)$.

Let $\TT$ be a conforming triangulation of $D$ into compact simplices, and
let $\NN_{\TT}$ denote the corresponding set of interior vertices.
For the finite-dimensional subspace of $H^1_0(D)$, we use the space of first-order (P1) finite element functions:
\[
   X = \SS^1_0(\TT) := \set{v \in H^1_0(D)}{v|_T \text{ is affine for all $T \in \TT$}}.
\]

Let us now introduce the finite-dimensional subspaces of $L^2_\pi(\Gamma)$.
For each $m \in \N$, let $(P_n^m)_{n\in\N_0}$ denote the sequence of univariate polynomials
which are orthonormal with respect to the inner product $\langle \cdot,\cdot \rangle_{\pi_m}$ in $L^2_{\pi_m}(\Gamma_m)$,
such that $P_n^m$ is a polynomial of degree $n \in \N_0$. 
It is well-known that these polynomials form an orthonormal basis of $L^2_{\pi _m}(\Gamma_m)$
and {can be constructed using} the three-term recurrence formula
(see, e.g., \cite[Theorem 1.29]{g04} and recall that $\pi_m$ is symmetric)
\begin{equation} \label{three:term:rec}
      \beta_n^m \, P_{n+1}^m(y_m)
      = y_m \, P_n^m(y_m) - \beta_{n-1}^m \, P_{n-1}^m (y_m)
      \quad \text{for all } 
      n \in \N_0
\end{equation}
with initialisation $P_0^m \equiv 1$, $P_{-1}^m \equiv 0$ and coefficients
\[
   \beta_{-1}^m := 1,\quad
    \beta_n^m := \bigg(\int_{\Gamma_m} \big(y_m \, P_n^m(y_m) - \beta_{n-1}^m \, P_{n-1}^m (y_m)\big)^2 \, d\pi_m(y_m)\bigg)^{1/2}
   \quad
   \text{ for } n \in \N_0.
\]

In order to construct an orthonormal basis in $L_\pi^2(\Gamma)$,
consider the following countable set of finitely supported multi-indices
\begin{equation*} 
      \gotI \!:=\! \set{ \nu=(\nu_1,\nu_2,\dots) \in \N_0^\N}{\!\#\supp(\nu) < \infty}
\text{ with } \supp(\nu) \!:=\! \set{m\in\N}{\nu_m\neq0}.
\end{equation*}
Throughout the paper, the set $\gotI$, as well as any of its subsets, will be called the \emph{index set}. 
For each index $\nu \in \gotI$, we define the tensor product polynomial
\[
   P_\nu(\y) := \prod_{m\in\supp(\nu)} P_{\nu_m}^m(y_m).
\]
The set $\set{P_\nu}{\nu\in\gotI}$ is an orthonormal basis of $L^2_\pi(\Gamma)$; see~\cite[Theorem~2.12]{sg11}.
Since the Bochner space $V$ is isometrically isomorphic to $L^2_{\pi}(\Gamma) \otimes H^1_0(D)$,
each function $v \in V$ can be represented in the form
\begin{equation} \label{eq:representation}
      v(x,\y) = \sum_{\nu \in \gotI} v_\nu(x) P_\nu(\y)
      \quad\text{with unique coefficients $v_\nu \in H_0^1(D)$}.
\end{equation}
There holds the following important, yet elementary, observation.

\begin{lemma}\label{lemma:orthogonal}
For all $v,w \in V$, the following equality holds:
\begin{equation} \label{eq1:lemma:orthogonal}
   B_0(v,w) = \sum_{\nu \in \gotI} \int_D a_0(x) \, \nabla v_\nu(x) \cdot \nabla w_\nu(x) \, dx.
\end{equation}
In particular,
\begin{equation} \label{eq2:lemma:orthogonal}
   \enorm{v}{0}^2 = \sum_{\nu \in \gotI} \norm{a_0^{1/2}\nabla v_\nu}{L^2(D)}^2.
\end{equation}
\end{lemma}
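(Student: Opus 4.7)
The plan is to exploit the orthonormality of the tensor product basis $\{P_\nu\}_{\nu\in\gotI}$ in $L^2_\pi(\Gamma)$ together with the continuity of the bilinear form $B_0(\cdot,\cdot)$ on $V$. The strategy is not to manipulate the infinite expansions~\eqref{eq:representation} directly inside the integral defining $B_0$, but rather to verify~\eqref{eq1:lemma:orthogonal} first on finite truncations and then pass to the limit using continuity of $B_0$ and the convergence of the partial sums in $V$.

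Concretely, let $(\gotI_N)_{N\in\N}$ be an increasing sequence of finite subsets of $\gotI$ with $\bigcup_N \gotI_N = \gotI$, and set $v^N := \sum_{\nu \in \gotI_N} v_\nu P_\nu$ and $w^N := \sum_{\mu \in \gotI_N} w_\mu P_\mu$. Since~\eqref{eq:representation} converges in $V$, we have $v^N \to v$ and $w^N \to w$ in $V$, and hence also in $\enorm{\cdot}{0}$ by virtue of the norm equivalence~\eqref{eq:lambda} and the continuity of $B_0$. For the finite sums, Fubini's theorem applies and, using bilinearity, we obtain
\begin{equation*}
B_0(v^N, w^N) = \sum_{\nu, \mu \in \gotI_N} \left(\int_D a_0(x)\, \nabla v_\nu(x)\cdot \nabla w_\mu(x)\, dx\right) \left(\int_\Gamma P_\nu(\y) P_\mu(\y)\, d\pi(\y)\right).
\end{equation*}
Orthonormality of $\{P_\nu\}_{\nu\in\gotI}$ in $L^2_\pi(\Gamma)$ forces the inner $\Gamma$-integral to equal $\delta_{\nu\mu}$, so only the diagonal terms survive:
\begin{equation*}
B_0(v^N, w^N) = \sum_{\nu \in \gotI_N} \int_D a_0(x)\, \nabla v_\nu(x) \cdot \nabla w_\nu(x)\, dx.
\end{equation*}

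It then remains to pass to the limit $N \to \infty$. On the left-hand side this is immediate by continuity of $B_0$ applied to the convergent sequences $v^N \to v$ and $w^N \to w$. On the right-hand side I would use that the isometric isomorphism $V \cong L^2_\pi(\Gamma) \otimes H^1_0(D)$, combined with~\eqref{eq2:a}, implies $\sum_{\nu \in \gotI} \norm{a_0^{1/2}\nabla v_\nu}{L^2(D)}^2 < \infty$ and similarly for $w_\nu$, so the Cauchy--Schwarz inequality in $\ell^2$ yields absolute convergence of the series and dominated convergence gives the desired limit. This proves~\eqref{eq1:lemma:orthogonal}. The identity~\eqref{eq2:lemma:orthogonal} follows at once by taking $w=v$ and noting $\int_D a_0|\nabla v_\nu|^2\,dx = \norm{a_0^{1/2}\nabla v_\nu}{L^2(D)}^2$. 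The only genuinely delicate point is the interchange of summation with the integrals over $D$ and $\Gamma$; handling it through truncation and continuity, rather than attempting a direct application of Fubini to the double series, neatly sidesteps this difficulty.
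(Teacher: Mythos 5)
Your proposal is correct and follows essentially the same route as the paper: expand $v$ and $w$ in the orthonormal basis $\{P_\nu\}_{\nu\in\gotI}$, factor each term of $B_0$ into a $D$-integral times a $\Gamma$-integral, and use orthonormality to retain only the diagonal terms. The only difference is that you make the interchange of summation and integration explicit via finite truncations, continuity of $B_0$, and $\ell^2$-summability of $\norm{a_0^{1/2}\nabla v_\nu}{L^2(D)}$, a step the paper carries out formally without comment.
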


\begin{proof}
Using representation~\eqref{eq:representation} for $v, w \in V$, we have that
\begin{align*}
   B_0(v,w) &= \sum_{\mu,\nu \in \gotI} \int_\Gamma \int_D a_0(x) \, \nabla v_\nu(x) \cdot \nabla w_\mu(x) P_\nu(\y) P_\mu(\y) \, dx \, d\pi(\y)
 \\
 &= \sum_{\mu,\nu \in \gotI} \bigg( \int_D a_0(x) \, \nabla v_\nu(x) \cdot \nabla w_\mu(x) \, dx \bigg)
 \bigg(\int_\Gamma P_\nu(\y) P_\mu(\y) \, d\pi(\y)\bigg).
\end{align*}
This proves~\eqref{eq1:lemma:orthogonal},
since $(P_\nu)_{\nu\in\gotI}$ is an orthonormal basis of $L^2_\pi(\Gamma)$.
Furthermore, selecting $w=v$ in~\eqref{eq1:lemma:orthogonal} we obtain~\eqref{eq2:lemma:orthogonal}.
\end{proof}

For any finite index set $\gotP \subset \gotI$,
the finite-dimensional subspace of $L^2_\pi(\Gamma)$
is given by $\hull{\set{P_\nu}{\nu\in\gotP}}$.
Thus, we can now define the finite-dimensional subspace $\VXP$ of~$V$~as
\begin{equation} \label{discrete:space:VXP}
  \VXP := X \otimes \hull{\set{P_\nu}{\nu\in\gotP}}.
\end{equation}
The discrete formulation of \eqref{eq:weakform} then reads as follows: Find $\uXP \in \VXP$ such that
\begin{equation} \label{eq:discreteform}
   B(\uXP,v_{X\gotP}) = F(v_{X\gotP})
   \quad \text{for all } v_{X\gotP} \in \VXP.
\end{equation}
Since $\VXP$ is a tensor product space, the Galerkin solution $\uXP {\in} \VXP$ can be represented~as
\[
  \uXP(x,\y) = \sum_{\nu \in \gotP} u_{\nu}(x) P_\nu(\y)
  \quad \text{with unique coefficients } u_{\nu} \in X.
\]
We implicitly assume that $\gotP$ always contains the zero-index ${\mathbf{0}} := (0,0,\dots)$.
We say that a parameter $y_m$ is \emph{active} in the index set $\gotP$
(and hence, in the Galerkin solution~$\uXP$) if $m \in \supp(\gotP) := \bigcup_{\nu \in \gotP} \supp(\nu)$.

The approximation provided by $\uXP$ can be improved by enriching the subspace $\VXP$.
There are many ways how one can combine the enrichments in spatial (finite element) approximations
with the ones in polynomial approximations on the parameter domain
(see, e.g., \cite{bps14, egsz14, bs16}).
Here we follow the approach developed in~\cite{bs16}.

Let us consider a conforming triangulation $\widehat\TT$ of $D$ obtained by a uniform refinement of $\TT$. 
We choose the enriched finite element space as
\begin{equation} \label{eq:enrichment:space}
 \widehat X := \SS^1_0(\widehat\TT) = X \oplus Y,
 \quad \text{where} \quad 
 Y := \set{v \in \widehat{X}}{v(\xi_\TT) = 0\ \ \text{for all $\xi_\TT \in \NN_\TT$}}.
\end{equation}
Here, the subspace $Y \subset H_0^1(D)$ is called the \emph{detail (finite element) space}. 
Note that the sum in~\eqref{eq:enrichment:space} is indeed a direct sum, i.e., $X \cap Y = \{0\}$.

In order to enrich the polynomial space on $\Gamma$,
we consider a finite index set $\gotQ\subset\gotI$ such that $\gotP\cap\gotQ = \emptyset$
and define the enriched index set $\widehat\gotP := \gotP \cup \gotQ$. 
The subset $\gotQ$ is called the \emph{detail index set}. 

The enriched finite-dimensional subspace of $V$ is then defined as follows:
\begin{equation*} 
      \VXPhat := \VXP\, \oplus\, \VYP \,\oplus\, \VXQ,
\end{equation*}
where
\begin{equation*} 
      \VYP := Y \otimes \hull{\set{P_\nu}{\nu\in\gotP}}
      \quad \text{and} \quad 
      \VXQ := X \otimes \hull{\set{P_\nu}{\nu\in\gotQ}}.
\end{equation*}
Note that $\VXP \oplus \VYP$ is a direct sum, whereas the direct sums $\VXP \oplus \VXQ$
and $\VYP \oplus \VXQ$ are also orthogonal, since $\gotP\cap\gotQ = \emptyset$.

Consider now the discrete formulation posed on $\VXPhat$:
Find $\uXPhat \in \VXPhat$ such that
\begin{equation} \label{eq:discreteform:enriched}
   B(\uXPhat, \widehat v_{X\gotP}) = F(\widehat v_{X\gotP})
   \quad \text{for all } \widehat v_{X\gotP} \in \VXPhat.
\end{equation}
Since $\VXP \subset \VXPhat$, the Galerkin orthogonality 
\begin{equation*}
 B(u - \uXPhat , \widehat v_{X\gotP}) = 0
 \quad \text{for all } \widehat v_{X\gotP} \in \VXPhat
\end{equation*}
and the symmetry of the bilinear form $B(\cdot,\cdot)$ imply that
\begin{equation} \label{eq:hh2:Pythagoras}
 \bnorm{u - \uXPhat}^2 + \bnorm{\uXPhat -\uXP}^2
 = \bnorm{u -\uXP}^2.
\end{equation}
In particular, this yields that $\bnorm{u - \uXPhat} \le \bnorm{u -\uXP}$.
As in~\cite{bs16}, we assume that a stronger property (usually referred to as \emph{saturation assumption}) holds:
There exists a uniform constant $0 < \beta < 1$ such that
\begin{equation}\label{eq:saturation}
 \bnorm{u - \uXPhat} \le \beta \, \bnorm{u -\uXP}.
\end{equation}

\section{\textsl{A~{\scriptsize POSTERIORI}} error estimation in the energy norm} \label{sec:errorestimators}
\noindent
Let $\uXP \in \VXP$ and $\uXPhat \in \VXPhat$ be two Galerkin approximations defined by~\eqref{eq:discreteform}
and~\eqref{eq:discreteform:enriched}, respectively.
It is well known that the two-level error $\uXP - \uXPhat$ provides
a reliable and efficient estimate for the error $u -\uXP$ in the energy norm.
Indeed, on the one hand, \eqref{eq:hh2:Pythagoras} implies the efficiency, i.e.,
\begin{equation} \label{eq:hh2:efficient}
 \bnorm{\uXPhat -\uXP} \le \bnorm{u -\uXP},
\end{equation}
and, on the other hand, it follows from \eqref{eq:hh2:Pythagoras} and elementary calculations
that the saturation assumption~\eqref{eq:saturation} is equivalent to the reliability, i.e.,
\begin{equation}\label{eq:hh2:reliable}
 \bnorm{u -\uXP} \le C \, \bnorm{\uXPhat -\uXP}
 \quad \text{with } C := (1-\beta^2)^{-1/2}.
\end{equation}
However, this error estimate bears the computational cost associated with finding the enhanced solution $\uXPhat \in \VXPhat$.
In addition to that, the evaluation of the norm $\bnorm{\uXP - \uXPhat}$ is expensive, due to its dependence on the
coefficients $a_m$ corresponding to all active parameters $y_m$ in the index set $\gotP$; see~\eqref{def:B}.
Therefore, our aim is to derive lower and upper bounds 
for $\bnorm{\uXPhat -\uXP}$ in terms of another error estimate
that avoids the computation of $\uXPhat$ and is inexpensive to evaluate.
Two approaches to this task are discussed in the next two subsections. 

\subsection{Hierarchical error estimators} \label{subsec:errorestimators:hierarchical}

One way to avoid the computation of $\uXPhat$ is to use a standard hierarchical approach to \textsl{a~posteriori} error estimation
that goes back to~\cite{BankW_85_SAE} (see also~\cite{bankHierarchical1996, ao00}).
In the context of parametric operator equations (and, in particular, for the parametric model problem~\eqref{eq:strongform}),
this approach was pursued in~\cite{bps14, bs16}.
Let us briefly recall the construction of the error estimators proposed in~\cite{bs16}.

Let $\eXPhat \in \VXPhat$ be the unique solution to the problem
\begin{equation} \label{eq:hat-e_XP}
   B_0(\eXPhat , \widehat v_{X\gotP}) = F(\widehat v_{X\gotP}) - B(\uXP, \widehat v_{X\gotP})
   \quad \text{for all } \widehat v_{X\gotP} \in \VXPhat.
\end{equation}
It follows from \eqref{eq:hat-e_XP} that $B_0(\eXPhat , \widehat v_{X\gotP}) = B(\uXPhat - \uXP , \widehat v_{X\gotP})$ for any $\widehat v_{X\gotP} \in \VXPhat$.
Hence, selecting $\widehat v_{X\gotP} = \eXPhat$ and $\widehat v_{X\gotP} = \uXPhat - \uXP$, 
the variational formulation~\eqref{eq:hat-e_XP} and the equivalence between $\bnorm{\cdot}$ and $\enorm{\cdot}{0}$ (see~\eqref{eq:lambda}) prove that
\begin{equation} \label{est:hat-e_XP}
 \lambda \, \enorm{\eXPhat}{0}^2 \le \bnorm{\uXPhat - \uXP}^2
 \le \Lambda \, \enorm{\eXPhat}{0}^2.
\end{equation}
Now, let $e_{\widehat X\gotP} \in \VXP \oplus \VYP =: V_{\widehat X\gotP}$ be the unique solution to
\begin{equation} \label{eq:e_hatXP}
 B_0(e_{\widehat X\gotP} , v_{\widehat X\gotP}) = F(v_{\widehat X\gotP}) - B(\uXP, v_{\widehat X\gotP})
 \quad \text{for all } v_{\widehat X\gotP} \in V_{\widehat X\gotP},
\end{equation}
and, for each $\nu \in \gotQ$, let $e_{X\gotQ}^{(\nu)} \in X \otimes \hull{P_\nu}$ be the unique solution to
\begin{equation} \label{eq:e_Xnu} 
 B_0(e_{X\gotQ}^{(\nu)} , v_{X}P_\nu) = F(v_{X}P_\nu) - B(\uXP, v_{X}P_\nu) \quad \text{for all $v_X \in X$}.
\end{equation}
Note that all subspaces $X \otimes \hull{P_\nu}$ ($\nu \in \gotQ$) are pairwise orthogonal with respect to $B_0(\cdot,\cdot)$.
Moreover, since $\gotP \cap \gotQ = \emptyset$,
the subspace $V_{\widehat X\gotP}$ is $B_0(\cdot,\cdot)$-orthogonal to $X \otimes \hull{P_\nu}$ for each $\nu \in \gotQ$.
Therefore, the following decomposition holds
\begin{equation} \label{eq0:final}
 \eXPhat = e_{\widehat X\gotP} + \sum_{\nu \in \gotQ} e_{X\gotQ}^{(\nu)}
 \quad \text{with} \quad
 \enorm{\eXPhat}{0}^2 = \enorm{e_{\widehat X\gotP}}{0}^2 + \sum_{\nu \in \gotQ} \enorm{e_{X\gotQ}^{(\nu)}}{0}^2.
\end{equation}
Replacing $e_{\widehat X\gotP}$ in \eqref{eq0:final} with the hierarchical error estimator $e_{Y\gotP} \in \VYP$ satisfying
\begin{equation} \label{eq:e_YP}
 B_0(e_{Y\gotP} , v_{Y\gotP}) = F(v_{Y\gotP}) - B(\uXP, v_{Y\gotP})
 \quad \text{for all } v_{Y\gotP} \in \VYP,
\end{equation}
\cite{bs16} introduces the following \textsl{a~posteriori} error estimate
\begin{equation} \label{est:bs16}
 \eta_{X\gotP}^2 := \enorm{e_{Y\gotP}}{0}^2 + \sum_{\nu \in \gotQ} \enorm{e_{X\gotQ}^{(\nu)}}{0}^2.
\end{equation}
We refer to $e_{Y\gotP}$ and $e_{X\gotQ}^{(\nu)}$ as the spatial and parametric error estimators, respectively.
Note that the parameter-free $B_0(\cdot,\cdot)$-norm is used in \eqref{est:bs16}
for efficient evaluation of the error estimators of both types.

Under the saturation assumption~\eqref{eq:saturation},
it has been shown in~\cite[Theorem~4.1]{bs16} that $\eta_{X\gotP}$
provides an efficient and reliable estimate for the energy norm of the discretization error.
In particular, the following inequalities hold
\begin{equation*} 
      \sqrt{\lambda} \, \eta_{X\gotP} \leq \bnorm{u - \uXP} \leq
      \frac{\sqrt \Lambda}{\sqrt{1 - \beta^2}\,\sqrt{1 - \gamma^2}} \, \eta_{X\gotP},
\end{equation*}
where $\lambda, \Lambda$ are the constants in \eqref{eq:lambda},
$\beta \in [0,1)$ is the saturation constant in \eqref{eq:saturation},
and
$\gamma \in [0,1)$ is the smallest constant in the 
strengthened Cauchy--Schwarz inequality
for the finite element subspaces $X$ and $Y$ (see, e.g., \cite[equation~(5.26)]{ao00}), i.e.,
\[
   \gamma = \sup_{u \in X,\, v \in Y}
                     \frac{\big| (a_0 \nabla u, \nabla v)_{L^2(D)} \big|}
                            {\norm{a_0^{1/2}\nabla u}{L^2(D)} \, \norm{a_0^{1/2}\nabla v}{L^2(D)}}.
\]

Note that in order to compute the spatial error estimator $e_{Y\gotP}$ and the parametric error estimators $e_{X\gotQ}^{(\nu)}$ ($\nu \in \gotQ$),
one needs to solve the linear systems associated
with discrete formulations~\eqref{eq:e_YP} 
and~\eqref{eq:e_Xnu}, respectively. 
In the following section, we propose a two-level error estimation technique
that avoids the solution of the linear system for the spatial error estimator.

\subsection{Two-level \textsl{a~posteriori} error estimation in the energy norm} \label{sec:two:level:errorestimator}

Recall that $\widehat\TT$ denotes a uniform refinement of the triangulation $\TT$.
Let $\NN_{\widehat \TT}$ denote the set of interior vertices of $\widehat\TT$ and suppose that 
$\NN_{\widehat \TT} \backslash \NN_\TT = \{\vertexY_1, \dots, \vertexY_n\}$. 
For each new vertex $\vertexY_j \in \NN_{\widehat \TT} \backslash \NN_\TT$, let $\varphi_j \in \widehat X$ be 
the corresponding hat function, i.e., $\varphi_j(\vertexY_j) = 1$ and $\varphi_j(\vertexY) = 0$ for all 
$\vertexY \in \NN_{\widehat \TT} \backslash \{\vertexY_j\}$. 
Then, the set $\basisY := \{\varphi_1, \dots, \varphi_n\}$ is a basis of
the detail finite element space $Y$ defined in~\eqref{eq:enrichment:space}.
Moreover, there exists a constant $\KY \ge 1$ such that 
\begin{equation} \label{K_constant}
   \# \set{\varphi_j \in \basisY}{\text{interior}\big(\supp(\varphi_j) \cap T\big) \neq \emptyset} \le \KY
   \quad \text{for all } T \in \TT.
\end{equation}

Turning now to the detail index set $\gotQ \subset \gotI$, we follow the construction suggested in~\cite{bs16}.
Let $\varepsilon^{(m)} := ( \varepsilon^{(m)}_1,  \varepsilon^{(m)}_2,\dots)$ ($m \in \N$) denote
the Kronecker delta index such that $\varepsilon^{(m)}_k =\delta_{mk}$, for all $k \in \N$. 
For a fixed $\overline{M} \in \N$, define
\begin{equation} \label{finite:index:set:Q}
\gotQ := \set{ \mu \in \gotI\setminus \gotP}
                              {\mu  = \nu \pm \varepsilon^{(m)} \text{ for some } \nu \in \gotP
                              \text{ and some } m = 1,\dots, M_\gotP + \overline{M} },
\end{equation}
where $M_\gotP \in \N$ is the number of active parameters in the index set $\gotP$, that is 
\begin{equation*}
  M_\gotP := 
  \begin{cases}
           0        &  \text{if $\gotP = \{ (0,0,\dots) \} $}, \\
           \max \set{ \max (\supp (\nu) ) } {\nu \in \gotP \setminus \{ (0,0,\dots) \} } & \text{otherwise}.
  \end{cases}
\end{equation*}
Thus, for a given $\gotP  \subset \gotI$, the index set $\gotQ$ defined by~\eqref{finite:index:set:Q} contains
only those ``neighbors'' of all indices in $\gotP$ that have up to $M_\gotP + \overline{M}$ active parameters,
that is $\overline{M}$ parameters more than currently activated in the index set $\gotP$
(we refer to Lemma~4.3 and Corollary~4.1 in \cite{bs16} for theoretical underpinnings of this construction). 

Having fixed the detail space $Y$ and the detail index set $\gotQ$, we can now define the following estimate
of the energy error $\bnorm{u - \uXP}$,
which avoids the computation of $e_{\widehat X\gotP}$ from~\eqref{eq:e_hatXP}:
\begin{equation} \label{def:mu}
          \muXP^2 := \sum_{\nu \in \gotP} \sum_{j=1}^n 
          \frac{|F(\varphi_jP_\nu)-B(\uXP,\varphi_jP_\nu)|^2}{\norm{a_0^{1/2}\nabla\varphi_j}{L^2(D)}^2}
           + \sum_{\nu \in \gotQ} \enorm{e_{X\gotQ}^{(\nu)}}{0}^2,
\end{equation}
where $\eXQnu$ are defined in \eqref{eq:e_Xnu} for each $\nu \in \gotQ$.

The following theorem is the main result of this section.

\begin{theorem}\label{theorem:twolevel}
Let $u \in V$ be the solution to problem~\eqref{eq:weakform}, and let
$\uXP \in \VXP$ and $\uXPhat \in  \VXPhat$ be two Galerkin approximations satisfying
\eqref{eq:discreteform} and \eqref{eq:discreteform:enriched}, respectively.
Then, there exists a constant $\Cthm \ge 1$, which depends only
on the shape regularity of $\TT$ and $\widehat\TT$, the (local) mesh-refinement rule,
and the mean field $a_0$, such that the error estimate $\muXP$ defined by~\eqref{def:mu} satisfies
\begin{equation} \label{eq1:theorem:twolevel}
\frac{\lambda}{\KY} \, \muXP^2
\le \bnorm{\uXPhat - \uXP}^2
\le \Lambda \Cthm  \, \muXP^2,
\end{equation}
where $\lambda, \Lambda$ are the constants in \eqref{eq:lambda} and
$\KY$ is the constant in~\eqref{K_constant}.

Furthermore, under the saturation assumption~\eqref{eq:saturation}, there holds
\begin{equation} \label{eq2:theorem:twolevel}
 \frac{\lambda}{\KY} \, \muXP^2
 \le \bnorm{u - \uXP}^2 
 \le \frac{\Lambda \Cthm}{1-\beta^2} \, \muXP^2,
\end{equation}
where $\beta \in [0,1)$ is the saturation constant in \eqref{eq:saturation}.
\end{theorem}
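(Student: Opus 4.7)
\emph{Plan.} The plan is to isolate \eqref{eq1:theorem:twolevel} as the core of the theorem and then derive \eqref{eq2:theorem:twolevel} from it via the saturation assumption. The starting point for \eqref{eq1:theorem:twolevel} is the combination of \eqref{est:hat-e_XP} with the orthogonal decomposition \eqref{eq0:final}:
\[
 \bnorm{\uXPhat - \uXP}^2 \simeq \enorm{\eXPhat}{0}^2 = \enorm{e_{\widehat X\gotP}}{0}^2 + \sum_{\nu\in\gotQ}\enorm{\eXQnu}{0}^2.
\]
Since the second sum above matches the parametric part of $\muXP^2$ verbatim, the theorem reduces to proving that the first sum in \eqref{def:mu} is equivalent to $\enorm{e_{\widehat X\gotP}}{0}^2$, with lower constant $\KY^{-1}$ and upper constant depending only on mesh regularity and $a_0$.

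Next, I will use Lemma~\ref{lemma:orthogonal} to decouple the parametric modes. Writing $e_{\widehat X\gotP}(x,\y) = \sum_{\nu\in\gotP} e_\nu(x) P_\nu(\y)$ with $e_\nu \in \widehat X$, the defining equation~\eqref{eq:e_hatXP} is equivalent to the family of $a_0$-weighted scalar Poisson problems
\[
 \int_D a_0\,\nabla e_\nu \cdot \nabla \widehat v\,dx = \rho_\nu(\widehat v):= F(\widehat v P_\nu) - B(\uXP, \widehat v P_\nu) \quad \text{for all } \widehat v \in \widehat X,\ \nu\in\gotP,
\]
together with the identity $\enorm{e_{\widehat X\gotP}}{0}^2 = \sum_{\nu\in\gotP}\norm{a_0^{1/2}\nabla e_\nu}{L^2(D)}^2$. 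Crucially, $\rho_\nu$ annihilates $X$ by \eqref{eq:discreteform}, so each $e_\nu$ is the $a_0$-weighted Galerkin error in the hierarchical pair $X\subset\widehat X$. The remaining task is a modewise two-level estimator equivalence for a single $a_0$-weighted Laplacian.

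For this modewise equivalence I will invoke the classical two-level a~posteriori estimator framework (cf.~\cite{MR1621301,ms99,dly89,MR1393909}) applied to the form $B_0$. The lower (efficiency) bound
\[
 \sum_{j=1}^n \frac{|\rho_\nu(\varphi_j)|^2}{\norm{a_0^{1/2}\nabla\varphi_j}{L^2(D)}^2} \le \KY\,\norm{a_0^{1/2}\nabla e_\nu}{L^2(D)}^2
\]
follows by applying Cauchy--Schwarz to $\rho_\nu(\varphi_j) = \int_D a_0\,\nabla e_\nu\cdot\nabla\varphi_j\,dx$ (with the integral restricted to $\supp\varphi_j$) and summing via the finite-overlap bound~\eqref{K_constant}. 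The upper (reliability) bound is the main technical ingredient: splitting $e_\nu = e_\nu^X + e_\nu^Y$ with $e_\nu^Y\in Y$, the annihilation $\rho_\nu|_X = 0$ gives $\norm{a_0^{1/2}\nabla e_\nu}{L^2(D)}^2 = \rho_\nu(e_\nu^Y)$; expanding $e_\nu^Y = \sum_j a_j^Y\varphi_j$, applying Cauchy--Schwarz, and invoking (i) the $a_0$-weighted hat-basis stability on $Y$ to control $\sum_j (a_j^Y)^2\norm{a_0^{1/2}\nabla\varphi_j}{L^2(D)}^2$ by $\norm{a_0^{1/2}\nabla e_\nu^Y}{L^2(D)}^2$, together with (ii) the strengthened Cauchy--Schwarz inequality between $X$ and $Y$ to replace $e_\nu^Y$ by $e_\nu$, yields the bound with a constant $\Cthm$ depending only on shape regularity, the local refinement rule, and $a_0$. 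Summing over $\nu\in\gotP$ then produces \eqref{eq1:theorem:twolevel}.

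Finally, \eqref{eq2:theorem:twolevel} follows quickly: the lower half combines \eqref{eq1:theorem:twolevel} with \eqref{eq:hh2:efficient}, while the upper half combines \eqref{eq1:theorem:twolevel} with the Pythagoras identity \eqref{eq:hh2:Pythagoras} and the saturation assumption \eqref{eq:saturation} exactly as in \eqref{eq:hh2:reliable}. The principal obstacle in the whole argument is the reliability half of the modewise two-level equivalence, which requires weaving together an $a_0$-weighted hat-basis stability estimate on $Y$ with the strengthened Cauchy--Schwarz inequality; once those two ingredients are in place, the rest of the proof is routine assembly.
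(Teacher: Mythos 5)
Your proposal is correct, and its overall architecture coincides with the paper's: both reduce \eqref{eq1:theorem:twolevel} to the auxiliary error function via \eqref{est:hat-e_XP} and the orthogonal decomposition \eqref{eq0:final}, observe that the parametric part of $\muXP^2$ matches $\sum_{\nu\in\gotQ}\enorm{\eXQnu}{0}^2$ verbatim, and then establish the two-sided equivalence between the spatial part of $\muXP^2$ and $\enorm{e_{\widehat X\gotP}}{0}^2$; the passage to \eqref{eq2:theorem:twolevel} via \eqref{eq:hh2:efficient}--\eqref{eq:hh2:reliable} is identical. Where you genuinely diverge is in the core equivalence. The paper keeps the tensor-product structure and proves an abstract stable-decomposition result (Lemma~\ref{lemma:twolevel}) for the $B_0$-orthogonal projections $\G_{X\gotP}$ and $\G_{j\nu}$, whose reliability half rests on the nodal-interpolation splitting of Lemma~\ref{lemma:interpolation} (constant $\Cstab$) combined with the localization estimate \eqref{eq:lemma:localization}; the estimator then appears because $\enorm{\G_{j\nu}e_{\widehat X\gotP}}{0}$ evaluates exactly to the quotient in \eqref{def:mu}. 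You instead decouple the modes explicitly via Lemma~\ref{lemma:orthogonal} into scalar $a_0$-weighted problems for $e_\nu\in\widehat X$ and run the classical single-PDE two-level argument: your efficiency bound (local Cauchy--Schwarz plus the overlap bound \eqref{K_constant}) yields the same constant $\KY$ as the paper, and your reliability bound replaces the nodal-interpolation device by the direct-sum splitting $e_\nu=e_\nu^X+e_\nu^Y$ together with the strengthened Cauchy--Schwarz inequality for the pair $(X,Y)$ (constant $\gamma<1$), plus the same hat-basis stability on $Y$. Since $\gamma$ depends only on shape regularity, the refinement rule, and $a_0$ (a standard local scaling fact, which the paper itself quotes from \cite{ao00} when discussing the estimator of \cite{bs16}), your constant $\Cthm\simeq\Cloc/(1-\gamma^2)$ has the dependencies claimed in the theorem, just as the paper's $\Cthm=(1+\Cstab)^2+\Cloc\Cstab^2$ does. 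What your route buys is a more transparent, modewise reduction to textbook two-level theory at the price of importing the strengthened Cauchy--Schwarz constant; what the paper's route buys is a self-contained projection lemma (valid for arbitrary $\vXhatP$, not just the residual representative) that avoids $\gamma$ altogether and is reused elsewhere in the error-reduction analysis.
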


\begin{remark} 
On the one hand, Theorem~{\rm \ref{theorem:twolevel}} shows that $\muXP$ provides a reliable and efficient estimate
for the energy norm of the error (see~\eqref{eq2:theorem:twolevel}). 
On the other hand, recall that $\bnorm{\uXPhat - \uXP}$ is the error reduction (in the energy norm)
that would be achieved if the
enhanced solution $\uXPhat \in \widehat{V}_{X\gotP}$ were to be computed (see~\eqref{eq:hh2:Pythagoras}).
Hence,
inequalities~\eqref{eq1:theorem:twolevel} show that $\muXP$ also provides an estimate for this error reduction.
Moreover, note that Theorem~{\rm \ref{theorem:twolevel}} holds for any finite detail index set $\gotQ \subset \gotI \backslash \gotP$
and any conforming refinement $\widehat\TT$ of $\TT$ (and the corresponding detail space $Y$). Finally, we stress that our proof of
Theorem~{\rm \ref{theorem:twolevel}} holds for any spatial dimension,
while we restrict the proof to 2D to ease the presentation.
\end{remark}%

\begin{remark}
For the implementation of $\muXP$, note that the spatial contributions include:
\begin{itemize}
\item
in the numerator, the entries of the algebraic residual of $\uXP$, where the Ga\-ler\-kin data are computed with respect to the enrichment $V_{Y\gotP}$;
\item
in the denominator, the diagonal elements of the spatial stiffness matrix with 
respect to the detail space $Y$.
\end{itemize}
Moreover, the denominator can be easily simplified.
Suppose that $T \in \TT$ and $\varphi_j \in \basisY$ with 
$\supp(\varphi_j) \subseteq {\rm patch}(T)$. With $h_T$ being the diameter of $T$, there holds
\begin{equation*}
 \norm{a_0^{1/2}\nabla\varphi_j}{L^2(D)}^2 \simeq 
 h_T^{-2} \, \norm{\varphi_j}{L^2(D)}^2 \simeq 1,
\end{equation*}
where the equivalence constants depend only on the shape regularity of $\widehat\TT$, the (local) mesh-refinement rule,
and the mean field $a_0$.
\end{remark}

In order to prove Theorem~\ref{theorem:twolevel}, let us collect some auxiliary results.

\begin{lemma} 
Let $v = \sum_{\nu\in\gotP}\sum_{j=1}^n v_{j\nu} P_\nu \in V_{Y\gotP}$ with $v_{j\nu} \in \hull{\varphi_j}$.
Then,
\begin{equation} \label{eq:lemma:localization}
 \KY^{-1} \, \enorm{v}{0}^2
 \le \sum_{\nu\in\gotP}\sum_{j=1}^n \norm{a_0^{1/2}\nabla v_{j\nu}}{L^2(D)}^2
 \le  \Cloc \, \enorm{v}{0}^2,
\end{equation}
where the constant $\Cloc > 0$ depends only 
on the shape regularity of $\widehat\TT$, the (local) mesh-refinement rule,
and the mean field $a_0$.
\end{lemma}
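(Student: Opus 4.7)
My plan is to reduce the claim to a purely spatial norm equivalence by exploiting the parametric orthogonality already used in Lemma~\ref{lemma:orthogonal}, and then to prove that spatial equivalence by a standard hat-function argument (pointwise overlap for the lower bound; local scaling plus discrete Poincar\'e for the upper bound).

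\textbf{Step 1 (reduction to a spatial statement).} For each fixed $\nu\in\gotP$, write $v_\nu := \sum_{j=1}^n v_{j\nu}\in Y$. Since $v = \sum_{\nu\in\gotP} v_\nu P_\nu$ with $v_\nu\in H_0^1(D)$ and $\{P_\nu\}$ is an orthonormal basis of $L_\pi^2(\Gamma)$, equation~\eqref{eq2:lemma:orthogonal} yields
\begin{equation*}
  \enorm{v}{0}^2 = \sum_{\nu\in\gotP}\bnorm[\big]{a_0^{1/2}\nabla v_\nu}_{L^2(D)}^2.
\end{equation*}
Therefore it suffices to prove, for each $\nu$ separately, the two-sided bound
\begin{equation*}
  \KY^{-1}\,\norm{a_0^{1/2}\nabla v_\nu}{L^2(D)}^2
  \le \sum_{j=1}^n \norm{a_0^{1/2}\nabla v_{j\nu}}{L^2(D)}^2
  \le \Cloc\,\norm{a_0^{1/2}\nabla v_\nu}{L^2(D)}^2,
\end{equation*}
and sum over $\nu\in\gotP$.

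\textbf{Step 2 (lower bound).} By the overlap estimate~\eqref{K_constant}, at almost every point $x\in D$ at most $\KY$ of the gradients $\nabla v_{j\nu}(x)$ are nonzero. A pointwise Cauchy--Schwarz inequality therefore yields
\begin{equation*}
  \bigl|\nabla v_\nu(x)\bigr|^2
  = \Bigl|\textstyle\sum_{j}\nabla v_{j\nu}(x)\Bigr|^2
  \le \KY \sum_{j=1}^n |\nabla v_{j\nu}(x)|^2.
\end{equation*}
Multiplying by $a_0(x)$ and integrating over $D$ gives the left inequality in \eqref{eq:lemma:localization}.

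\textbf{Step 3 (upper bound; the main obstacle).} Write $v_{j\nu} = c_{j\nu}\varphi_j$ with $c_{j\nu}\in\R$. Because $v_\nu\in Y$ vanishes at every old vertex of $\NN_\TT$ and $\varphi_j(\vertexY_k)=\delta_{jk}$, the coefficients coincide with nodal values: $c_{j\nu} = v_\nu(\vertexY_j)$. Using $\norm{a_0^{1/2}\nabla\varphi_j}{L^2(D)}^2\simeq 1$ (as recorded in the remark following Theorem~\ref{theorem:twolevel}), we obtain
\begin{equation*}
  \sum_{j=1}^n \norm{a_0^{1/2}\nabla v_{j\nu}}{L^2(D)}^2
  = \sum_{j=1}^n |v_\nu(\vertexY_j)|^2\,\norm{a_0^{1/2}\nabla\varphi_j}{L^2(D)}^2
  \lesssim \sum_{j=1}^n |v_\nu(\vertexY_j)|^2.
\end{equation*}
Hence it remains to estimate the sum of squared nodal values by $\norm{a_0^{1/2}\nabla v_\nu}{L^2(D)}^2$. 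The key point is that on every $T\in\TT$, the function $v_\nu|_T$ belongs to a fixed finite-dimensional space (determined by the refinement pattern on the reference element) and vanishes at the three corners of $T$, which are old vertices. A standard scaling argument on the reference triangle therefore gives the local discrete Poincar\'e estimate
\begin{equation*}
  \sum_{\vertexY_j\in T} |v_\nu(\vertexY_j)|^2
  \lesssim h_T^{-2}\,\norm{v_\nu}{L^2(T)}^2
  \lesssim \norm{\nabla v_\nu}{L^2(T)}^2,
\end{equation*}
with constants depending only on the shape regularity of $\widehat\TT$ and the (local) mesh-refinement rule. Summing over $T\in\TT$, accounting for each new vertex via at most a bounded number (depending on shape regularity) of neighboring triangles, and using $a_0\ge\azeromin>0$ together with $a_0\le\azeromax<\infty$ to absorb the mean field, yields
\begin{equation*}
  \sum_{j=1}^n |v_\nu(\vertexY_j)|^2
  \lesssim \norm{\nabla v_\nu}{L^2(D)}^2
  \lesssim \norm{a_0^{1/2}\nabla v_\nu}{L^2(D)}^2,
\end{equation*}
which completes the proof. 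The main obstacle is the local norm equivalence in Step~3, which relies on the combination of a discrete Poincar\'e inequality for $\widehat X$-functions vanishing at old vertices and a scaling argument on the reference element; the factors of $h_T$ cancel precisely in two space dimensions, which is why the presentation is restricted to~2D.
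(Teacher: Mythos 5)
Your proof is correct and follows essentially the same route as the paper's: reduce to a purely spatial statement for each fixed $\nu$ via the $B_0$-orthogonality of Lemma~\ref{lemma:orthogonal}, get the lower bound from the overlap constant $\KY$ and Cauchy--Schwarz, and get the upper bound from a local finite-dimensional norm equivalence on $Y|_T$ (exploiting that these functions vanish at the old vertices) together with a scaling argument on the reference element. The only cosmetic difference is that you realize the local equivalence explicitly through nodal values and a discrete Poincar\'e inequality, whereas the paper directly invokes the equivalence of the two gradient-based quantities as norms on $Y|_T$.
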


\begin{proof}
The proof consists of three steps.

{\bf Step~1.} 
Let $T \in \TT$ and $w_j \in \hull{\varphi_j}$ for all $j=1,\dots,n$. Observe that
\begin{equation*}
 \norm[\Big]{a_0^{1/2} \nabla \sum_{j=1}^n w_j}{L^2(T)}
 \le \sum_{j=1}^n \norm{a_0^{1/2} \nabla w_j}{L^2(T)}
 \le \sqrt{\KY} \, \bigg( \sum_{j=1}^n \norm{a_0^{1/2} \nabla w_j}{L^2(T)}^2 \bigg)^{1/2}.
\end{equation*}
Hence, summing over all $T \in \TT$, we obtain
\begin{equation*}
 \norm[\Big]{a_0^{1/2}\nabla \sum_{j=1}^n w_j}{L^2(D)}^2
 \le \KY \sum_{T \in \TT} \sum_{j=1}^n \norm{a_0^{1/2} \nabla w_j}{L^2(T)}^2
 = \KY \, \sum_{j=1}^n \norm{a_0^{1/2}\nabla w_j}{L^2(D)}^2.
\end{equation*}%

{\bf Step~2.} 
To prove the converse estimate, 
let $T \in \TT$ and $w_j \in \hull{\varphi_j}$ for all $j=1,\dots,n$.
Note that $\norm{a_0^{1/2}\nabla w_Y}{L^2(T)}=0$ for $w_Y \in Y$ implies that $w_Y|_T = 0$,
since $w_Y|_T$ would be a constant with $w_Y(\xi_T) = 0$ for all $\xi_T \in \NN_\TT \cap T$. 
Thus, using the representation $w_Y = \sum_{j=1}^n w_j$ with unique $w_j \in \hull{\varphi_j}$ for all $j = 1,\dots, n$,
the quantities
\begin{equation*}
 \bigg(\sum_{j=1}^n \norm{a_0^{1/2} \nabla w_j}{L^2(T)}^2\bigg)^{1/2} 
 \quad\text{and}\quad
 \norm[\bigg]{a_0^{1/2} \nabla \sum_{j=1}^n w_j}{L^2(T)}
\end{equation*}
define two norms on $Y|_T := \set{w_Y|_T}{w_Y \in Y}$. 
Due to equivalence of norms on finite dimensional spaces, we use the standard scaling argument to obtain
\begin{equation} \label{eq:local_step2:2}
 \sum_{j=1}^n \norm[\big]{a_0^{1/2} \nabla w_j}{L^2(T)}^2
 \simeq \norm[\bigg]{a_0^{1/2} \nabla \sum_{j=1}^n w_j}{L^2(T)}^2 \quad
 \text{for all } w_j \in \hull{\varphi_j},\ j = 1,\dots, n,
\end{equation}
where the equivalence constants depend on $a_0$ and the shape regularity of $\widehat\TT$, as well as on the type of the
mesh-refinement strategy (that affects the configuration of the local space $Y|_T$).
Summing the upper bounds in \eqref{eq:local_step2:2} over all $T \in \TT$, we prove that
\begin{equation*}
 \sum_{j=1}^n \norm{a_0^{1/2}\nabla w_j}{L^2(D)}^2
 \lesssim \norm[\Big]{a_0^{1/2}\nabla \sum_{j=1}^n w_j}{L^2(D)}^2
 \quad\text{for all $w_j \in \hull{\varphi_j}$, $j=1,\dots,n$}.
\end{equation*}

{\bf Step~3.}
Using Lemma~\ref{lemma:orthogonal}, the estimates proved in Step~1 and Step~2 imply that
\begin{equation*}
 \enorm{v}{0}^2
 \reff{eq2:lemma:orthogonal}= \sum_{\nu\in\gotP} \norm[\Big]{a_0^{1/2}\nabla \sum_{j=1}^n v_{j\nu}}{L^2(D)}^2
 \simeq \sum_{\nu\in\gotP} \sum_{j=1}^n  \norm{a_0^{1/2}\nabla v_{j\nu}}{L^2(D)}^2.
\end{equation*}
This concludes the proof.
\end{proof}

\begin{lemma} \label{lemma:interpolation}
Let $\mathsf{P}_\TT$ be the nodal interpolation operator onto $\SS^1(\TT)$. 
For any function $\vXhatP = \sum_{\nu\in\gotP} \widehat{v}_{\nu} P_\nu \in \VXhatP$ 
with $\widehat v_\nu \in \Xhat$,
define $v_{X\gotP} = \sum_{\nu\in\gotP} (\mathsf{P}_\TT\, \widehat{v}_{\nu})P_\nu \in \VXP$. 
Then, we have the representation
\begin{equation} \label{eq1:lemma:interpolation}
\vXhatP - \vXP = \sum_{\nu\in\gotP} \sum_{j=1}^n v_{j\nu} P_\nu \in \VYP
 \quad\text{with } v_{j\nu} \in \hull{\varphi_j},
\end{equation}
and there holds
\begin{equation} \label{eq2:lemma:interpolation}
\enorm{\vXhatP - \vXP}{0}
 \le \Cstab \, \enorm{\vXhatP}{0},
\end{equation}
where the constant $\Cstab > 0$ depends only 
on the shape regularity of $\widehat\TT$, the (local) mesh-refinement rule,
and the mean field $a_0$.
\end{lemma}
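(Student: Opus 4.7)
The plan is to separate the two assertions. The representation \eqref{eq1:lemma:interpolation} is purely structural and follows from the definition of $Y$ together with the interpolatory property of $\mathsf{P}_\TT$; the stability bound \eqref{eq2:lemma:interpolation} then reduces, via the $B_0$-orthogonality established in Lemma~\ref{lemma:orthogonal}, to a purely spatial $H^1$-stability estimate for $\mathsf{P}_\TT$ acting on fine P1 functions. The main work sits in this last step.

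First I would establish the subspace inclusion $\widehat w - \mathsf{P}_\TT \widehat w \in Y$ for every $\widehat w \in \Xhat$. Since $\widehat w$ is continuous and $\mathsf{P}_\TT$ reproduces nodal values at every $\vertexY \in \NN_\TT$, the difference vanishes at every coarse interior vertex and hence lies in $Y$ by the definition \eqref{eq:enrichment:space}. Expanding in the basis $\basisY$ yields $\widehat w - \mathsf{P}_\TT \widehat w = \sum_{j=1}^n (\widehat w - \mathsf{P}_\TT \widehat w)(\vertexY_j) \, \varphi_j$. Applying this coefficient-wise to each $\widehat v_\nu$ and multiplying by $P_\nu$ gives \eqref{eq1:lemma:interpolation} with $v_{j\nu} := (\widehat v_\nu - \mathsf{P}_\TT \widehat v_\nu)(\vertexY_j) \, \varphi_j \in \hull{\varphi_j}$.

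Next I would invoke \eqref{eq2:lemma:orthogonal} applied to $\vXhatP - v_{X\gotP}$ and to $\vXhatP$, obtaining
\[
\enorm{\vXhatP - v_{X\gotP}}{0}^2 = \sum_{\nu \in \gotP} \norm{a_0^{1/2}\nabla(\widehat v_\nu - \mathsf{P}_\TT \widehat v_\nu)}{L^2(D)}^2
\quad \text{and} \quad
\enorm{\vXhatP}{0}^2 = \sum_{\nu \in \gotP} \norm{a_0^{1/2}\nabla \widehat v_\nu}{L^2(D)}^2.
\]
Hence \eqref{eq2:lemma:interpolation} reduces to a per-coefficient spatial estimate
\[
\norm{a_0^{1/2}\nabla(\widehat w - \mathsf{P}_\TT \widehat w)}{L^2(D)} \le \Cstab \, \norm{a_0^{1/2}\nabla \widehat w}{L^2(D)} \quad \text{for all } \widehat w \in \Xhat,
\]
with a constant independent of $\widehat w$.

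The main obstacle is precisely this last estimate, since nodal interpolation is not $H^1$-stable on general $H^1$ functions. The saving grace is that $\widehat w$ is piecewise affine on $\widehat\TT$: on each coarse element $T \in \TT$ the restriction $\widehat w|_T$ lives in a finite-dimensional space whose structure is determined by the local refinement pattern of $T$, and $(I - \mathsf{P}_\TT)$ is a linear operator on that space which annihilates constants. A standard scaling argument to a reference configuration — exploiting that the mesh-refinement rule produces only finitely many local patterns up to shape-regular affine equivalence — then yields $\norm{\nabla(\widehat w - \mathsf{P}_\TT \widehat w)}{L^2(T)} \lesssim \norm{\nabla \widehat w}{L^2(T)}$ with constants depending only on the shape regularity of $\widehat\TT$ and on the refinement rule. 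Weighting by $a_0$ via \eqref{eq2:a} at the cost of a factor $\sqrt{\azeromax/\azeromin}$ and summing over $T \in \TT$ then concludes the argument.
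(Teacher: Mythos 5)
Your proposal is correct and follows essentially the same route as the paper: identify $\widehat v_\nu - \mathsf{P}_\TT\widehat v_\nu$ as an element of $Y$ via its vanishing at the coarse interior vertices, reduce the stability bound through the $B_0$-orthogonality of Lemma~\ref{lemma:orthogonal} to a per-coefficient spatial estimate, and prove that estimate by an elementwise scaling argument on the finite-dimensional local spaces determined by the refinement pattern. The only cosmetic difference is that you bound $I-\mathsf{P}_\TT$ locally (using that it annihilates constants), whereas the paper bounds $\mathsf{P}_\TT$ itself and finishes with a triangle inequality.
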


\begin{proof}
The proof consists of two steps.

{\bf Step~1.}
Let $v_\Xhat \in \Xhat$. Then, $v_X := \mathsf{P}_\TT\, v_\Xhat \in X = \SS^1_0(\TT)$. 
Since $\Xhat = X \oplus Y$, there exist unique $w_X \in X$ and $w_Y \in Y$ such that $v_\Xhat - v_X = w_X + w_Y$. 
Observe that $(v_\Xhat - v_X)(\xi_\TT) = 0 = w_Y(\xi_\TT)$ for every vertex $\xi_\TT \in \NN_\TT$.
Hence $w_X(\xi_\TT) = 0$ for all $\xi_\TT \in \NN_\TT$ and hence $w_X=0$, i.e., 
$v_\Xhat - v_X \in Y$. 
Moreover, a scaling argument proves that
\begin{equation*}
 \norm{a_0^{1/2}\nabla(\mathsf{P}_\TT\,v_{\widehat X})}{L^2(T)}
 \lesssim \norm{a_0^{1/2}\nabla v_{\widehat X}}{L^2(T)}
 \quad \text{for all } T \in \TT,
\end{equation*}
where the hidden constant depends only on $a_0$ and the shape regularity of $\widehat\TT$, as well as
on the type of the mesh-refinement strategy (that affects the configuration of the local space $Y|_T$).
Summing this estimate over all $T \in \TT$, we see that
\begin{equation} \label{dp:0511a}
 \norm{a_0^{1/2}\nabla(\mathsf{P}_\TT\,v_{\widehat X})}{L^2(D)}
 \lesssim \norm{a_0^{1/2}\nabla v_{\widehat X}}{L^2(D)}
 \quad \text{for all } v_{\widehat X} \in \widehat X.
\end{equation}

{\bf Step~2.} 
Recall that $v_{\Xhat\gotP} - \vXP = \sum_{\nu\in\gotP} (\widehat v_{\nu} - \mathsf{P}_\TT\, \widehat v_{\nu}) P_\nu$. 
According to Step~1, $\widehat v_{\nu} - \mathsf{P}_\TT\, \widehat v_{\nu} \in Y$ and hence 
$\widehat v_{\nu} - \mathsf{P}_\TT\, \widehat v_{\nu} = \sum_{j=1}^n v_{j\nu}$ with some $v_{j\nu} \in \hull{\varphi_j}$. 
This proves~\eqref{eq1:lemma:interpolation}. 
Moreover, Lemma~\ref{lemma:orthogonal} yields that
\begin{equation*}
 \enorm{\vXP}{0}^2
 \reff{eq2:lemma:orthogonal} = \sum_{\nu\in\gotP} \norm{a_0^{1/2}\nabla(\mathsf{P}_\TT\, \widehat v_{\nu})}{L^2(D)}^2 
 \reff{dp:0511a}\lesssim \sum_{\nu\in\gotP} \norm{a_0^{1/2}\nabla \widehat v_{\nu}}{L^2(D)}^2
 \reff{eq2:lemma:orthogonal}= \enorm{v_{\Xhat\gotP}}{0}^2.
\end{equation*}
The triangle inequality then proves~\eqref{eq2:lemma:interpolation}.
\end{proof}

To state the following lemma, we need some further notation. Let $\G_{X\gotP} : V \to \VXP$ 
be the orthogonal projection onto $\VXP$ with respect to $B_0(\cdot,\cdot)$, i.e., for all $w \in V$,
\begin{equation*}
B_0(\G_{X\gotP}w , \vXP) = B_0(w , \vXP )
\quad \text{for all } \vXP \in \VXP.
\end{equation*}
Furthermore, for $\nu \in \gotP$ and $\varphi_j \in \basisY$ (i.e., for $j \in \{1,\ldots,n\}$),
let $\G_{j\nu} : V \to \hull{\varphi_j P_\nu}$ be the 
orthogonal projection onto $\hull{\varphi_j P_\nu}$ with respect to $B_0(\cdot,\cdot)$, i.e., for all $w \in V$,
\begin{equation*}
 B_0(\G_{j\nu}w , vP_\nu) = B_0(w , v P_\nu)
 \quad \text{for all } v \in \hull{\varphi_j}.
\end{equation*}

\begin{lemma}\label{lemma:twolevel}
For any $\vXhatP \in \VXhatP$, the following estimates hold
\begin{equation} \label{eq:lemma:twolevel}
 \CY^{-1} \, \enorm{\vXhatP}{0}^2
 \le \enorm{\G_{X\gotP} \vXhatP}{0}^2 + \sum_{\nu\in\gotP} \sum_{j=1}^n \enorm{\G_{j\nu} \vXhatP}{0}^2
 \le 2 \KY \, \enorm{\vXhatP}{0}^2,
\end{equation}
where the constant $\CY\ge1$ depends only on the shape regularity of $\widehat\TT$,
the (local) mesh-refinement rule, and the mean field $a_0$.
Moreover, the upper bound holds with constant $\KY$ (instead of $2\KY$), if $\G_{X\gotP} \vXhatP = 0$.
\end{lemma}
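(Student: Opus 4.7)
The plan is to handle the two inequalities in \eqref{eq:lemma:twolevel} separately. The upper bound should be a routine Cauchy--Schwarz-plus-coloring argument, while the lower bound is the real obstacle and will rely on the stable splitting of $\vXhatP \in \VXhatP$ provided by Lemmas~\ref{lemma:orthogonal} and \ref{lemma:interpolation}.

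For the upper bound, I would expand $\vXhatP = \sum_{\nu \in \gotP} \widehat v_\nu P_\nu$ with $\widehat v_\nu \in \Xhat$. Using the $L^2_\pi$-orthonormality of $(P_\nu)$ together with Lemma~\ref{lemma:orthogonal}, the projection onto $\hull{\varphi_j P_\nu}$ admits the closed form
\[
  \enorm{\G_{j\nu}\vXhatP}{0}^2
  = \frac{\bigl(\int_D a_0\, \nabla \widehat v_\nu \cdot \nabla \varphi_j \, dx\bigr)^2}{\norm{a_0^{1/2}\nabla \varphi_j}{L^2(D)}^2}.
\]
A Cauchy--Schwarz step on the numerator bounds this by $\int_{\supp(\varphi_j)} a_0 |\nabla \widehat v_\nu|^2 \, dx$. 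Summing over $j$ produces a factor $\KY$ through the finite-overlap property \eqref{K_constant}, and summing over $\nu$ and invoking \eqref{eq2:lemma:orthogonal} gives $\sum_{\nu,j} \enorm{\G_{j\nu}\vXhatP}{0}^2 \le \KY \enorm{\vXhatP}{0}^2$. Since $\G_{X\gotP}$ is a $B_0$-orthogonal projection, $\enorm{\G_{X\gotP}\vXhatP}{0}^2 \le \enorm{\vXhatP}{0}^2$; adding yields $(1+\KY)\enorm{\vXhatP}{0}^2 \le 2\KY \enorm{\vXhatP}{0}^2$ (using $\KY \ge 1$), and the sharper constant in the case $\G_{X\gotP}\vXhatP = 0$ is immediate since the first contribution vanishes.

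For the lower bound, I would begin by invoking Lemma~\ref{lemma:interpolation} to decompose $\vXhatP = \vXP + (\vXhatP - \vXP)$ with $\vXP = \sum_{\nu\in\gotP}(\mathsf{P}_\TT \widehat v_\nu) P_\nu \in \VXP$ and $\vXhatP - \vXP = \sum_{\nu,j} v_{j\nu} P_\nu \in \VYP$, where $v_{j\nu} \in \hull{\varphi_j}$. The key observation is that, by definition of the $B_0$-orthogonal projections $\G_{X\gotP}$ and $\G_{j\nu}$,
\[
 \enorm{\vXhatP}{0}^2
 = B_0(\vXhatP, \vXP) + \sum_{\nu,j} B_0(\vXhatP, v_{j\nu} P_\nu)
 = B_0(\G_{X\gotP}\vXhatP, \vXP) + \sum_{\nu,j} B_0(\G_{j\nu}\vXhatP, v_{j\nu} P_\nu).
\]
A double Cauchy--Schwarz (first within $B_0$, then discrete) then produces
\[
 \enorm{\vXhatP}{0}^2
 \le \Bigl(\enorm{\G_{X\gotP}\vXhatP}{0}^2 + \sum_{\nu,j}\enorm{\G_{j\nu}\vXhatP}{0}^2\Bigr)^{1/2}
     \Bigl(\enorm{\vXP}{0}^2 + \sum_{\nu,j} \norm{a_0^{1/2}\nabla v_{j\nu}}{L^2(D)}^2\Bigr)^{1/2},
\]
where I used $\enorm{v_{j\nu} P_\nu}{0} = \norm{a_0^{1/2}\nabla v_{j\nu}}{L^2(D)}$ by orthonormality of $(P_\nu)$.

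The crux is then to dominate the second factor by $\enorm{\vXhatP}{0}$ up to a constant depending only on the geometry and $a_0$: the stability estimate \eqref{eq2:lemma:interpolation} with the triangle inequality controls $\enorm{\vXP}{0} \le (1+\Cstab)\enorm{\vXhatP}{0}$, while the upper bound in \eqref{eq:lemma:localization} applied to $\vXhatP - \vXP \in \VYP$ controls the nodal sum by $\Cloc\Cstab^2 \enorm{\vXhatP}{0}^2$. The main technical difficulty here is that $\CY$ must be independent of $\gotP$, which is exactly what the product-structure lemmas are designed to guarantee. Dividing through by $\enorm{\vXhatP}{0}$ and squaring delivers the lower bound with $\CY = (1+\Cstab)^2 + \Cloc\Cstab^2$.
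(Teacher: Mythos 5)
Your proof is correct, and the harder half — the lower bound — follows the paper's argument essentially verbatim: the interpolation-based splitting $\vXhatP = \vXP + \sum_{\nu,j} v_{j\nu}P_\nu$ from Lemma~\ref{lemma:interpolation}, the replacement of $\vXhatP$ by its projections inside $B_0$, a double Cauchy--Schwarz, and the two stability bounds \eqref{eq2:lemma:interpolation} and \eqref{eq:lemma:localization}, arriving at the identical constant $\CY=(1+\Cstab)^2+\Cloc\Cstab^2$. Where you genuinely diverge is the upper bound. The paper tests the sum $\G_{X\gotP}\vXhatP+\sum_{\nu,j}\G_{j\nu}\vXhatP$ against $\vXhatP$ via $B_0(\cdot,\cdot)$, splits off the $\G_{X\gotP}$ term at the cost of a factor $\sqrt{2}$, and then invokes the lower inequality in the localization estimate \eqref{eq:lemma:localization} to absorb the cross terms, which is how the constant $2\KY$ arises. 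You instead write each rank-one projection in closed form, $\enorm{\G_{j\nu}\vXhatP}{0}^2 = \bigl(\int_D a_0\,\nabla\widehat v_\nu\cdot\nabla\varphi_j\,dx\bigr)^2/\norm{a_0^{1/2}\nabla\varphi_j}{L^2(D)}^2$, apply Cauchy--Schwarz locally on $\supp(\varphi_j)$, and sum using the overlap property \eqref{K_constant} directly; this gives $\sum_{\nu,j}\enorm{\G_{j\nu}\vXhatP}{0}^2\le\KY\enorm{\vXhatP}{0}^2$ and, with $\enorm{\G_{X\gotP}\vXhatP}{0}\le\enorm{\vXhatP}{0}$, the slightly sharper total $(1+\KY)\le 2\KY$. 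Both routes rest on the same finite-overlap bound, but yours bypasses the localization lemma for this direction, is more elementary, and makes the special case $\G_{X\gotP}\vXhatP=0$ (constant $\KY$) entirely transparent.
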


\begin{proof}
The proof consists of two steps.

{\bf Step~1.}
Let us prove the lower bound in~\eqref{eq:lemma:twolevel}. 
To this end, let $\vXhatP \in \VXhatP$ and choose $\vXP \in \VXP$ as in Lemma~\ref{lemma:interpolation}. 
Then we have that
\begin{eqnarray*}
 \enorm{\vXhatP}{0}^2
 &\hspace{-9pt}{\reff{eq1:lemma:interpolation}=}\hspace{-11pt}& 
 B_0(\vXhatP , \vXP) + \sum_{\nu\in\gotP}\sum_{j=1}^n B_0(\vXhatP , v_{j\nu} P_\nu)
 \\
 &\hspace{-9pt}{=}\hspace{-11pt}& 
 B_0(\G_{X\gotP}\vXhatP , \vXP) + \sum_{\nu\in\gotP}\sum_{j=1}^n B_0(\G_{j\nu}\vXhatP , v_{j\nu} P_\nu)
 \\
 &\hspace{-9pt}{\le}\hspace{-11pt}& 
 \Big( \enorm{\G_{X\gotP}\vXhatP}{0}^2 + \sum_{\nu\in\gotP}\sum_{j=1}^n \enorm{\G_{j\nu}\vXhatP}{0}^2 \Big)^{1/2}
 \Big( \enorm{\vXP}{0}^2 + \sum_{\nu\in\gotP}\sum_{j=1}^n \enorm{v_{j\nu} P_\nu}{0}^2 \Big)^{1/2}.
\end{eqnarray*}
First, note that 
\begin{equation*}
 \enorm{\vXP}{0}
 \le \enorm{\vXhatP}{0}
 + \enorm{\vXhatP - \vXP}{0}
 \reff{eq2:lemma:interpolation}
 \le (1 + \Cstab) \, \enorm{\vXhatP}{0}.
\end{equation*}
Second, we use the upper bound in~\eqref{eq:lemma:localization} to obtain that
\begin{align*}
 &\sum_{\nu\in\gotP}\sum_{j=1}^n \enorm{v_{j\nu} P_\nu}{0}^2
  \reff{eq2:lemma:orthogonal}= \sum_{\nu\in\gotP}\sum_{j=1}^n \norm{a_0^{1/2}\nabla v_{j\nu}}{L^2(D)}^2
  \\&\qquad
  \reff{eq:lemma:localization}\le
  \Cloc \, \enorm[\Big]{\sum_{\nu\in\gotP}\sum_{j=1}^n v_{j\nu} P_\nu}{0}^2
  \reff{eq1:lemma:interpolation}= \Cloc \, \enorm{\vXhatP - \vXP}{0}^2
  \reff{eq2:lemma:interpolation}\le \Cloc \Cstab^2 \, \enorm{\vXhatP}{0}^2.
\end{align*}
Combining the foregoing three estimates, we conclude that
\begin{equation*}
 \enorm{\vXhatP}{0}^2
 \le \CY \, \Big( \enorm{\G_{X\gotP}\vXhatP}{0}^2 +
 \sum_{\nu\in\gotP}\sum_{j=1}^n \enorm{\G_{j\nu}\vXhatP}{0}^2 \Big),
\end{equation*}
where $\CY = (1+\Cstab)^2 \, + \, \Cloc \Cstab^2 \ge 1$.

{\bf Step~2.}
Let us now prove the upper bound in~\eqref{eq:lemma:twolevel}.
One has
\begin{align*}
 &\enorm{\G_{X\gotP}\vXhatP}{0}^2 + \sum_{\nu\in\gotP}\sum_{j=1}^n \enorm{\G_{j\nu}\vXhatP}{0}^2
 = B_0(\G_{X\gotP}\vXhatP , \vXhatP) 
 + \sum_{\nu\in\gotP}\sum_{j=1}^n B_0(\G_{j\nu}\vXhatP , \vXhatP)
 \\&\quad
 = B_0\Big(\G_{X\gotP}\vXhatP + \sum_{\nu\in\gotP}\sum_{j=1}^n \G_{j\nu}\vXhatP , \vXhatP\Big)
 \le \enorm[\Big]{ \G_{X\gotP}\vXhatP + \sum_{\nu\in\gotP}\sum_{j=1}^n \G_{j\nu}\vXhatP }{0}
 \,\enorm{\vXhatP}{0}.
\end{align*}
First, note that
\begin{align*}
 \enorm[\Big]{ \G_{X\gotP}\vXhatP + \sum_{\nu\in\gotP}\sum_{j=1}^n \G_{j\nu}\vXhatP }{0}
 \le \sqrt{2} \, \Big( \enorm{\G_{X\gotP}\vXhatP}{0}^2 + \enorm[\Big]{\sum_{\nu\in\gotP}\sum_{j=1}^n \G_{j\nu}\vXhatP }{0}^2 \Big)^{1/2}.
\end{align*}
Second, let $\G_{j\nu}\vXhatP = v_{j\nu} P_\nu$. Then, $v_{j\nu} \in \hull{\varphi_j}$ and
\begin{align*}
 \enorm[\Big]{\sum_{\nu\in\gotP}\sum_{j=1}^n \G_{j\nu}\vXhatP }{0}^2
 = \enorm[\Big]{\sum_{\nu\in\gotP}\sum_{j=1}^n v_{j\nu} P_\nu }{0}^2
 &\reff{eq:lemma:localization}\le
 \KY
 \sum_{\nu\in\gotP}\sum_{j=1}^n \norm{a_0^{1/2}\nabla v_{j\nu}}{L^2(D)}^2
 \\&
 \reff{eq2:lemma:orthogonal}= \KY \sum_{\nu\in\gotP}\sum_{j=1}^n \enorm{\G_{j\nu}\vXhatP }{0}^2.
\end{align*}
Combining the foregoing three inequalities, we obtain the estimate
\begin{equation*}
  \Big( \enorm{\G_{X\gotP}\vXhatP}{0}^2 + \sum_{\nu\in\gotP}\sum_{j=1}^n \enorm{\G_{j\nu}\vXhatP}{0}^2 \Big)^{1/2}
 \le \sqrt{2\KY} \, \enorm{\vXhatP}{0},
\end{equation*}
which yields the desired upper bound in~\eqref{eq:lemma:twolevel}.
\end{proof}

\begin{proof}[Proof of Theorem~\ref{theorem:twolevel}]
The proof consists of two steps.

{\bf Step~1.} 
Recall the definition of $e_{\Xhat\gotP} \in \VXhatP$ given in~\eqref{eq:e_hatXP}.
Since $V_{X\gotP} \subset V_{\Xhat\gotP}$, we deduce from~\eqref{eq:discreteform} and~\eqref{eq:e_hatXP} that
\begin{equation*}
 B_0(e_{\Xhat\gotP} , \vXP) = 0
 \quad \text{for all } \vXP \in \VXP.
\end{equation*}
Hence, $\G_{X\gotP} e_{\Xhat\gotP} = 0$ and therefore Lemma~\ref{lemma:twolevel} proves that
\begin{equation*}
 \CY^{-1} \, \enorm{e_{\Xhat\gotP}}{0}^2
 \le \sum_{\nu\in\gotP}\sum_{j=1}^n \enorm{\G_{j\nu}e_{\Xhat\gotP}}{0}^2
  \le \KY \, \enorm{e_{\Xhat\gotP}}{0}^2.
\end{equation*}
Since $C_Y, \KY \ge 1$, we use decomposition~\eqref{eq0:final} to obtain
\begin{equation} \label{dp:0511c}
\CY^{-1} \, \enorm{\widehat e_{X\gotP}}{0}^2
\le \sum_{\nu\in\gotP} \sum_{j=1}^n \enorm{\G_{j\nu}e_{\Xhat\gotP}}{0}^2 + \sum_{\nu \in \gotQ} \enorm{e_{X\gotQ}^{(\nu)}}{0}^2
\le \KY \, \enorm{\widehat e_{X\gotP}}{0}^2. 
\end{equation}

{\bf Step~2.}
The orthogonal projection onto the one-dimensional space $\hull{\varphi_j P_\nu}$ satisfies
\begin{equation*}
 \G_{j\nu} v = \frac{B_0(v,\varphi_j P_\nu)}{\enorm{\varphi_j P_\nu}{0}^2}\,\varphi_j P_\nu
 \quad \text{for all } v \in V.
\end{equation*}
Hence, 
\begin{equation*}
 \enorm{\G_{j\nu}e_{\widehat X\gotP}}{0}
 \reff{eq:e_hatXP} = \frac{|F(\varphi_j P_\nu) - B(\uXP,\varphi_j P_\nu)|}{\enorm{\varphi_j P_\nu}{0}}
 \reff{eq2:lemma:orthogonal}= \frac{|F(\varphi_j P_\nu) - B(\uXP,\varphi_j P_\nu)|}{\norm{a_0^{1/2} \nabla \varphi_j}{L^2(D)}}.
\end{equation*}
Using the definition of $\muXP$ given in~\eqref{def:mu}, estimate~\eqref{dp:0511c} thus implies that 
\begin{equation*}
 \frac{\lambda}{\KY} \, \muXP^2
 \reff{dp:0511c}\le \lambda \, \enorm{\widehat e_{X\gotP}}{0}^2
 \reff{est:hat-e_XP}\le \bnorm{\uXPhat - \uXP}^2 
 \reff{est:hat-e_XP}\le \Lambda \, \enorm{\widehat e_{X\gotP}}{0}^2
 \reff{dp:0511c}\le \Lambda \, \CY \, \muXP^2.
\end{equation*}
This proves~\eqref{eq1:theorem:twolevel} with $\Cthm = \CY$.
Estimate~\eqref{eq2:theorem:twolevel} then immediately follows from~\eqref{eq:hh2:efficient} and~\eqref{eq:hh2:reliable}.
\end{proof}

\section{Goal-oriented adaptivity for parametric problems} \label{sec:goaloriented:adaptivity}

\subsection{Goal-oriented error estimation in the parametric setting} 

First, let us formulate the abstract result on goal-oriented error estimation (see \S\ref{subsec:abstract:goaloriented:setting})
in the context of the sGFEM discretization for the parametric model problem~\eqref{eq:strongform}.
Let $u \in V = L^2_\pi(\Gamma; H^1_0(D)) $ be the unique \emph{primal} solution satisfying~\eqref{eq:weakform}.
Then, given a function $g \in H^{-1}(D)$, let us consider the quantity of interest $Q(u(\cdot,\y)) := \int_D g(x) u(x,\y) \, dx$.
Then, introducing the goal functional $G \in V'$ defined by
\begin{equation} \label{parametric:goal}
      G(v) := \int_\Gamma Q(v(\cdot,\y)) \, \dpiy =
      \int_\Gamma \int_D g(x)v(x,\y) \, dx \, \dpiy\quad \text{for all $v \in V$},
\end{equation}
we are interested in approximating $G(u)$---the mean value of the quantity of interest.

Let $z \in V$ be the unique \emph{dual} solution satisfying
\begin{equation} \label{parametric:dual:problem}
      B(v,z) = G(v) \quad \text{for all $v \in V$}.
\end{equation}
Considering the same finite-dimensional subspace $\VXP \subset V$
as used for the (primal) Galerkin approximation $\uXP \in \VXP$ (see~\eqref{discrete:space:VXP} and~\eqref{eq:discreteform}),
let $\zXP \in \VXP$ be the dual Galerkin solution  satisfying
\begin{equation} \label{parametric:dual:problem:discrete}
      B(\vXP,\zXP) = G(\vXP) \quad \text{for all $\vXP \in \VXP$.}
\end{equation}
Recall that $\muXP$ defined by \eqref{def:mu} provides a reliable and efficient estimate
for the energy error in the Galerkin approximation of the primal solution $u$.
Let us denote by $\zetaXP$ the corresponding estimate for the energy error
in the Galerkin approximation of the dual solution $z$
(recall that the bilinear form $B(\cdot,\cdot)$ is symmetric).
It follows from Theorem~\ref{theorem:twolevel} that
\[
  \bnorm{u-\uXP} \lesssim \muXP 
  \quad\text{and}\quad
  \bnorm{z-\zXP} \lesssim \zetaXP.
\]
From the abstract result in \S\ref{subsec:abstract:goaloriented:setting}
(see~\eqref{eq:errorInequality}--\eqref{goal:error:reliability}), we therefore conclude that
the error in approximating~$G(u)$
can be controlled by the product of the two error estimates~$\muXP$ and~$\zetaXP$, i.e., 
$\lvert G(u)-G(\uXP) \rvert  \lesssim \muXP\,\zetaXP$.

Let us now discuss some important ingredients of the goal-oriented adaptive algorithm.

\begin{figure}[b!]
\begin{minipage}{.19\textwidth}
\centering%
\includegraphics[width=\textwidth]{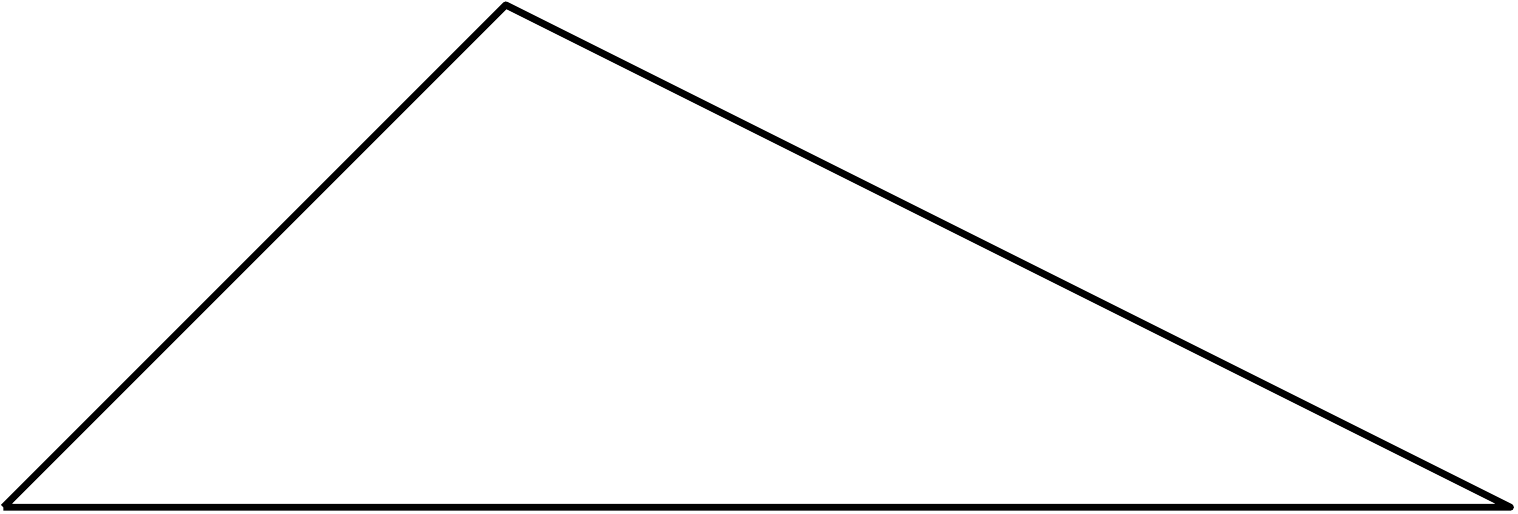}%
\\[.5mm]%
\includegraphics[width=\textwidth]{figures/refinement/bisec0.pdf}%
\end{minipage}
\begin{minipage}{.19\textwidth}
\centering%
\includegraphics[width=\textwidth]{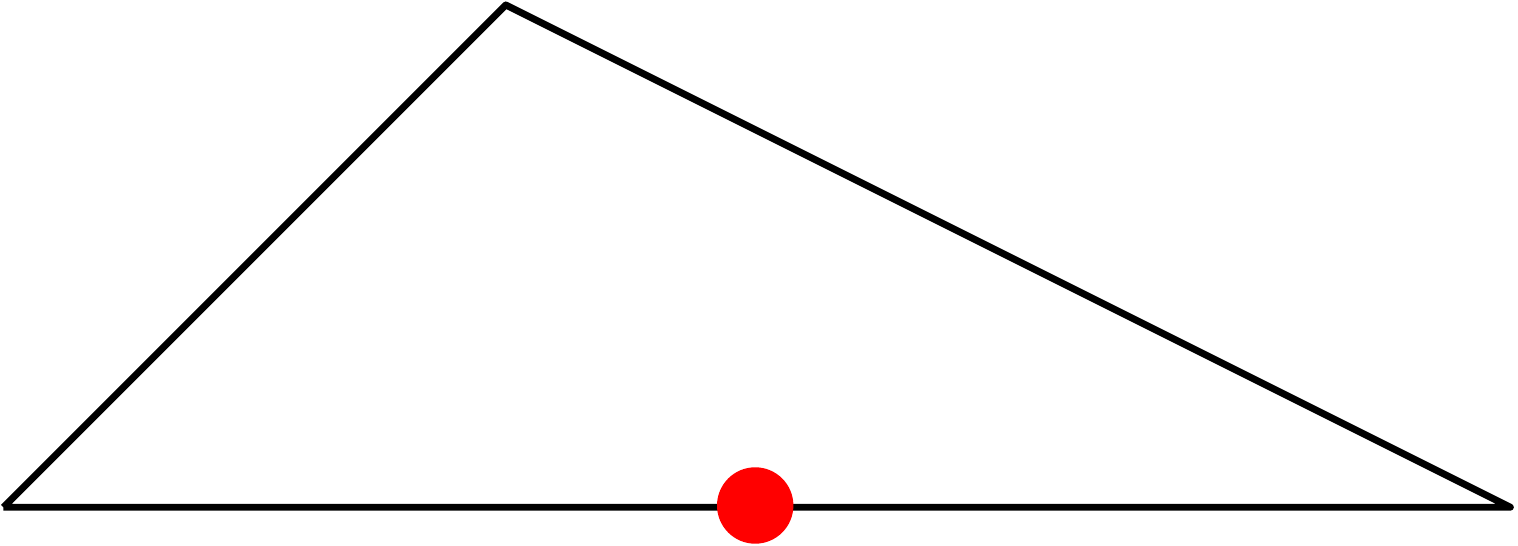}%
\newline%
\includegraphics[width=\textwidth]{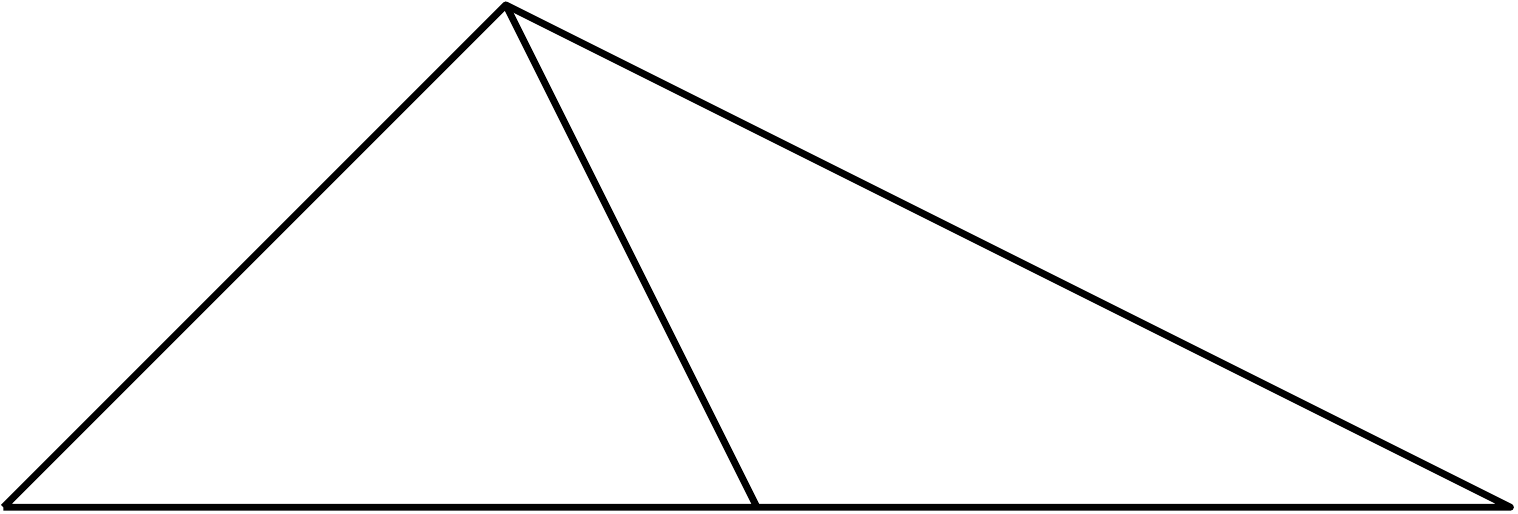}%
\end{minipage}
\begin{minipage}{.19\textwidth}
\centering%
\includegraphics[width=\textwidth]{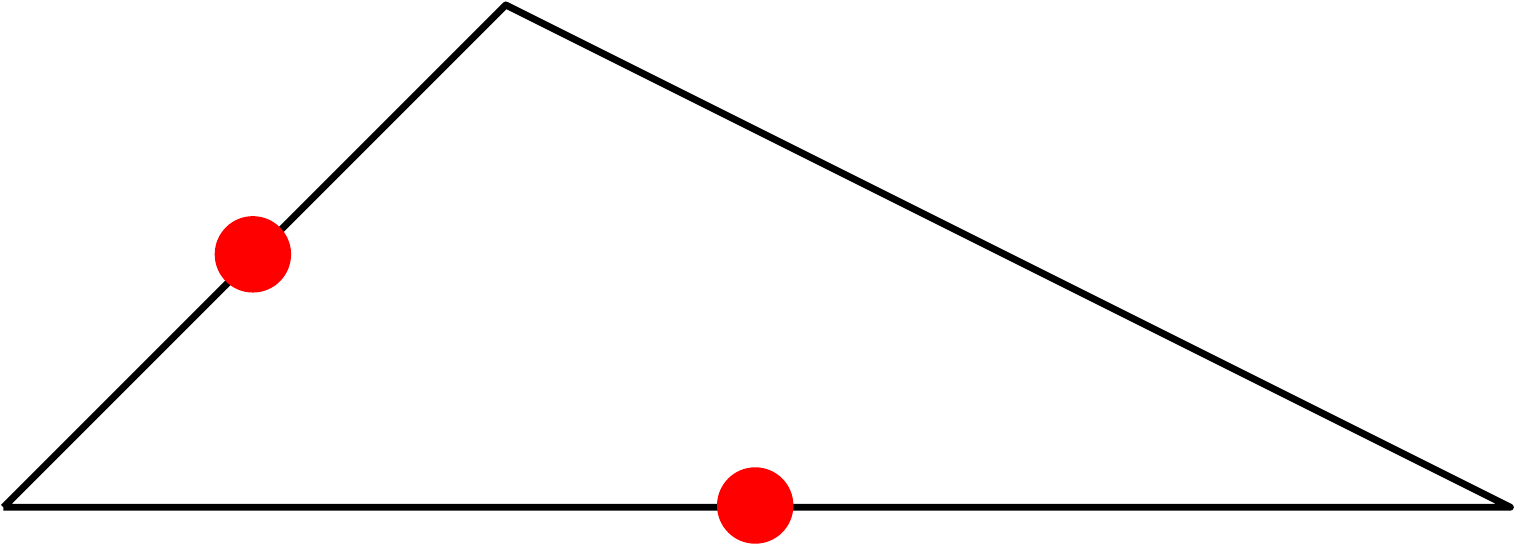}%
\newline%
\includegraphics[width=\textwidth]{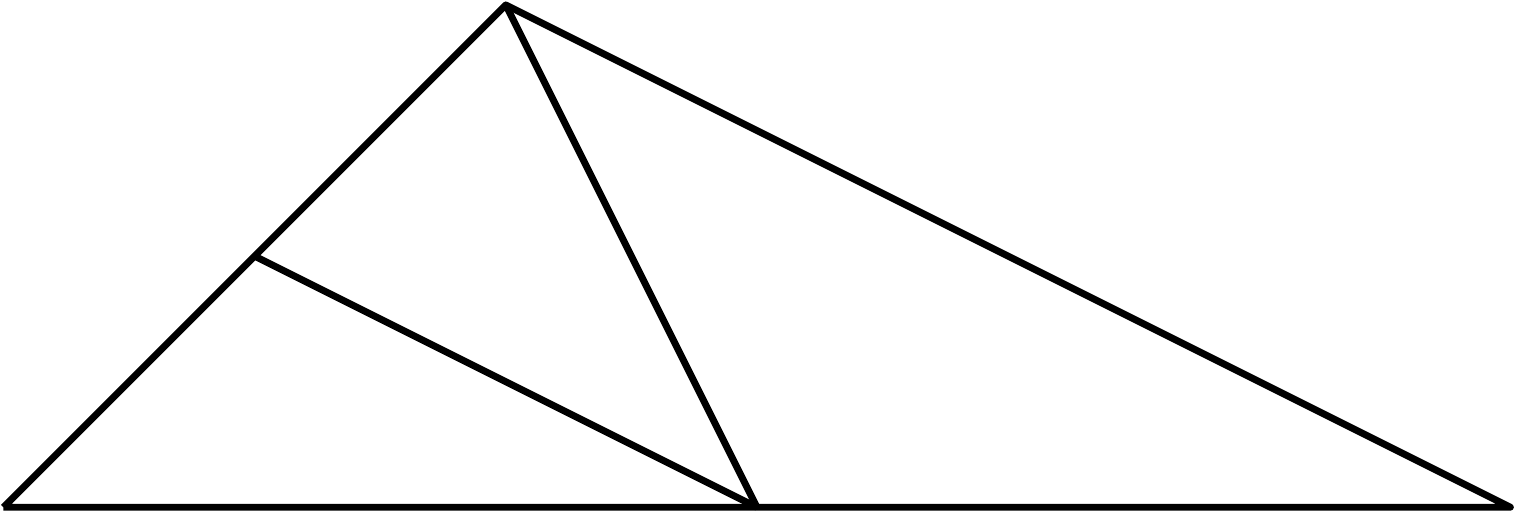}%
\end{minipage}
\begin{minipage}{.19\textwidth}
\centering%
\includegraphics[width=\textwidth]{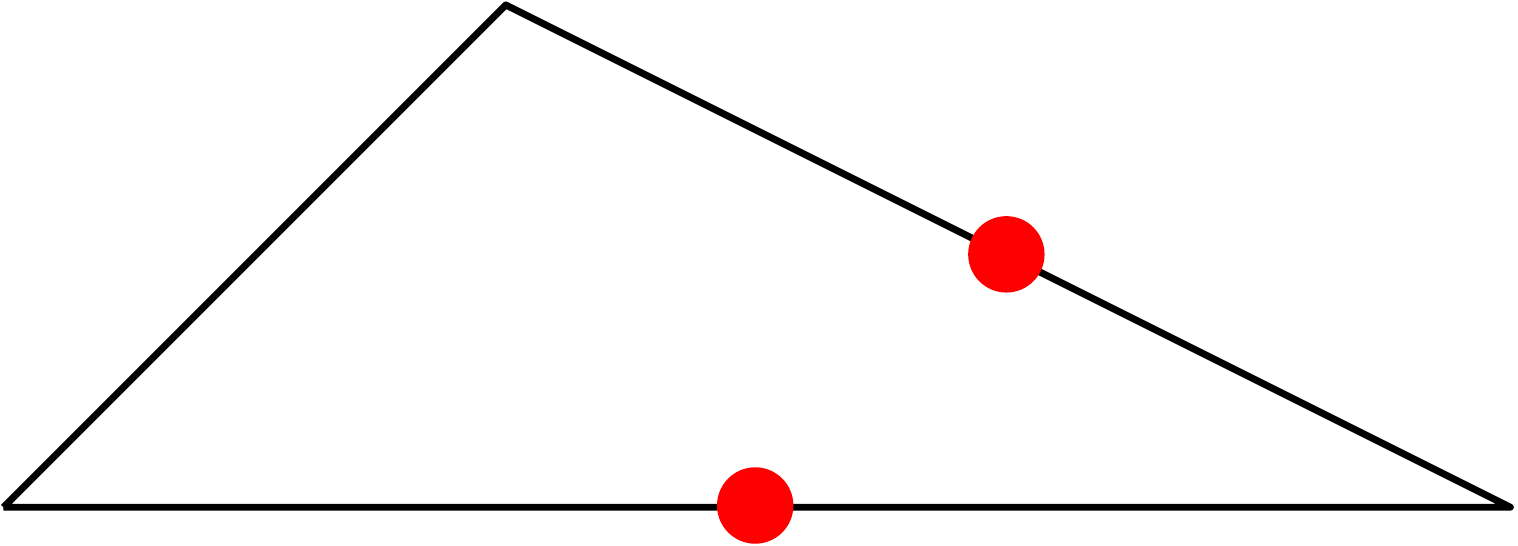}%
\newline%
\includegraphics[width=\textwidth]{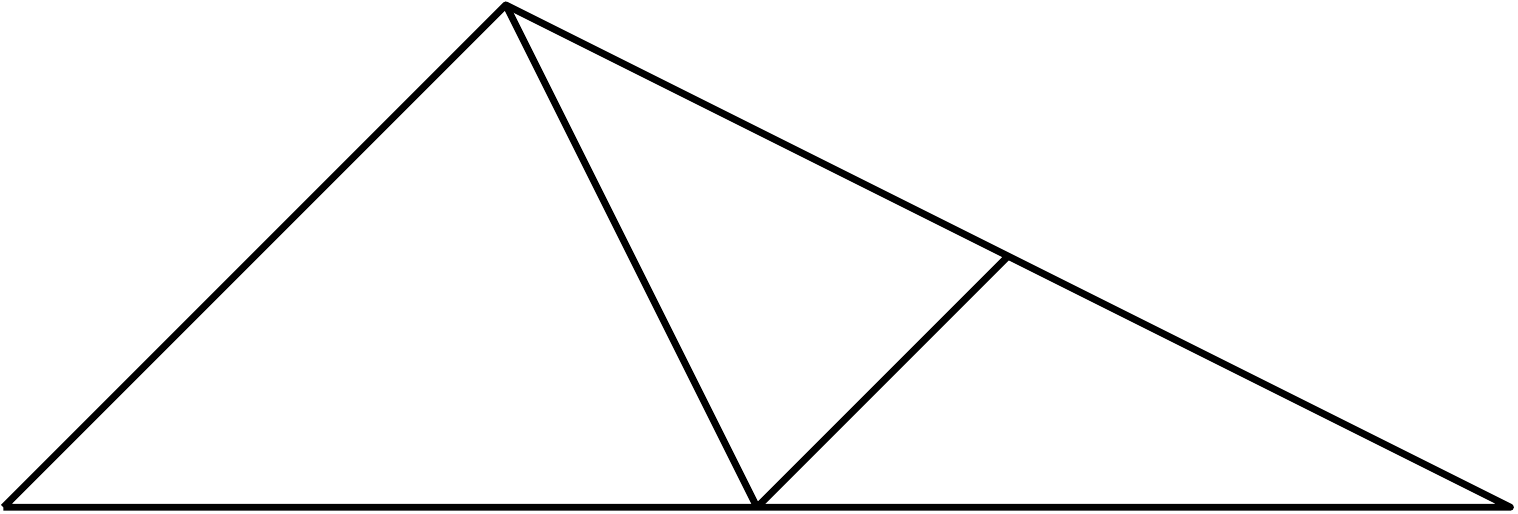}%
\end{minipage}
\begin{minipage}{.19\textwidth}
\centering%
\includegraphics[width=\textwidth]{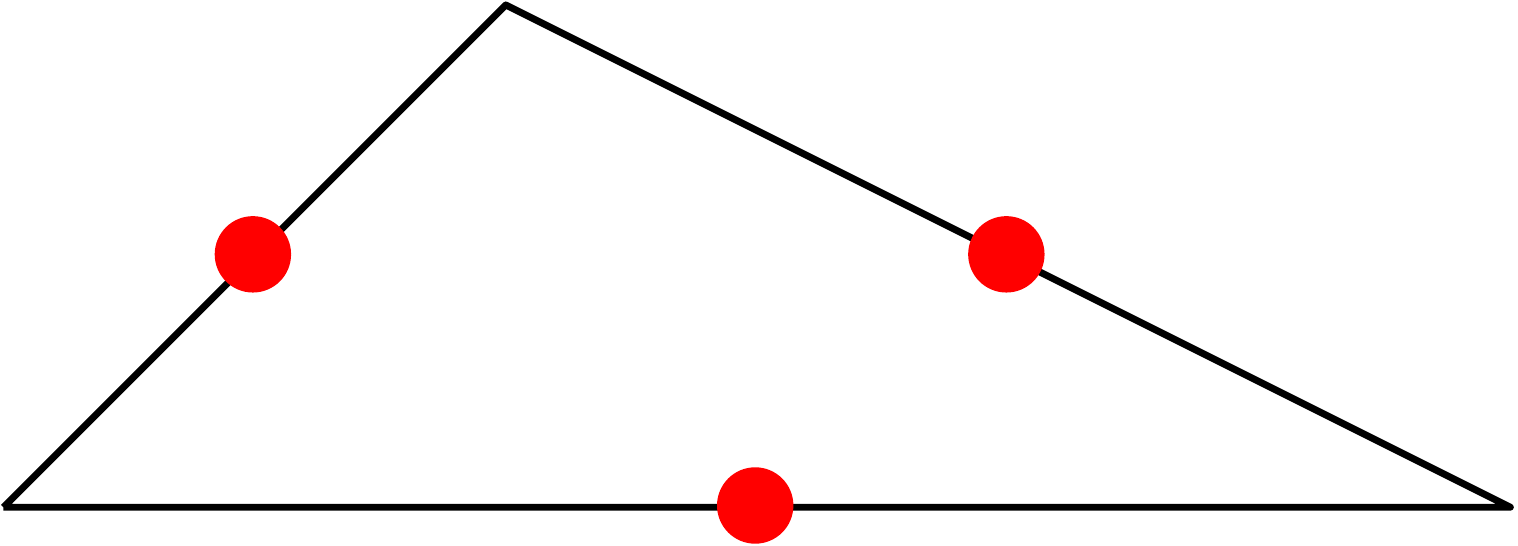}%
\newline%
\includegraphics[width=\textwidth]{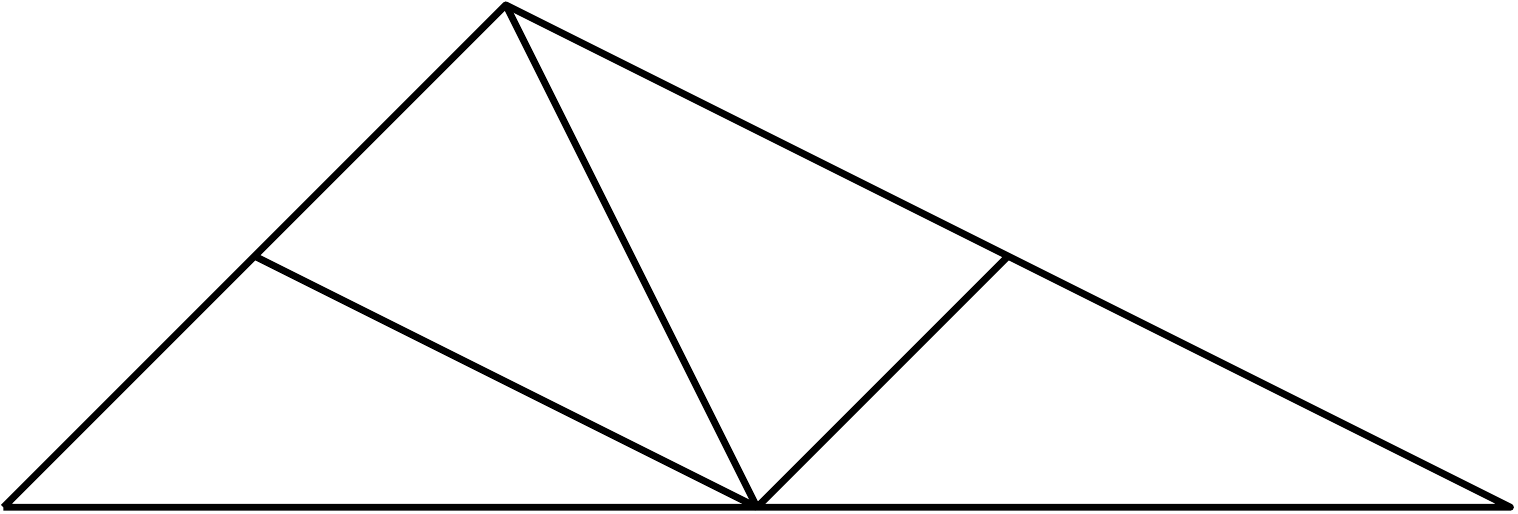}%
\end{minipage}
\caption{Refinement pattern of 2D longest edge bisection:
Coarse-mesh triangles (top row) are refined (bottom row)
by bisection of the edges (at least the longest edge) that
are marked for refinement (top row). The new nodes are the edge midpoints.}
\label{fig:bisection}
\end{figure}

\subsection{Longest edge bisection} 

In what follows, we restrict ourselves to the mesh-refine\-ments performed by 2D longest edge bisection;
see Figure~\ref{fig:bisection}.
Let $\EE_\TT$ be the set of edges of $\TT$. For any set $\MM \subseteq \EE_\TT$, 
the call $\widetilde\TT := \Refine(\TT, \MM)$ returns the coarsest conforming refinement of $\TT$ such that 
all edges $E \in \MM$ are bisected.
In particular, we obtain the uniform refinement $\widehat\TT = \Refine(\TT,\EE_\TT)$ from $\TT$
by three bisections per each element $T \in \TT$.

Let $\mathcal{E}^{\rm{int}}_{\TT} \subset \mathcal{E}_{\TT}$ be the set of \emph{interior} edges,
i.e., $E \in \mathcal{E}^{\rm{int}}_{\TT}$ if and only if there exist two elements $T,T' \in \TT$ such that $E = T\cap T'$.
Then, the above choice of $\widehat\TT$ guarantees
the existence of a one-to-one map between the set $\NN_{\widehat\TT} \backslash \NN_\TT$
of new interior vertices and the set $\mathcal{E}^{\rm{int}}_{\TT}$ of interior edges.
In other words, for any $E \in \mathcal{E}^{\rm{int}}_{\TT}$, there exists a unique $j \in \{1,\ldots,n\}$
such that $\vertexY_j \in \NN_{\widehat\TT} \backslash \NN_\TT$ is the midpoint of $E$.
In this case, we denote by $\varphi_E$ the corresponding hat function (over $\widehat\TT$),
i.e., $\varphi_E := \varphi_j \in \basisY$, where $\varphi_j$ ($j = 1,\ldots,n$)
are defined in \S\ref{sec:two:level:errorestimator}.

\subsection{Local error indicators in the energy norm} 

Consider the primal Galerkin solution $\uXP \in \VXP$ and the associated energy error estimate $\muXP$ given by~\eqref{def:mu}.
We write~\eqref{def:mu} as follows:
\begin{equation} \label{mu:global:contributions}
      \muXP^2 = \mu_{Y\gotP}^2 + \mu_{X\gotQ}^2 = 
      \sum_{E \in \mathcal{E}^{\rm{int}}_{\TT}} \mu^2_{Y\gotP}(E) +  \sum_{\nu \in \gotQ} \mu^2_{X\gotQ}(\nu)
\end{equation}
with the local contributions
\begin{equation} \label{eq:indicators}
      \mu^2_{Y\gotP}(E) :=
      \sum_{\nu \in \gotP} \frac{|F(\varphi_E P_\nu) - B(\uXP,\varphi_E P_\nu)|^2}{\norm{a_0^{1/2}\nabla\varphi_E}{L^2(D)}^2}\quad
      \hbox{and}
      \quad
      \mu_{X\gotQ}(\nu) := \enorm{e_{X\gotQ}^{(\nu)}}{0}
\end{equation}
 (recall that, for each $\nu \in \gotQ$, $e_{X\gotQ}^{(\nu)} \in X \otimes \hull{P_\nu}$ is defined by~\eqref{eq:e_Xnu}).
In explicit terms, $\mu_{Y\gotP}$ (resp., $\mu_{X\gotQ}$) is the global two-level spatial error estimate
(resp., the global parametric error estimate),
$\mu_{Y\gotP}(E)$ denotes the local (spatial) error indicator associated with the edge $E \in \mathcal{E}^{\rm{int}}_{\TT}$,
and $\mu_{X\gotQ}(\nu)$ denotes the individual (parametric) error indicator associated with the index $\nu \in \gotQ$.

A decomposition similar to~\eqref{mu:global:contributions} holds for the error estimate $\zetaXP$
associated with the dual Galerkin solution $\zXP$;
in this case, we denote the corresponding spatial and parametric estimates by $\zeta_{Y\gotP}$ and $\zeta_{X\gotQ}$, respectively;
the definitions of the local contributions $\zeta_{Y\gotP}(E)$ and $\zeta_{X\gotQ}(\nu)$ are analogous to those
of $\mu_{Y\gotP}(E)$ and $\mu_{X\gotQ}(\nu)$ in~\eqref{eq:indicators}.

\subsection{Marking strategy} \label{sec:markstrat}

In order to compute a more accurate Galerkin solution (and, hence, to reduce the error in the quantity of interest),
an enriched approximation space needs to be constructed.
In the algorithm presented below, the approximation space is enriched at
each iteration of the adaptive loop either
by performing local refinement of the underlying triangulation $\TT$ or
by adding new indices into the index set $\gotP$.
In the former case, the refinement is guided by the set $\MM \subseteq \mathcal{E}^{\rm{int}}_{\TT}$ of marked edges,
whereas in the latter case, a set $\gotM \subseteq \gotQ$ of marked indices is added to the index set $\gotP$.

Let us focus on the case of marking edges of triangulation.
We start by using the D\"orfler marking criterion \cite{d96} for the sets 
$\{ \mu_{Y\gotP}(E) \,:\, E \in \mathcal{E}^{\rm{int}}_{\TT} \}$ and
$\{ \zeta_{Y\gotP}(E) \,:\, E \in \mathcal{E}^{\rm{int}}_{\TT} \}$
of (spatial) error indicators (see~\eqref{eq:indicators})
in order to identify two sets of marked edges, independently for the primal and for the dual Galerkin solutions.
Specifically, for a given $0 < \thetaX \leq 1$, we define
\begin{equation*} 
\MM^u := \Doerfler\big( \{ \mu_{Y\gotP}(E) \,:\, E \in \mathcal{E}^{\rm{int}}_{\TT}\}, \thetaX \big) 
\text{ and }
\MM^z := \Doerfler\big( \{ \zeta_{Y\gotP}(E) \,:\, E \in \mathcal{E}^{\rm{int}}_{\TT}\}, \thetaX \big) 
\end{equation*}
as the subsets of $\mathcal{E}^{\rm{int}}_{\TT}$ of minimal cardinality (possibly up to a fixed multiplicative factor) 
such that
\begin{equation*} 
       \thetaX \sum_{E\in\mathcal{E}^{\rm{int}}_{\TT}} \mu^2_{Y\gotP}(E) \leq \sum_{E \in \MM^u} \mu^2_{Y\gotP}(E)
       \quad \text{and} \quad
       \thetaX \sum_{E\in\mathcal{E}^{\rm{int}}_{\TT}} \zeta^2_{Y\gotP}(E) \leq \sum_{E \in \MM^z} \zeta^2_{Y\gotP}(E),
\end{equation*}
respectively.

There exist several strategies of ``combining'' the two sets $\MM^u$ and $\MM^z$ into a single marking set
that is used for refinement in the goal-oriented adaptive algorithm; see~\cite{ms09,bet11,hp16,fpv16}.
For goal-oriented adaptivity in the deterministic setting, \cite{fpv16} proves that the strategies of~\cite{ms09,bet11,fpv16}
lead to convergence with optimal algebraic rates, while the strategy from \cite{hp16} might not.
The marking strategy proposed in~\cite{fpv16} is a modification of the strategy in~\cite{ms09}.
It has been empirically proved that the strategy in~\cite{fpv16} is more effective than the original strategy in~\cite{ms09}
with respect to the overall computational cost.
We employ the following marking strategy adopted from~\cite{fpv16}.
Comparing the cardinality of $\MM^u$ and that of $\MM^z$ we define
\begin{align}
        \MM_\star := \MM^u \quad \hbox{and} \quad \MM^\star := \MM^z \qquad &
        \hbox{if $\#\MM^u \le \#\MM^z$},
        \nonumber
        \\ 
        \MM_\star := \MM^z \quad \hbox{and} \quad \MM^\star := \MM^u \qquad &
        \hbox{otherwise}.
        \nonumber
\end{align}
The set $\MM \subseteq \MM_\star \cup \MM^\star \subseteq \mathcal{E}^{\rm{int}}_{\TT}$ 
is then defined as the union of $\MM_\star$ and 
those $\#\MM_\star$ edges of $\MM^\star$ that have the largest error indicators.
This set $\MM$ of marked edges is the one that is used to guide the local mesh-refinement
in our goal-oriented adaptive algorithm.

In order to identify the set $\gotM \subseteq \gotQ$ of marked indices to be added to the current index set $\gotP$,
we follow the same marking procedure as described above by replacing
$\MM$, $\mathcal{E}^{\rm{int}}_{\TT} $, $E$, $\mu_{Y\gotP}(E)$, $\zeta_{Y\gotP}(E)$, and $\thetaX$
with 
$\gotM$, $\gotQ$, $\nu$, $\mu_{X\gotQ}(\nu)$, $\zeta_{X\gotQ}(\nu)$, and $\thetaP$, respectively
(here, $0 < \thetaP \leq 1$ is a given D\"orfler marking parameter).

\subsection{Mesh-refinement and polynomial enrichment}  \label{sec:refinements}
Let $\MM \subseteq \mathcal{E}^{\rm{int}}_{\TT}$ be a set of marked interior edges and $\widetilde \TT = \Refine(\TT,\MM)$.
We denote by $\RR \subseteq \mathcal{E}^{\rm{int}}_{\TT}$ the set of all edges that are bisected during this refinement,
i.e., $\RR = \mathcal{E}^{\rm{int}}_{\TT} \setminus \mathcal{E}^{\rm{int}}_{\widetilde\TT} \supseteq \MM$.

Since the polynomial space over the parameter domain is fully determined by the associated index set,
the enrichment of the polynomial space is performed 
simply by adding all marked indices $\nu \in \gotM \; \subseteq \; \gotQ$ to the current index set $\gotP$,
i.e., by setting $\widetilde\gotP:= \gotP \cup \gotM$. 

An important feature of the adaptive algorithm presented in the next section
is that it is driven by the estimates of the error reductions
associated with local mesh-refinement and enrichment of the polynomial space on $\Gamma$.
Suppose that the enriched finite-dimensional space is given by
$V_{\widetilde X\widetilde\gotP} \,:=\, \widetilde X \otimes \hull{\set{P_\nu}{\nu \in \widetilde\gotP}}$,
where $\widetilde X := \SS^1_0(\widetilde\TT)$.
Let $u_{\widetilde X \widetilde\gotP} \in V_{\widetilde X \widetilde\gotP}$ be the corresponding Galerkin solution.
We note that Theorem~\ref{theorem:twolevel} applies to $\widehat u_{X\gotP} - u_{X\gotP}$ as well as
to $u_{\widetilde X \widetilde\gotP} - u_{X\gotP}$.
Furthermore, it is important to observe that longest edge bisection ensures that for $E \in \RR$,
the associated hat function $\varphi_E$ is \emph{the same in $\widetilde X$ and $\widehat X$}.
This observation together with the Pythagoras theorem~\eqref{eq:hh2:Pythagoras}
applied to $u_{X\gotP} \in V_{X\gotP}$ and $u_{\widetilde X \widetilde\gotP} \in V_{\widetilde V\widetilde\gotP}$
yield the following estimate of the error reduction
\begin{equation} \label{eq3:theorem:twolevel}
   \bnorm{u - u_{X\gotP}}^2
   - \bnorm{u - u_{\widetilde X \widetilde\gotP}}^2
   \;\reff{eq:hh2:Pythagoras}=\; \bnorm{u_{\widetilde X \widetilde\gotP} - u_{X\gotP}}^2
   \reff{eq1:theorem:twolevel}\simeq \sum_{E \in \RR} \mu_{Y\gotP}^2(E) + \sum_{\nu \in \gotM} \mu_{X\gotQ}^2(\nu).
\end{equation}

\begin{remark}
We note that the estimate~\eqref{eq3:theorem:twolevel} of the error reduction hinges on the mesh-refinement strategy
in the sense that the additional hat functions $\varphi_E$ for $E \in \RR$ must coincide in $\widetilde X$ and $\widehat X$.
As mentioned, this property holds for longest edge bisection as well as for
newest vertex bisection~\cite{steve08,kpp13}, but fails, e.g., for the red-green-blue refinement~\cite{verfuerth}.
\end{remark}%

\subsection{Goal-oriented adaptive algorithm} 

Let us now present a goal-oriented adaptive algorithm for numerical approximation
of $G(u)$, where $u \in V$ is the weak solution to the 
parametric model problem~\eqref{eq:strongform} and
$G$ is the goal functional defined by~\eqref{parametric:goal}.

In the rest of the paper, $\ell \in \N_0$ denotes the iteration counter in the adaptive algorithm
and we use the subscript $\ell$ for triangulations, index sets, Galerkin solutions, error estimates, {\em etc.},
associated with the $\ell$-th iteration of the adaptive loop.
In particular, $V_\ell := V_{X_\ell\gotP_\ell} = X_\ell \otimes \hull{\set{P_\nu}{\nu\in\gotP_\ell}}$
denotes the finite-dimensional subspace of~$V$,
$\ul \in V_\ell$ and $\zl \in V_\ell$ are the primal and dual Galerkin solutions satisfying~\eqref{eq:discreteform}
and~\eqref{parametric:dual:problem:discrete}, respectively, and
$\mu_\ell := \mu_{X_\ell\gotP_\ell}$ and $\zeta_\ell := \zeta_{X_\ell\gotP_\ell}$ are the
associated (global) error estimates (see, e.g.,~\eqref{def:mu}).

\bigskip\hrule\hrule
\begin{algorithm} \label{alg:parametric}
\emph{\bf Goal-oriented adaptive stochastic Galerkin FEM.}
\medskip\hrule\hrule\medskip

\noindent\emph{\textsf{INPUT:}} data $a$, $f$, $g$; initial (coarse) triangulation $\TT_0$, initial index set $\gotP_0$;
marking parameters $0 < \thetaX, \thetaP \leq 1$;
tolerance \emph{$\textsf{tol}$}.
\medskip

\noindent\emph{\textsf{for}} $\ell = 0, 1, 2, \dots$, \emph{\textsf{do}}:
\begin{itemize}
\item[\rm(i)] 
\emph{SOLVE:}
compute $u_\ell, z_\ell \in V_\ell$;
\item[\rm(ii)] 
\emph{ESTIMATE:}
compute four sets of error indicators (see~\eqref{eq:indicators})\\
$\{ \mu_{Y\gotP}(E) : E \in \mathcal{E}^{\rm{int}}_{\TT_\ell} \},
  \{ \mu_{X\gotQ}(\nu) : \nu \in \gotQ_\ell \},
  \{ \zeta_{Y\gotP}(E) : E \in \mathcal{E}^{\rm{int}}_{\TT_\ell} \},
  \{ \zeta_{X\gotQ}(\nu) : \nu \in \gotQ_\ell \}$\\
and two (global) error estimates $\mu_\ell$ and $\zeta_\ell$ (see~\eqref{mu:global:contributions});
\item[\rm(iii)] 
\emph{\textsf{if}} $\mu_\ell\,\zeta_\ell \le \emph{\textsf{tol}}$ \emph{\textsf{then break}}; \emph{\textsf{endif}}
\item[\rm(iv)]
\emph{MARK:} 
use the procedure described in~{\rm \S\ref{sec:markstrat}}
to find the set $\MM_\ell \subseteq \mathcal{E}^{\rm{int}}_{\TT_\ell}$ of marked edges and
the set $\gotM_\ell \subseteq \Ql$ of marked indices.
\item[\rm(v)]
\emph{REFINE:} 
	\begin{itemize}
	\item[\rm(v-a)] 
	Compute two error reduction estimates:
	\begin{subequations}
	\begin{align}
	\rho_{X,\ell}^2
	& := \mu_\ell^2 	\left(\sum_{E \in \RR_\ell} 	\zeta^2_{Y\gotP}(E) \right) +
    	 \zeta_\ell^2 	\left(\sum_{E \in \RR_\ell} 	\mu^2_{Y \gotP}(E) \right),
	\label{eq:rhoX} \\
	\rho_{\gotP,\ell}^2 
	& := \mu_\ell^2 	\left(\sum_{\nu \in \gotM_\ell} \zeta^2_{X\gotQ}(\nu) \right) +
		 \zeta_\ell^2 	\left(\sum_{\nu \in \gotM_\ell}	\mu^2_{X\gotQ}(\nu) \right),
	\label{eq:rhoB}
	\end{align}
	\end{subequations}
	where $\RR_\ell \subseteq \mathcal{E}^{\rm{int}}_{\TT_\ell}$ is 
	 the set of all edges to be bisected if $\TT_\ell$ is refined (see~{\rm \S\ref{sec:refinements}}).
	\item[\rm(v-b)] 
	\emph{\textsf{if}} $\rho_{X,\ell} \ge \rho_{\gotP,\ell}$ \emph{\textsf{then}}
	\begin{itemize}
	\item[] 
	define $\TT_{\ell+1} = \Refine(\TT_\ell,\MM_\ell)$ and $\gotP_{\ell+1} = \gotP_\ell$
	(i.e., refine the spatial triangulation $\TT_\ell$ and keep the index set $\gotP_\ell$);
	\end{itemize}
	
	\emph{\textsf{else}}
	\begin{itemize}
	\item[] 
	define $\TT_{\ell+1} = \TT_\ell$ and $\gotP_{\ell+1} = \gotP_\ell \cup \gotM_\ell$ 
	(i.e., keep the spatial triangulation $\TT_\ell$ and enlarge the index set $\gotP_\ell$).
	\end{itemize}
	\emph{\textsf{endif}}
	\end{itemize}
\end{itemize}
\noindent\emph{\textsf{endfor}}
\medskip

\noindent\emph{\textsf{OUTPUT:}} sequences of nested 
triangulations $\{ \TT_\ell \}$, increasing index sets $\{\gotP_\ell\}$,
primal and dual Galerkin solutions $\{ u_\ell,\, z_\ell \}$,
the corresponding energy error estimates $\{ \mu_\ell,\,\zeta_\ell \}$, and
the estimates $\{ \mu_\ell\,\zeta_\ell \}$ of the error in approximating~$G(u)$. 
\medskip\hrule\hrule
\end{algorithm}
\medskip

Let us give a motivation behind Step~{\rm(v)} in Algorithm~\ref{alg:parametric}.
This relies on the fact that the algorithm employs the product of energy errors $\bnorm{u-u_\ell}\,\bnorm{z-z_\ell}$
in order to control the error in approximating~$G(u)$; see~\eqref{eq:errorInequality}.

Let $\widetilde{V_\ell} \supset V_\ell$ be an enrichment of $V_{\ell}$
(e.g., $\widetilde{V_\ell} = X_{\ell+1} \otimes \hull{\set{P_\nu}{\nu\in\gotP_\ell}}$
or
$\widetilde{V_\ell} = X_{\ell} \otimes \hull{\set{P_\nu}{\nu\in\gotP_{\ell+1}}}$).
Let $\uhatl, \zhatl \in \widetilde{V_\ell}$ denote the enhanced primal and dual Galerkin solutions.
One has (see~\eqref{eq:hh2:Pythagoras})
\[
  \bnorm{u - \uhatl}^2 = \bnorm{u - \ul}^2 - \bnorm{\ul - \uhatl}^2
  \quad \text{and} \quad
  \bnorm{z - \zhatl}^2 = \bnorm{z - \zl}^2 - \bnorm{\zl - \zhatl}^2.
\]
Hence,
\begin{equation} \label{goal:error:reduction}
       \begin{split}
       \bnorm{u - \uhatl}^2 \, \bnorm{z - \zhatl}^2 
	& = \bnorm{u - \ul}^2 \,\bnorm{z - \zl}^2 + \bnorm{\ul - \uhatl}^2 \bnorm{\zl - \zhatl}^2\\ 
	& \quad\;- \left( \bnorm{u - \ul}^2 \,\bnorm{\zl - \zhatl}^2 + \bnorm{\ul - \uhatl}^2 \, \bnorm{z - \zl}^2\right).
       \end{split}
\end{equation}
Equality \eqref{goal:error:reduction} shows that the quantity
\begin{equation} \label{goal:estimator:product:reduction}
       \bnorm{u - \ul}^2 \,\bnorm{\zl - \zhatl}^2 +  \bnorm{\ul - \uhatl}^2\,  \bnorm{z - \zl}^2 
\end{equation}
provides a good approximation of the reduction in the product of energy errors
that would be achieved due to enrichment of the approximation space.
In fact, the true reduction in the product of energy errors also includes the term
$-\bnorm{\ul - \uhatl}^2 \bnorm{\zl - \zhatl}^2$ (see~\eqref{goal:error:reduction}).
This term (in absolute value) is normally much smaller compared to the sum in~\eqref{goal:estimator:product:reduction}
and may thus be neglected.

Now recall that Theorem~\ref{theorem:twolevel} provides computable estimates of the energy errors
(see~\eqref{eq1:theorem:twolevel})
and of the energy error reductions 
(see~\eqref{eq3:theorem:twolevel}).
Using these results to bound each term in~\eqref{goal:estimator:product:reduction}, we obtain
the estimate of reduction in the product of energy errors.
In particular, the reduction due to mesh-refinement (by bisection of all edges in $\RR_\ell$, see~\S\ref{sec:refinements})
is estimated by $\rho^2_{X,\ell}$ defined in~\eqref{eq:rhoX}, i.e.,
\[
  \bnorm{u - \ul}^2 \,\bnorm{\zl - \zhatl}^2 + 
  \bnorm{z - \zl}^2 \,\bnorm{\ul - \uhatl}^2
  \simeq
  \rho^2_{X,\ell}.
\]
Similarly, the reduction due to polynomial enrichment (by adding the set $\gotM_\ell$ of marked indices)
is estimated by $\rho^2_{\gotP,\ell}$ defined in~\eqref{eq:rhoB}.
Thus, by comparing these two estimates ($\rho_{X,\ell}$ and $\rho_{\gotP,\ell}$),
the adaptive algorithm chooses the enrichment of $V_\ell$
(either mesh-refinement or polynomial enrichment)
that corresponds to a larger estimate of the associated error reduction
(see step~{\rm(v-b)} in Algorithm~\ref{alg:parametric}).

\section{Numerical experiments}	\label{sec:numerical:examples}
\noindent
In this section, we report the results of some numerical experiments that demonstrate the performance
of the goal-oriented adaptive algorithm described in Section~\ref{sec:goaloriented:adaptivity}
for parametric model problems.
All experiments were performed using the open source MATLAB toolbox Stochastic T-IFISS~\cite{BespalovR_stoch_tifiss}
on a desktop computer equipped with an Intel Core CPU i5-4590@3.30GHz and 8.00GB RAM.

\subsection{Outline of the experiments} \label{sec:experiments:outline}
Staying within the framework of the parametric model problem~\eqref{eq:strongform}
and the goal functional~\eqref{parametric:goal},
we use the representations of $f$ and $g$ as introduced in~\cite{ms09}
(see also~\cite[Section~4]{fpv16}) to define the corresponding right-hand side functionals
$F(v)$ and $G(v)$ in~\eqref{eq:weakform} and~\eqref{parametric:dual:problem}, respectively.
Specifically, let $f_i, g_i \in L^2(D)$ ($i=0,1,2$) and set $\ff = (f_1,f_2)$ and $\gg = (g_1,g_2)$.
Define
\begin{equation} \label{rhs:primal:problem}
      F(v) = \int_\Gamma \int_D f_0(x)v(x,\y) \, dx \, \dpiy - \int_\Gamma \int_D \ff(x) \cdot \nabla v(x,\y) \, dx \, \dpiy 
      \quad \text{for all $v \in V$}
\end{equation}
and 
\begin{equation} \label{rhs:dual:problem}
      G(v) = \int_\Gamma \int_D g_0(x) v(x,\y) \, dx \, \dpiy - \int_\Gamma \int_D \gg(x) \cdot \nabla v(x,\y) \, dx \, \dpiy
      \quad \text{for all $v \in V$}.
\end{equation}
The motivation behind these representations is to introduce different non-geometric singularities in the primal and dual solutions.
In the context of goal-oriented adaptivity, this emphasizes the need for separate marking to resolve singularities
in both solutions in different regions of the computational domain.

In all experiments, we run Algorithm~\ref{alg:parametric} with the initial index set
\begin{equation*} 
      \gotP_0 := \{ (0,0,0,\dots), \, (1,0,0,\dots) \}
\end{equation*}
and collect the following output data:
\begin{itemize}
\item
the number of iterations $L = L (\mathsf{tol})$ needed to reach the prescribed tolerance~$\mathsf{tol}$;
\item
the final goal-oriented error estimate $\mu_{L} \zeta_{L}$;
\item 
the overall computational time $t$;
\item
the overall computational ``cost''
\[
N_{\rm total} := \sum_{\ell=0}^L \dim(V_\ell), 
\]
which reflects the total amount of work in the adaptive process;
\item
the final number of degrees of freedom
\[
  N_L := \dim (V_L) 
  = \dim\left(X_L\right) \, \#\gotP_L 
  = \#\NN_L \, \#\gotP_L,
\]
where $\NN_L$ denotes the set of interior vertices of $\TT_L$;
\item 
the number of elements $\#\TT_L$ and the number of interior vertices $\#\NN_L$ of the final triangulation $\TT_L$;
\item
the cardinality of the final index set $\gotP_L$ and
the number of active parameters in~$\gotP_L$, denoted by $M^\mathrm{active}_L$; 
\item
the evolution of the index set, i.e., $\{\gotP_\ell \,:\, \ell = 0,1,\ldots,L\}$.
\end{itemize}
In order to test the effectiveness of our goal-oriented error estimation,
we compare the product $\mu_\ell\zeta_\ell$ with the reference error $|G(\uref) - G(u_\ell)|$,
where $\uref \in V_{\rm ref}~:= X_{\rm ref}\otimes \hull{\set{P_\nu}{\nu\in\gotP_{\rm ref}}}$
is an accurate primal solution.
In order to compute $\uref$ we employ quadratic (P2) finite element approximations over a fine triangulation $\TT_{\rm ref}$
and use a large index set $\gotP_{\rm ref}$
($\TT_{\rm ref}$ and $\gotP_{\rm ref}$ are to be specified in each experiment).
%
Then, the effectivity indices are computed as follows:
\begin{equation} \label{eff:indices}
      \Theta_\ell := \frac{\mu_\ell\,\zeta_\ell}{\lvert G(\uref) - G(u_\ell) \lvert}, 
      \qquad \ell = 0,\dots,L.
\end{equation}

\begin{figure}[b!]
\centering
\footnotesize
	\includegraphics[scale=0.36]{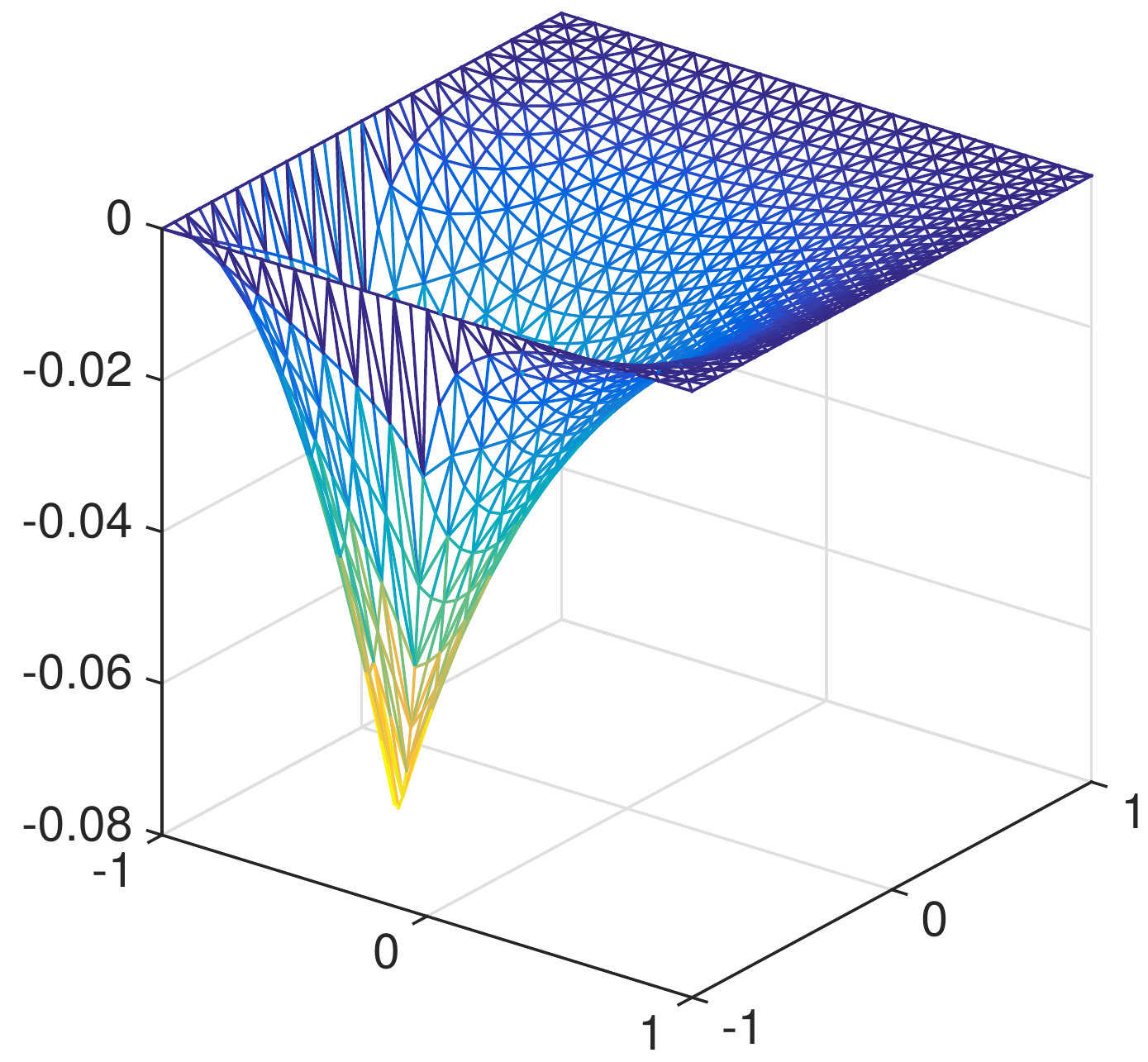} 
	\hspace{0.5cm}
	\includegraphics[scale=0.36]{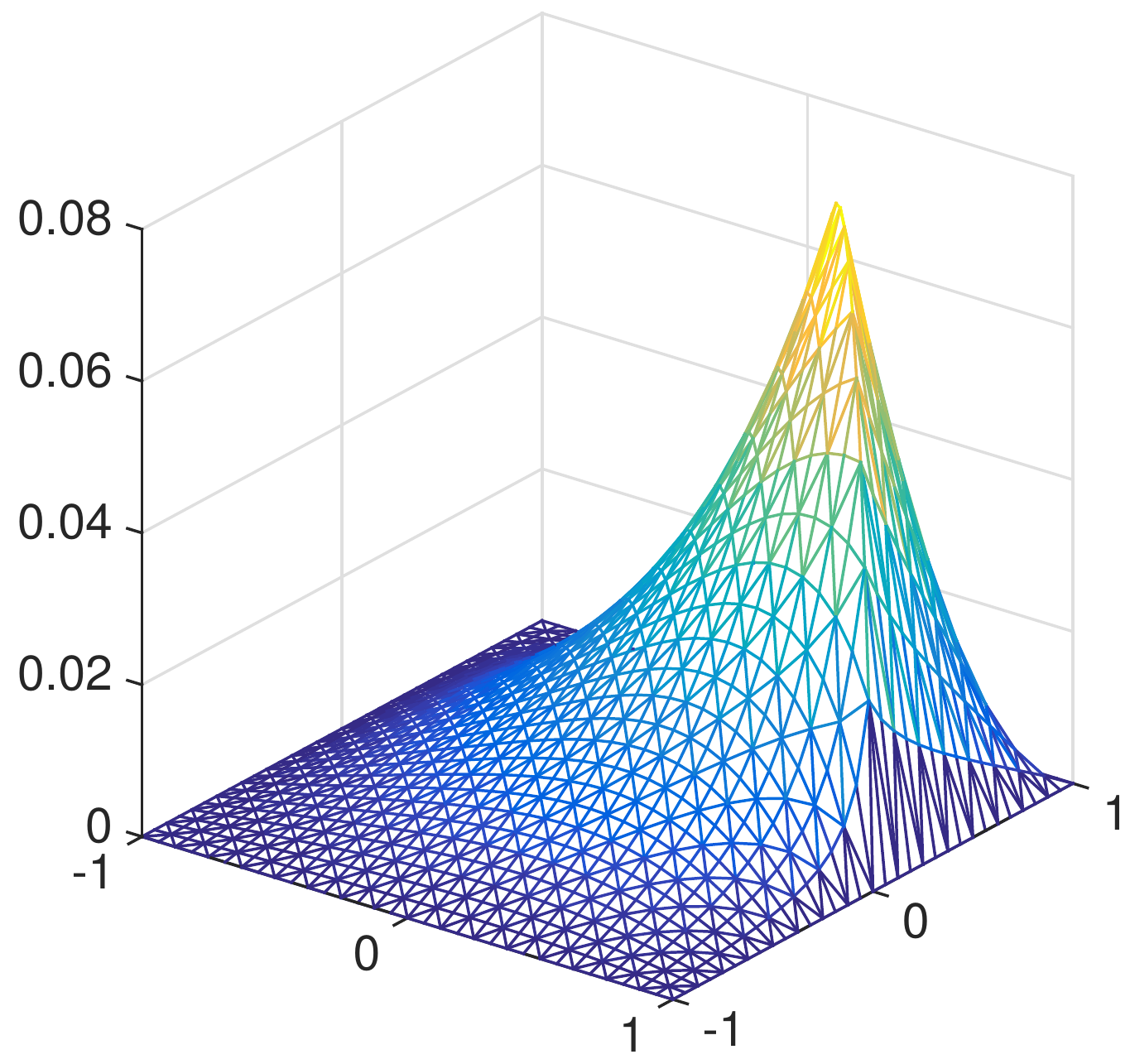}
\caption{
The mean fields of primal (left) and dual (right) Galerkin solutions in Experiment~1. 
}
\label{Ex1:primal:dual:sol}
\end{figure}
\subsection{Experiment~1} 
In the first experiment, we demonstrate the performance of Algorithm~\ref{alg:parametric}
for the parametric model problem \eqref{eq:strongform} posed on the square domain $D=(-1,1)^2$. 
Suppose that the coefficient $a(x,\y)$ in~\eqref{eq:strongform} is a parametric representation
of a second-order random field with prescribed mean $\E[a]$ and covariance function~$\Cov[a]$.
We assume that $\Cov[a]$ is the separable exponential covariance function given by 
\begin{equation*} 
       \Cov[a](x,x') = \sigma^2 \exp\left(-\frac{|x_1 - x_1'|}{l_1} - \frac{|x_2 - x_2'|}{l_2}\right), 
\end{equation*}
where $x=(x_1,x_2) \in D,\ x'=(x_1',x_2') \in D$, $\sigma$ denotes the standard deviation, 
and $l_1, l_2$ are correlation lengths.
In this case, $a(x,\y)$ can be expressed using the Karhunen--L\`oeve expansion 
\begin{equation} \label{random:field:KL}
a(x,\y) = \E[a](x) + c \, \sigma \sum_{m=1}^\infty y_m\,\sqrt{\lambda_m}\,\varphi_m(x), 
\end{equation}
where $\{(\lambda_m, \varphi_m)\}_{m=1}^\infty$ are the eigenpairs of the integral operator 
$\int_D \Cov[a](x,x') \varphi(x')dx'$, 
$y_m$ are the images of pairwise uncorrelated mean-zero random variables,
and the constant $c > 0$ 
is chosen such that ${\rm Var}(c \, y_m)=1$ for all $m \in \N$.
Note that analytical expressions for $\lambda_m$ and $\varphi_m$ exist in the one-dimensional case 
(see, e.g., \cite[pages~28--29]{gs91}); as a consequence, the formulas for rectangular domains follow by tensorization. 
In this experiment, we assume that $y_m$ 
are the images of independent mean-zero 
random variables on $\Gamma_m = [-1,1]$ that have a ``truncated'' Gaussian density:
\begin{equation} \label{trunc:gaussian:density}
\rho(y_m) = (2\Phi(1) - 1)^{-1}\left(\sqrt{2\pi}\right)^{-1}\exp\left(-y_m^2 / 2\right)
\quad\text{for all } m \in \N,
\end{equation}
where $\Phi(\cdot)$ is the Gaussian cumulative distribution function
(in this case, $c \approx 1.8534$ in~\eqref{random:field:KL}).
Thus, in order to construct a polynomial space on $\Gamma$
we employ the set of orthonormal polynomials 
generated by the probability density function~\eqref{trunc:gaussian:density}
and satisfying the three-term recurrence \eqref{three:term:rec}.
These polynomials are known as Rys polynomials; see, e.g., \cite[Example~1.11]{g04}.

We test the performance of Algorithm~\ref{alg:parametric} by considering a parametric version of Example~7.3 in \cite{ms09}.
Specifically, let $f_0 = g_0 = 0$, $\ff =(\rchi_{T_f}, 0)$, and $\gg = (\rchi_{T_g},0)$, 
where $\rchi_{T_f}$ and $\rchi_{T_g}$ denote the characteristic functions of the triangles
\[
  T_f:=\text{conv}\{(-1,-1), (0,-1), (-1,0)\}\quad \hbox{and}\quad T_g := \text{conv}\{(1,1), (0,1), (1,0)\},
\]
respectively (see Figure~\ref{Ex1:meshes} (left)).
Then, the functionals $F$ and $G$ in \eqref{rhs:primal:problem}--\eqref{rhs:dual:problem} read as
\begin{equation*}
          F(v) = 
          -\int_\Gamma \int_{T_f} \frac{\partial v}{\partial x_1}(x,\y) \, dx \, \dpiy, 
\ \
          G(v) = 
          -\int_\Gamma \int_{T_g} \frac{\partial v}{\partial x_1}(x,\y) \, dx \, \dpiy	
\ \ \text{for all } v \in V.
\end{equation*}

\begin{figure}[t!]
\centering
\footnotesize
\includegraphics[scale=0.29]{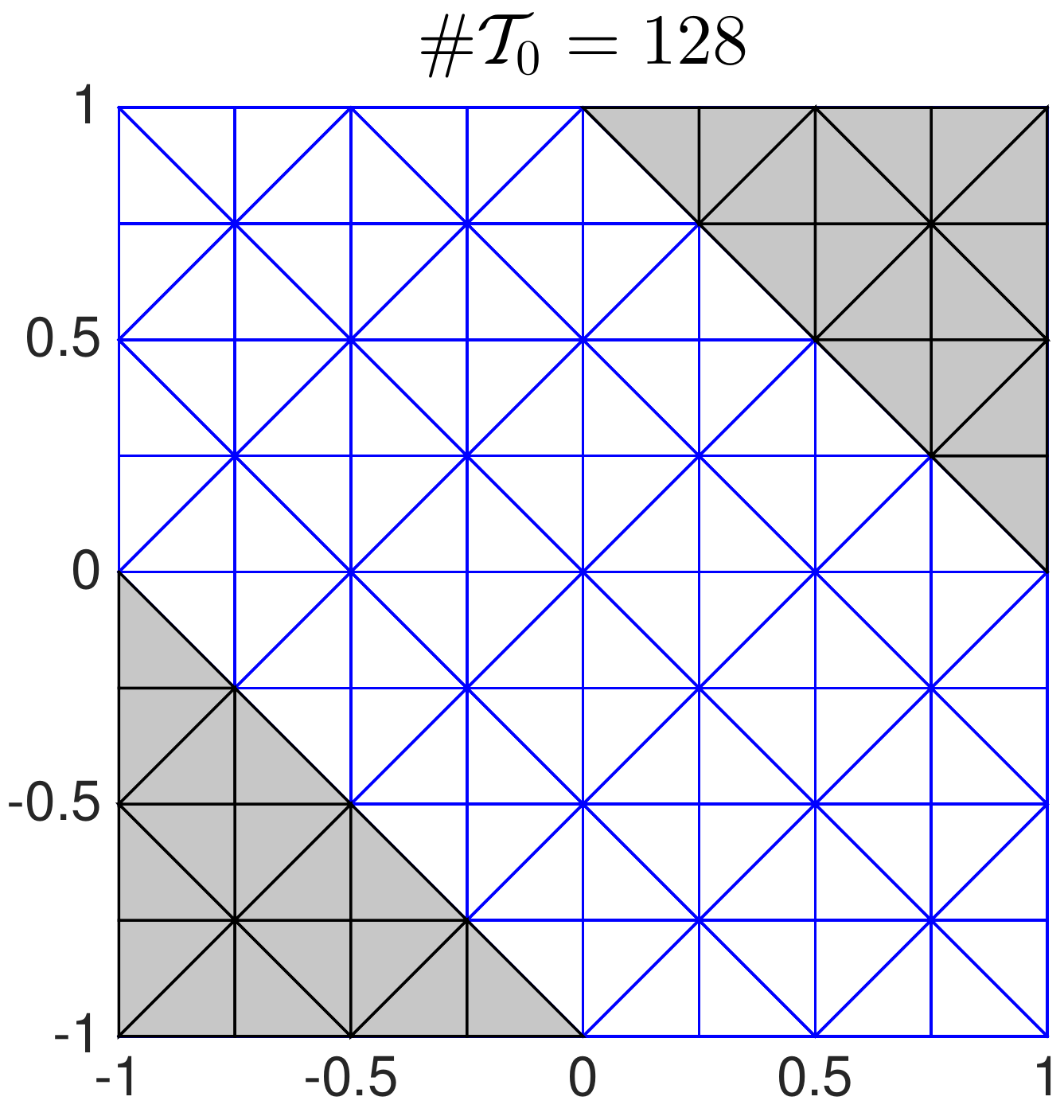} 	\hspace{0.2cm}  
\includegraphics[scale=0.29]{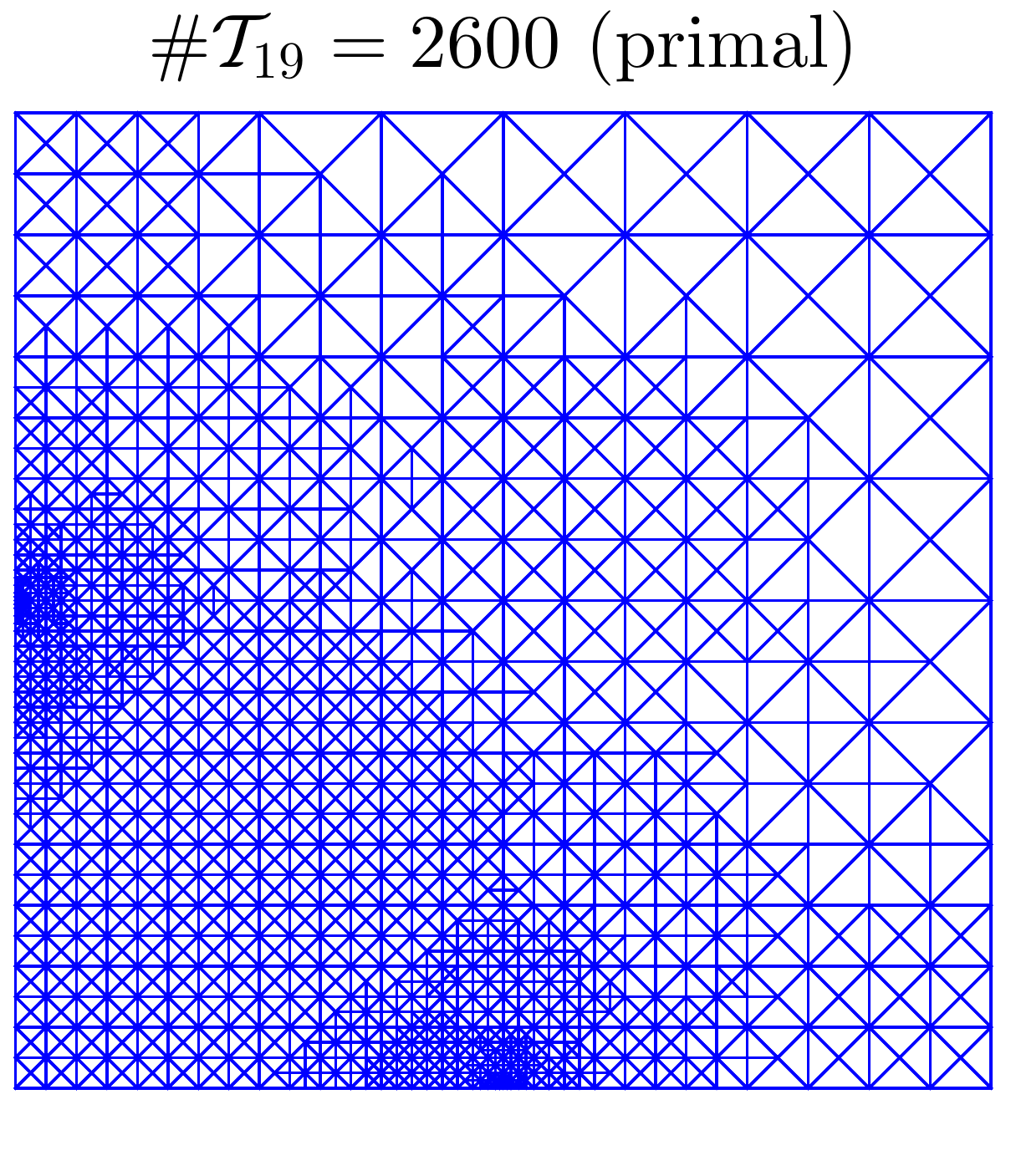}	\hspace{0.2cm}
\includegraphics[scale=0.29]{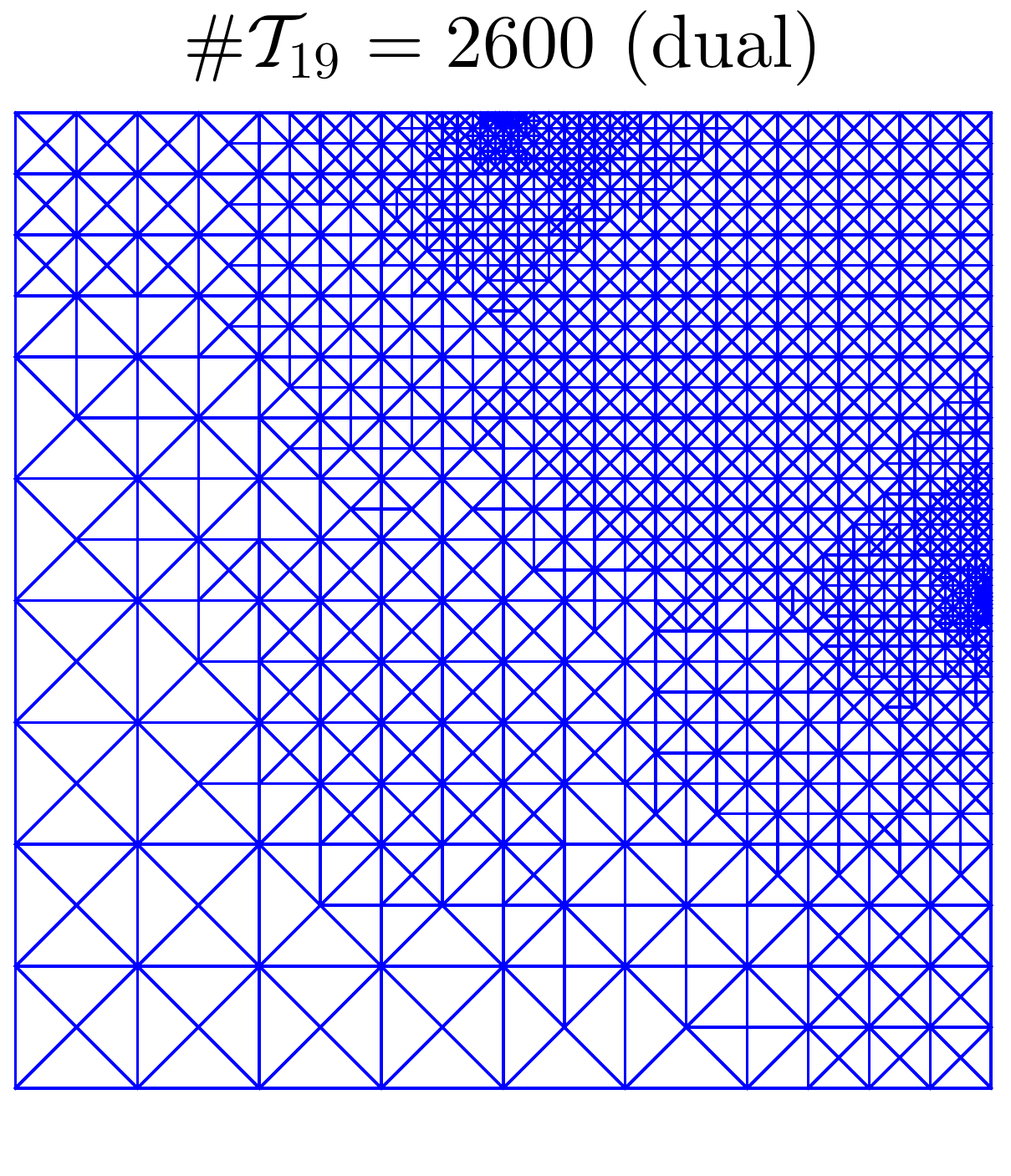} \hspace{0.2cm} 
\includegraphics[scale=0.29]{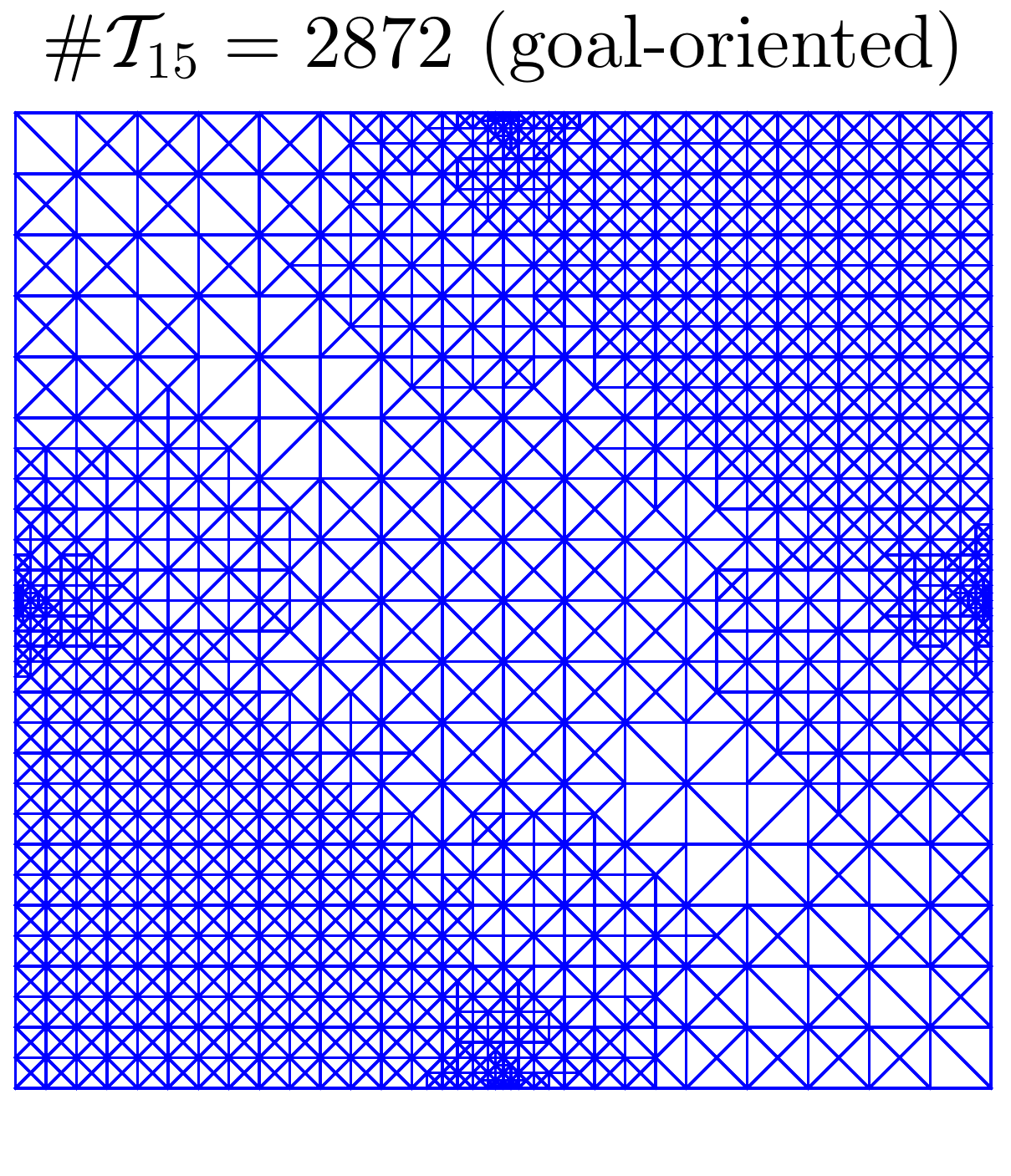} 
\caption{
Experiment~1: Initial triangulation $\TT_0$ with shaded triangles $T_f$ and $T_g$ (left plot);
triangulations generated by the standard adaptive sGFEM algorithm with spatial refinements 
driven either by the error estimates $\mu_\ell$ or
by the error estimates $\zeta_\ell$ (two middle plots); 
triangulation generated by the goal-oriented adaptive algorithm (right plot).
}
\label{Ex1:meshes}
\end{figure}

Setting $\sigma = 0.15$, $l_1 = l_2 = 2.0$, and $\E[a](x) = 2$ for all $x \in D$, 
we compare the performance of Algorithm~\ref{alg:parametric} for different input values of the marking 
parameter $\thetaX$ 
as well as the parameter $\overline{M}$ in~\eqref{finite:index:set:Q}.
More precisely, we consider two sets of marking parameters:
(i) $\thetaX = 0.5$, $\thetaP=0.9$; \ (ii) $\thetaX = 0.25$, $\thetaP=0.9$.
In each case, we run Algorithm~\ref{alg:parametric} with $\overline{M}=1$ and $\overline{M}=2$. 
The same stopping tolerance is set to $\mathsf{tol}=7$e-$6$ in all four computations. 

Figure~\ref{Ex1:primal:dual:sol} (left) shows the mean field of the primal Galerkin solution 
exhibiting a singularity along the line connecting the points $(-1,0)$ and $(0,-1)$.
Similarly, the mean field of the dual Galerkin solution in Figure~\ref{Ex1:primal:dual:sol} (right)
exhibits a singularity along the line connecting the points $(1,0)$ and $(0,1)$. 

Figure~\ref{Ex1:meshes} (left plot) shows the initial triangulation $\TT_0$ used in this experiment.
The two middle plots in Figure~\ref{Ex1:meshes} depict the refined triangulations generated
by an adaptive sGFEM algorithm with spatial refinements
driven either solely by the estimates $\mu_\ell$ for the error in the primal Galerkin solution 
or solely by the estimates $\zeta_\ell$ for the error in the dual Galerkin solution. 
The right plot in Figure~\ref{Ex1:meshes} shows the triangulation produced by Algorithm~\ref{alg:parametric}. 
As expected, this triangulation simultaneously captures spatial features of primal and dual solutions. 

\begin{table}[t!]
\setlength\tabcolsep{12.5pt} 
\begin{center} 
\smallfontthree{
\renewcommand{\arraystretch}{1.4}
\begin{tabular}{r !{\vrule width 1.0pt} c | c !{\vrule width 1.0pt} c | c }
\noalign{\hrule height 1.0pt}
						&\multicolumn{2}{c!{\vrule width 1.0pt}}{case~(i): $\thetaX=0.5$, $\thetaP=0.9$}
						&\multicolumn{2}{c}{case~(ii): $\thetaX=0.25$, $\thetaP=0.9$}\\[2pt]
\cline{2-5} 
						&\multicolumn{1}{c|}{$\overline{M}=1$} 
						&\multicolumn{1}{c!{\vrule width 1.0pt}}{$\overline{M}=2$}
						&\multicolumn{1}{c|}{$\overline{M}=1$} 
						&\multicolumn{1}{c}{$\overline{M}=2$}\\
\hline				 							
$L$						&\multicolumn{1}{c|}{$27$}		
						&\multicolumn{1}{c!{\vrule width 1.0pt}}{$25$}
						&\multicolumn{1}{c|}{$43$}
						&\multicolumn{1}{c}{$42$}\\[-3pt]

$\mu_L\zeta_L$			&\multicolumn{1}{c|}{$5.7152\text{e-}06$}	
						&\multicolumn{1}{c!{\vrule width 1.0pt}}{$6.6884\text{e-}06$}
						&\multicolumn{1}{c|}{$6.4015\text{e-}06$}
						&\multicolumn{1}{c}{$6.4406\text{e-}06$}\\[-3pt]

$t$ (sec)	 			&\multicolumn{1}{c|}{$307$}	
						&\multicolumn{1}{c!{\vrule width 1.0pt}}{$312$}
						&\multicolumn{1}{c|}{$323$}
						&\multicolumn{1}{c}{$468$}\\[-3pt]

$N_{\rm total}$			&\multicolumn{1}{c|}{$2,825,160$}	
						&\multicolumn{1}{c!{\vrule width 1.0pt}}{$3,201,507$}
						&\multicolumn{1}{c|}{$3,433,577$}
						&\multicolumn{1}{c}{$5,702,589$}\\[-3pt]
														
$N_L$					&\multicolumn{1}{c|}{$710,467$}
						&\multicolumn{1}{c!{\vrule width 1.0pt}}{$786,390$}
						&\multicolumn{1}{c|}{$552,442$}
						&\multicolumn{1}{c}{$979,017$}\\[-3pt]
									
$\#\TT_L$				&\multicolumn{1}{c|}{$75,568$}
						&\multicolumn{1}{c!{\vrule width 1.0pt}}{$53,044$}
						&\multicolumn{1}{c|}{$50,808$}
						&\multicolumn{1}{c}{$50,792$}\\[-3pt]
						
$\#\NN_L$				&\multicolumn{1}{c|}{$37,393$}
						&\multicolumn{1}{c!{\vrule width 1.0pt}}{$26,213$}
						&\multicolumn{1}{c|}{$25,111$}
						&\multicolumn{1}{c}{$25,103$}\\[-3pt]
				
$\#\gotP_L$ 			&\multicolumn{1}{c|}{$19$}
						&\multicolumn{1}{c!{\vrule width 1.0pt}}{$30$}
						&\multicolumn{1}{c|}{$22$}
						&\multicolumn{1}{c}{$39$}\\[-3pt]						
				
$M^\mathrm{active}_L$	&\multicolumn{1}{c|}{$8$}
						&\multicolumn{1}{c!{\vrule width 1.0pt}}{$15$}
						&\multicolumn{1}{c|}{$10$}
						&\multicolumn{1}{c}{$17$}\\[2pt]
\noalign{\hrule height 1.0pt}
\end{tabular}
\vspace{8pt}
\caption{
The outputs obtained by running Algorithm~\ref{alg:parametric}
with $\thetaX = 0.5$, $\thetaP = 0.9$ (case (i)) and $\thetaX = 0.25$, $\thetaP = 0.9$ (case~(ii)) 
in Experiment~1.
}
\label{Ex1:table:i:ii}
}
\end{center}                                                                   
\end{table}

\begin{table}[t!]
\setlength\tabcolsep{10pt} 
\begin{center} 
\smallfontthree{
\renewcommand{\arraystretch}{1.4}
\begin{tabular}{r !{\vrule width 1.0pt} c c !{\vrule width 1.0pt} c c }
\noalign{\hrule height 1.0pt}
&\multicolumn{4}{c}{case~(i): $\thetaX = 0.5$, $\thetaP = 0.9$}\\[2pt]
\cline{2-5}
&\multicolumn{2}{c!{\vrule width 1.0pt}}{$\overline{M} = 1$} 
&\multicolumn{2}{c}{$\overline{M} = 2$}\\
\hline
$\gotP_\ell$	
&\quad$\ell=0$	&$(0\ 0)$\quad				&$\ell=0$	&$(0\ 0)$	\\[-4pt]	
&			&$(1\ 0)$						&			&$(1\ 0)$	\\
					
&\quad$\ell=11$	&$\eps{2}{2}$				&$\ell=10$	&$\eps{3}{3}$	\\[-4pt]
&			&								&			&$\eps{2}{3}$	\\

&\quad$\ell=12$	&$\eps{3}{3}$				&$\ell=15$	&$\eps{5}{5}$	\\[-4pt]
&			&								&			&$\eps{4}{5}$	\\
		
&\quad$\ell=17$	&$\eps{4}{4}$				&$\ell=17$	&$\eps{7}{7}$	\\[-4pt]
&			&$(1\ 0\ 1\ 0)$					&			&$\eps{6}{7}$	\\[-4pt]
&			&$(2\ 0\ 0\ 0)$					&			&$(1\ 0\ 1\ \zero{4})$\\[-4pt]
&			&								&			&$(2\ \zero{6})$\\

&\quad$\ell=18$	&$\eps{5}{5}$				&$\ell=20$	&$\eps{9}{9}$\\[-4pt]
&			&								&			&$\eps{8}{9}$\\[-4pt]
&			&								&			&$(1\ 1\ \zero{7})$\\

&\quad$\ell=22$	&$\eps{6}{6}$				&$\ell=22$	&$\eps{11}{11}$\\[-4pt]
&			&$(1\ 1\ 0\ 0\ 0\ 0)$			&			&$\eps{10}{11}$\\[-4pt]
&			&								&			&$(0\ 1\ 1\ \zero{8})$	\\[-4pt]
&\quad$\ell=23$	&$\eps{7}{7}$				&			&$(1\ \zero{4}\ 1\ \zero{5})$\\[-4pt]
&			&$(1\ 0\ 0\ 1\ 0\ 0\ 0)$		&			&$(1\ \zero{3}\ 1\ \zero{6})$\\[-4pt]
&			&								&			&$(1\ \zero{2}\ 1\ \zero{7})$\\

&\quad$\ell=24$	&$\eps{8}{8}$				&$\ell=24$	&$\eps{13}{13}$\\[-4pt]	
&			&$(1\ 0\ 0\ 0\ 1\ 0\ 0\ 0)$		&			&$\eps{12}{13}$\\[-4pt]
&			&								&			&$(0\ 0\ 2\ \zero{10})$\\[-4pt]
&\quad$\ell=26$	&$\eps{9}{9}$				&			&$(0\ 2\ 0\ \zero{10})$\\[-4pt]
&			&$(0\ 1\ 1\ \zero{6})$			&			&$(1\ \zero{6}\ 1\ \zero{5})$\\[-4pt]
&			&$(1\ \zero{6}\ 1\ 0)$			&			&$(1\ \zero{5}\ 1\ \zero{6})$\\[-4pt]
&			&$(1\ \zero{5}\ 1\ 0\ 0)$		&			&\\[-4pt]
&			&$(1\ \zero{4}\ 1\ 0\ 0\ 0)$	&$\ell=25$	&$\eps{15}{15}$\\[-4pt]
&			&								&			&$\eps{14}{15}$\\[-4pt]
&			&								&			&$(0\ 1\ 0\ 1\ \zero{11})$\\[-4pt]
&			&								&			&$(1\ \zero{8}\ 1\ \zero{5})$\\[-4pt]
&			&								&			&$(1\ \zero{7}\ 1\ \zero{6})$\\[2pt]			
\noalign{\hrule height 1.0pt}
\end{tabular}
\vspace{8pt}
\caption{
The evolution of the index set 
obtained by running Algorithm~\ref{alg:parametric} with 
$\thetaX = 0.5$, $\thetaP = 0.9$ (case~(i)) in Experiment~1.
}
\label{Ex1:table:05:09:indexset}
}
\end{center}                                                                   
\end{table}

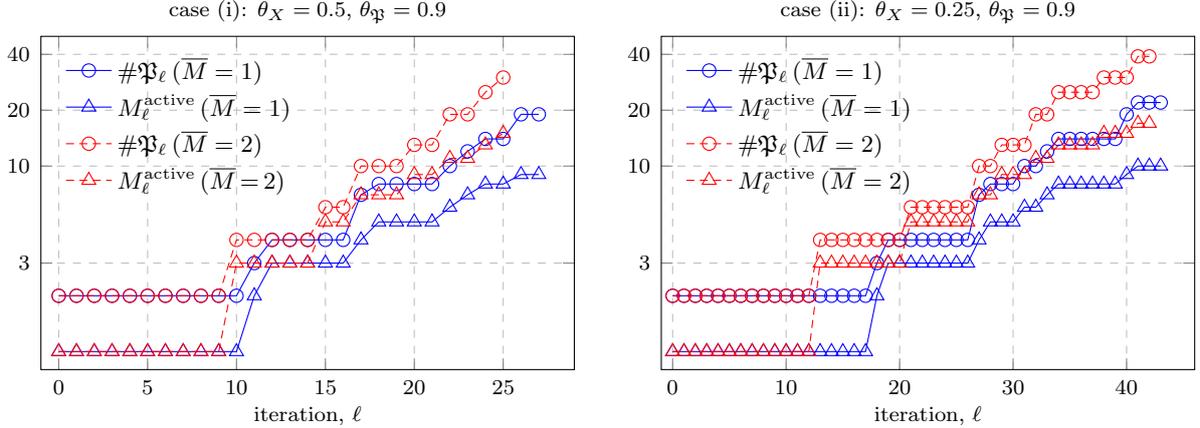
\begin{figure}[t!]
\begin{tikzpicture}
\pgfplotstableread{data/parametric/newex1/thetaX0.5_thetaP0.9_extrarv=1.dat}{\first}
\pgfplotstableread{data/parametric/newex1/thetaX0.5_thetaP0.9_extrarv=2.dat}{\second}
\begin{semilogyaxis}
[
width = 8.6cm, height=6.0cm,								
title={case (i): $\thetaX = 0.5$, $\thetaP = 0.9$},			
xlabel={iteration, $\ell$}, 
ylabel style={font=\tiny,yshift=-3.0ex}, 					
ymajorgrids=true, xmajorgrids=true, grid style=dashed,		
xmin = -1, 			xmax=29, 								
ymin = 0.8,			ymax=50,								
ytick={0,3,10,20,40},										
yticklabel style={/pgf/number format/.cd,fixed},
yticklabel={%
	\pgfmathfloatparsenumber{\tick}%
    \pgfmathfloatexp{\pgfmathresult}%
    \pgfmathprintnumber{\pgfmathresult}%
},
legend columns = 1,
cycle multi list = {color list\nexlist [2 of]mark list},
legend style={legend pos=north west, legend cell align=left, fill=none, draw=none, font={\fontsize{9pt}{12pt}\selectfont}}
]
%
\addplot[blue,mark=o,mark size=2.5pt]		table[x=iter, y=cardP]{\first};	
\addplot[blue,mark=triangle,mark size=3.0pt]table[x=iter, y=arv]{\first};	
%
%
%
\addplot[red,densely dashed,mark=o,mark size=2.5pt,mark options={solid}]		table[x=iter, y=cardP]{\second};
\addplot[red,densely dashed,mark=triangle,mark size=3.0pt,mark options={solid}]	table[x=iter, y=arv]{\second};  
%
\legend{
{$\#\gotP_\ell\, (\overline{M}= 1)$},
{$M^\mathrm{active}_\ell\, (\overline{M} = 1)$},
{$\#\gotP_\ell\, (\overline{M} = 2)$},
{$M^\mathrm{active}_\ell\, (\overline{M}\,{=}\,2)$},
}
\end{semilogyaxis}
\end{tikzpicture}
\hspace{8pt}
\begin{tikzpicture}
\pgfplotstableread{data/parametric/newex1/thetaX0.25_thetaP0.9_extrarv=1.dat}{\first}
\pgfplotstableread{data/parametric/newex1/thetaX0.25_thetaP0.9_extrarv=2.dat}{\second}
\begin{semilogyaxis}
[
width = 8.6cm, height=6.0cm,								
title={case (ii): $\thetaX = 0.25$, $\thetaP = 0.9$},		
xlabel={iteration, $\ell$},									
ylabel style={font=\tiny,yshift=-3.0ex}, 					
ymajorgrids=true, xmajorgrids=true, grid style=dashed,		
xmin=-1, 			xmax=46, 								
ymin=0.8,			ymax=50,								
ytick={3,10,20,40},											
yticklabel style={/pgf/number format/.cd,fixed},
yticklabel={%
	\pgfmathfloatparsenumber{\tick}%
    \pgfmathfloatexp{\pgfmathresult}%
    \pgfmathprintnumber{\pgfmathresult}%
},
legend columns = 1,
cycle multi list = {color list\nexlist [2 of]mark list},
legend style={legend pos=north west, legend cell align=left, fill=none, draw=none, font={\fontsize{9pt}{12pt}\selectfont}}
]
%
\addplot[blue,mark=o,mark size=2.5pt]		table[x=iter, y=cardP]{\first};	
\addplot[blue,mark=triangle,mark size=3.0pt]table[x=iter, y=arv]{\first};	
%
%
%
\addplot[red,densely dashed,mark=o,mark size=2.5pt,mark options={solid}]		table[x=iter, y=cardP]{\second};
\addplot[red,densely dashed,mark=triangle,mark size=3.0pt, mark options={solid}]table[x=iter, y=arv]{\second};  
%
\legend{
{$\#\gotP_\ell\, (\overline{M} = 1)$},
{$M^\mathrm{active}_\ell\, (\overline{M} = 1)$},
{$\#\gotP_\ell\, (\overline{M} = 2)$},
{$M^\mathrm{active}_\ell\, (\overline{M} = 2)$},
}
\end{semilogyaxis}
\end{tikzpicture}
\caption{
Characteristics of the index sets~$\gotP_\ell$  in Experiment~1 at each iteration 
of Algorithm~\ref{alg:parametric}.
}
\label{Ex1:dataindexset}
\end{figure}

In Table~\ref{Ex1:table:i:ii}, we collect the final outputs of computations 
in cases~(i) and~(ii) for $\overline{M} = 1$ and $\overline{M} = 2$, 
whereas Table~\ref{Ex1:table:05:09:indexset} shows the index set enrichments in case~(i).  
Recall that choosing a larger $\overline{M}$ in~\eqref{finite:index:set:Q}
leads to a larger detail index set, and hence, a larger set of marked indices, at each iteration.
As a result, more random variables are activated in the final index set and
the total number of iterations is reduced 
(compare the values of $L$, $\#\gotP_L$, and $M^\mathrm{active}_L$ in Table~\ref{Ex1:table:i:ii}). 
This can be also observed by looking at Figure~\ref{Ex1:dataindexset} that 
visualizes the evolution of the index set in cases~(i) and~(ii).
Furthermore, in case~(ii), due to a smaller marking parameter $\thetaX$, the algorithm
produces less refined triangulations
but takes more iterations to reach the tolerance than it does in case~(i)
(see the values of $\#\TT_L$ and $L$ in Table~\ref{Ex1:table:i:ii}).

Figure~\ref{Ex1:decay:est:05:09} shows the convergence history 
of three error estimates ($\mu_\ell$, $\zeta_\ell$, and $\mu_\ell \zeta_\ell$) and
the reference error $|G(\uref) - G(u_\ell)|$ in case~(i) for both~$\overline{M} = 1$
and~$\overline{M} = 2$ (see the end of this subsection for details on how the reference solution $\uref$ is computed). 
We observe that the estimates of the error in approximating~$G(u)$ (i.e., the products $\mu_\ell \zeta_\ell$)
decay with an overall rate of about $\mathcal{O}(N^{-0.55})$ 
for both $\overline{M}=1$ and $\overline{M}=2$. 
We notice that choosing $\overline{M}=2$ has a ``smoothing'' effect on the decay of $\mu_\ell\zeta_\ell$ 
(see Figure~\ref{Ex1:decay:est:05:09} (right));
this is due to larger index set enrichments in this case compared to those
in the case of $\overline{M} = 1$ 
(see the evolution of $\gotP_\ell$ in Table~\ref{Ex1:table:05:09:indexset}).

\begin{figure}[t!]
\begin{tikzpicture}
\pgfplotstableread{data/parametric/newex1/thetaX0.5_thetaP0.9_extrarv=1.dat}{\first}
\begin{loglogaxis}
[
width = 8.0cm, height=7cm,									
title={case (i) \,|\, $\overline{M} = 1$},					
xlabel={degree of freedom, $N_\ell$}, 						
ylabel={error estimate},									
ylabel style={font=\tiny,yshift=-1.0ex}, 					
ymajorgrids=true, xmajorgrids=true, grid style=dashed,		
xmin = 10^(1.5), 	xmax = 10^(6.3),						
ymin = 5*10^(-8),	ymax = (1.0)*10^(-1),					
legend style={legend pos=south west, legend cell align=left, fill=none, draw=none, font={\fontsize{9pt}{12pt}\selectfont}}
]
\addplot[blue,mark=diamond,mark size=3.0pt]		table[x=dofs, y=error_primal]{\first};
\addplot[red,mark=square,mark size=2.5pt]		table[x=dofs, y=error_dual]{\first};
\addplot[darkGreen,mark=o,mark size=2.5pt]		table[x=dofs, y=error_product]{\first};
\addplot[teal,mark=triangle,mark size=3.5pt]	table[x=dofs, y=truegerr]{\first};
\addplot[black,solid,domain=10^(1.5):10^(7.8)] { 0.04*x^(-1/4) };
\addplot[black,solid,domain=10^(1.5):10^(7.8)] { 0.0006*x^(-0.55) };
\node at (axis cs:1.0e5,2.5e-4) [anchor=south west] {$\mathcal{O}(N_\ell^{-1/4})$};
\node[black] at (axis cs:1.0e5,7e-8) [anchor=south west] {$\mathcal{O}(N_\ell^{-0.55})$};
\legend{
{$\mu_\ell$ (primal)},
{$\zeta_\ell$ (dual)},
{$\mu_\ell\,\zeta_\ell$},
{$|G(\uref) - G(u_\ell)|$}
}
\end{loglogaxis}
\end{tikzpicture}
\hspace{5pt}
\begin{tikzpicture}
\pgfplotstableread{data/parametric/newex1/thetaX0.5_thetaP0.9_extrarv=2.dat}{\first}
\begin{loglogaxis}
[
width = 8.0cm, height=7cm,									
title={case (i) \,|\, $\overline{M} = 2$},					
xlabel={degree of freedom, $N_\ell$}, 						
ylabel={error estimate},									
ylabel style={font=\tiny,yshift=-1.0ex}, 					
ymajorgrids=true, xmajorgrids=true, grid style=dashed,		
xmin = 10^(1.5), 	xmax = 10^(6.3),						
ymin = 5*10^(-8),	ymax = (1.0)*10^(-1),					
legend style={legend pos=south west, legend cell align=left, fill=none, draw=none, font={\fontsize{9pt}{12pt}\selectfont}}
]
\addplot[blue,mark=diamond,mark size=3.0pt]	table[x=dofs, y=error_primal]{\first};
\addplot[red,mark=square,mark size=2.5pt]	table[x=dofs, y=error_dual]{\first};
\addplot[darkGreen,mark=o,mark size=2.5pt]	table[x=dofs, y=error_product]{\first};
\addplot[teal,mark=triangle,mark size=3.5pt]	table[x=dofs, y=truegerr]{\first};
\addplot[black,solid,domain=10^(1.5):10^(7.8)] { 0.04*x^(-1/4) };
\addplot[black,solid,domain=10^(1.5):10^(7.8)] { 0.0006*x^(-0.55) };
\node at (axis cs:1.0e5,2.5e-4) [anchor=south west] {$\mathcal{O}(N_\ell^{-1/4})$};
\node[black] at (axis cs:1.0e5,7e-8) [anchor=south west] {$\mathcal{O}(N_\ell^{-0.55})$};
\legend{
{$\mu_\ell$ (primal)},
{$\zeta_\ell$ (dual)},
{$\mu_\ell\,\zeta_\ell$},
{$|G(\uref) - G(u_\ell)|$}
}
\end{loglogaxis}
\end{tikzpicture}\caption{
Error estimates $\mu_\ell$, $\zeta_\ell$, $\mu_\ell\,\zeta_\ell$
and the reference error $|G(\uref) - G(u_\ell)|$ at each iteration of
Algorithm~\ref{alg:parametric} with $\thetaX = 0.5$, $\thetaP = 0.9$ (case~(i)) in Experiment~1
(here, $G(\uref) = -3.180377$e-$03$).
}
\label{Ex1:decay:est:05:09}
\end{figure}
\begin{figure}[t!]
\begin{tikzpicture}
\pgfplotstableread{data/parametric/newex1/thetaX0.25_thetaP0.9_extrarv=1.dat}{\first}
\begin{loglogaxis}
[
width = 7.9cm, height=7cm,									
title={case (ii) \,|\, $\overline{M} =1$},					
xlabel={degree of freedom, $N_\ell$}, 						
ylabel={error estimate},									
ylabel style={font=\tiny,yshift=-1.0ex}, 					
ymajorgrids=true, xmajorgrids=true, grid style=dashed,		
xmin = 10^(1.5), 	xmax = 10^(6.3),						
ymin = 5*10^(-8),	ymax = (1.0)*10^(-1),					
legend style={legend pos=south west, legend cell align=left, fill=none, draw=none, font={\fontsize{9pt}{12pt}\selectfont}}
]
\addplot[blue,mark=diamond,mark size=3.0pt]		table[x=dofs, y=error_primal]{\first};
\addplot[red,mark=square,mark size=2.5pt]		table[x=dofs, y=error_dual]{\first};
\addplot[darkGreen,mark=o,mark size=2.5pt]		table[x=dofs, y=error_product]{\first};
\addplot[teal,mark=triangle,mark size=3.5pt]	table[x=dofs, y=truegerr]{\first};
\addplot[black,solid,domain=10^(1.5):10^(7.6)] { 0.04*x^(-1/4) };
\addplot[black,solid,domain=10^(1.5):10^(7.6)] { 0.0006*x^(-0.55) };
\node at (axis cs:1e5,2.5e-4) [anchor=south west] {$\mathcal{O}(N_\ell^{-1/4})$};
\node[black] at (axis cs:1e5,7e-8) [anchor=south west] {$\mathcal{O}(N_\ell^{-0.55})$};
\legend{
{$\mu_\ell$ (primal)},
{$\zeta_\ell$ (dual)},
{$\mu_\ell\,\zeta_\ell$},
{$|G(\uref) - G(u_\ell)|$}
}
\end{loglogaxis}
\end{tikzpicture}
\hspace{5pt}
\begin{tikzpicture}
\pgfplotstableread{data/parametric/newex1/thetaX0.25_thetaP0.9_extrarv=2.dat}{\first}
\begin{loglogaxis}
[
width = 7.9cm, height=7.0cm,								
title={case (ii) \,|\, $\overline{M} = 2$},					
xlabel={degree of freedom, $N_\ell$}, 						
ylabel={error estimate},									
ylabel style={font=\tiny,yshift=-1.0ex}, 					
ymajorgrids=true, xmajorgrids=true, grid style=dashed,		
xmin = 10^(1.5), 	xmax = 10^(6.3),						
ymin = 5*10^(-8),	ymax = (1.0)*10^(-1),					
legend style={legend pos=south west, legend cell align=left, fill=none, draw=none, font={\fontsize{9pt}{12pt}\selectfont}}
]
\addplot[blue,mark=diamond,mark size=3.0pt]		table[x=dofs, y=error_primal]{\first};
\addplot[red,mark=square,mark size=2.5pt]		table[x=dofs, y=error_dual]{\first};
\addplot[darkGreen,mark=o,mark size=2.5pt]		table[x=dofs, y=error_product]{\first};
\addplot[teal,mark=triangle,mark size=3.5pt]	table[x=dofs, y=truegerr]{\first};
\addplot[black,solid,domain=10^(1.5):10^(7.7)] { 0.04*x^(-1/4) };
\addplot[black,solid,domain=10^(1.5):10^(7.7)] { 0.0006*x^(-0.55) };
\node at (axis cs:1e5,2.5e-4) 		[anchor=south west] {$\mathcal{O}(N_\ell^{-1/4})$};
\node[black] at (axis cs:1e5,7e-8) [anchor=south west] {$\mathcal{O}(N_\ell^{-0.55})$};
\legend{
{$\mu_\ell$ (primal)},
{$\zeta_\ell$ (dual)},
{$\mu_\ell\,\zeta_\ell$},
{$|G(\uref) - G(u_\ell)|$}
}
\end{loglogaxis}
\end{tikzpicture}
\caption{
Error estimates $\mu_\ell$, $\zeta_\ell$, $\mu_\ell\,\zeta_\ell$
and the reference error $|G(\uref) - G(u_\ell)|$ at each iteration of
Algorithm~\ref{alg:parametric} with $\thetaX = 0.25$, $\thetaP = 0.9$ (case~(ii)) in Experiment~1
(here, $G(\uref) = -3.180377$e-$03$).
}
\label{Ex1:decay:est:025:09}
\end{figure}

\begin{figure}[t!]
\begin{tikzpicture}
\pgfplotstableread{data/parametric/newex1/thetaX0.5_thetaP0.9_extrarv=1.dat}{\first}
\pgfplotstableread{data/parametric/newex1/thetaX0.5_thetaP0.9_extrarv=2.dat}{\second}
\begin{semilogxaxis}
[
title={case (i): $\thetaX = 0.5$, $\thetaP = 0.9$},	
xlabel={degree of freedom, $N_\ell$}, 				
ylabel={effectivity index, $\Theta_\ell$},			
ylabel style={font=\tiny,yshift=-2.5ex}, 			
ymajorgrids=true, xmajorgrids=true, grid style=dashed,		
xmin = (9)*10^1, 	xmax = (5)*10^5,				
ymin = 6,			ymax = 14.5,						
ytick={7,10,12,14},								
width = 8.0cm, height=6.0cm,
legend style={legend pos=north east, legend cell align=left, fill=none, draw=none, font={\fontsize{9pt}{12pt}\selectfont}}
]
\addplot[blue,mark=o,mark size=2.5pt]		table[x=dofs, y=effindices]{\first};
\addplot[red,mark=triangle,mark size=3.0pt]	table[x=dofs, y=effindices]{\second};
\legend{
{$\overline{M} = 1$},
{$\overline{M} = 2$}
}
\end{semilogxaxis}
\end{tikzpicture}
\hspace{5pt}
\begin{tikzpicture}
\pgfplotstableread{data/parametric/newex1/thetaX0.25_thetaP0.9_extrarv=1.dat}{\first}
\pgfplotstableread{data/parametric/newex1/thetaX0.25_thetaP0.9_extrarv=2.dat}{\second}
\begin{semilogxaxis}
[
title={case (ii): $\thetaX = 0.25$, $\thetaP = 0.9$},
xlabel={degree of freedom, $N_\ell$}, 				
ylabel={effectivity index, $\Theta_\ell$},			
ylabel style={font=\tiny,yshift=-2.5ex}, 			
ymajorgrids=true, xmajorgrids=true, grid style=dashed,		
xmin = (9)*10^1, 	xmax = (5)*10^5,				
ymin = 6,			ymax = 14.5,					
ytick={7,10,12,14},									
width = 8.0cm, height=6.0cm,
legend style={legend pos=north east, legend cell align=left, fill=none, draw=none, font={\fontsize{9pt}{12pt}\selectfont}}
]
\addplot[blue,mark=o,mark size=2.5pt]		table[x=dofs, y=effindices]{\first};
\addplot[red,mark=triangle,mark size=3.0pt]	table[x=dofs, y=effindices]{\second};
\legend{
{$\overline{M} = 1$},
{$\overline{M} = 2$}
}
\end{semilogxaxis}
\end{tikzpicture}
\caption{
The effectivity indices for the goal-oriented error estimates in Experiment~1
at each iteration of Algorithm~\ref{alg:parametric}.
}
\label{Ex1:effindices}
\end{figure}

In Figure~\ref{Ex1:decay:est:025:09}, we plot three error 
estimates 
as well as the reference error in the goal functional in case~(ii).  
We observe that $\mu_\ell\zeta_\ell$ 
decay with about the same overall rate as in case~(i), i.e., $\mathcal{O}(N^{-0.55})$. 
On the other hand, the ``smoothing'' effect due to a larger $\overline{M}$ is less evident in case (ii),
compared to case~(i). This is likely due to a smaller value of the (spatial) 
marking parameter $\thetaX$ in case~(ii), which provides a more balanced refinement of 
spatial and parametric components of the generated Galerkin approximations
(note that the (parametric) marking parameter $\thetaP$ is the same in both cases).

Finally, for all cases considered in this experiment, we compute
the effectivity indices as explained in \S\ref{sec:experiments:outline}; see~\eqref{eff:indices}.
Here, we employ a reference Galerkin solution computed
using the triangulation $\TT_{\rm ref}$ 
obtained by a uniform refinement of $\TT_L$ from case~(i) with $\overline{M}=1$ and 
a large index set $\gotP_{\rm ref}$ which includes all indices generated in this experiment.
The effectivity indices are plotted in Figure~\ref{Ex1:effindices}.
Overall, they oscillate within the interval $(7.0,\,14.0)$ in all cases.

\begin{figure}[b!]
\centering
\footnotesize
	\includegraphics[scale=0.36]{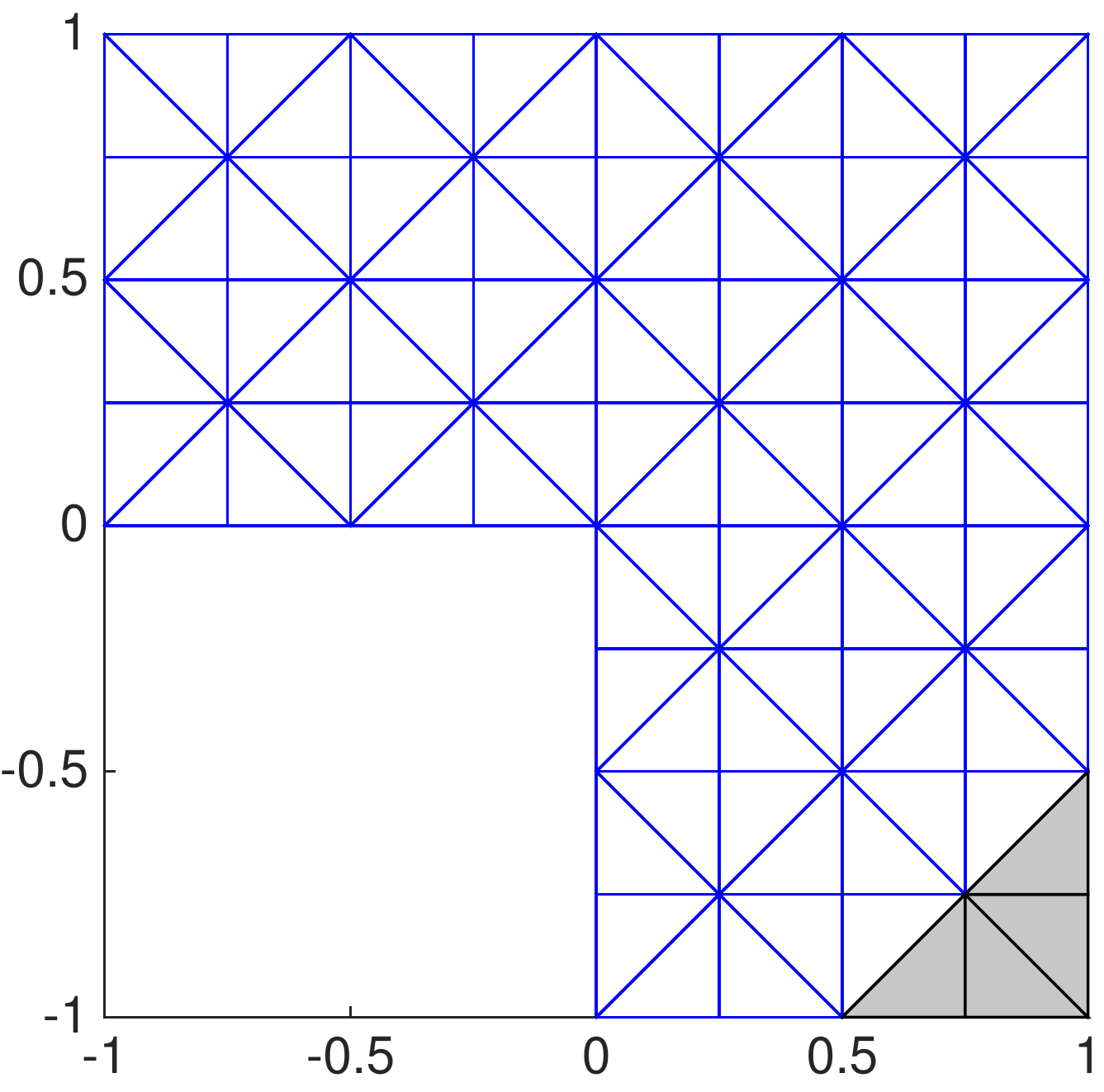} 
	\hspace{0.6cm} 
	\includegraphics[scale=0.36]{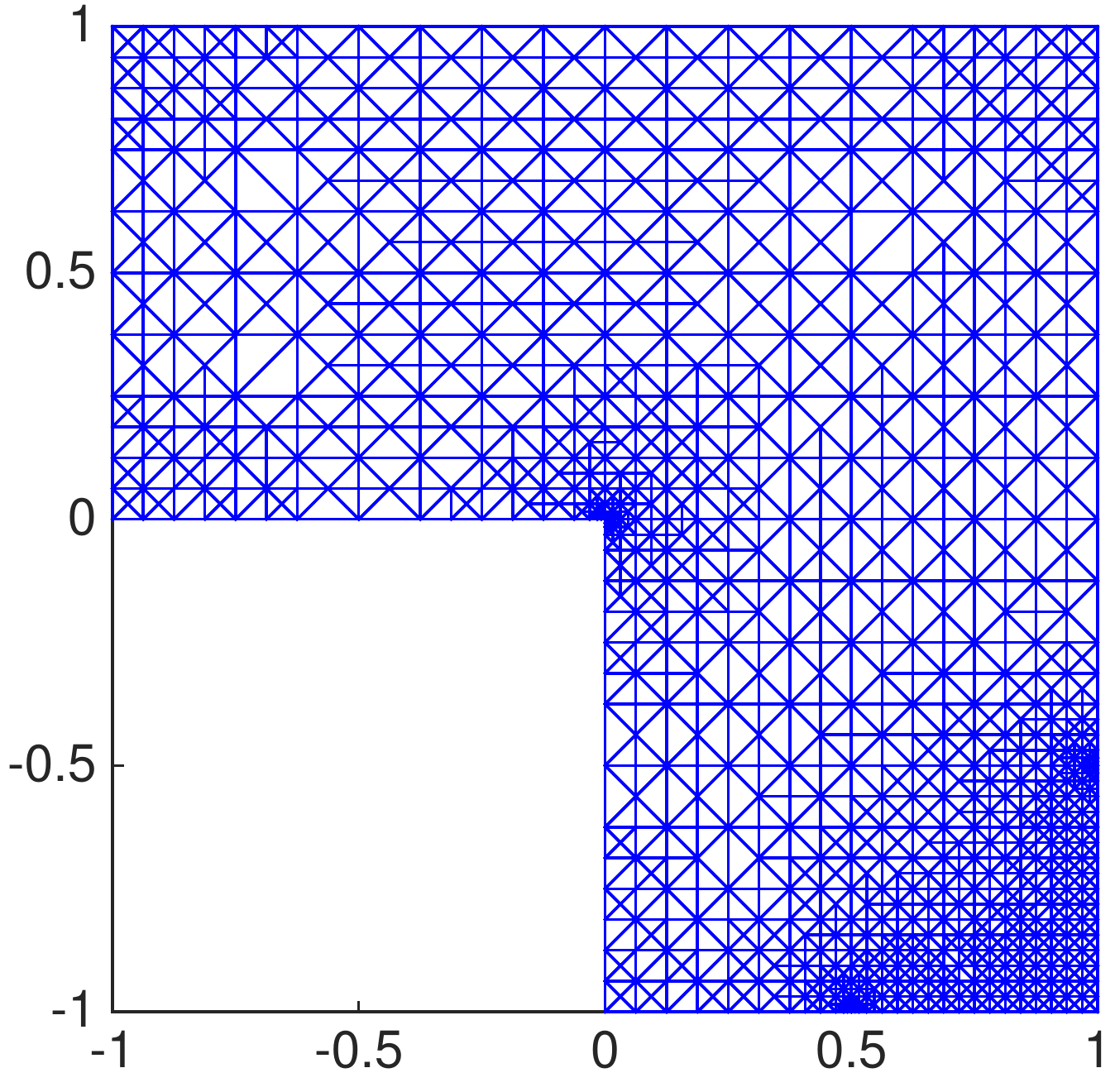}
\caption{
Initial triangulation $\TT_0$ 
with the shaded triangle $T_g$ (left) and the triangulation 
generated by the goal-oriented adaptive algorithm 
for an intermediate tolerance (right).
}
\label{Ex2:meshes}
\end{figure}
\subsection{Experiment~2} 
%
In this experiment, we consider the parametric model problem~\eqref{eq:strongform} posed 
on the L-shaped domain $D = (-1,1)^2 \setminus (-1,0]^2$ and we choose the parametric coefficient $a(x,\y)$ 
as the one introduced in~\cite[Section~11.1]{egsz14}.
Let $\sigma > 1$ and $0 < A < 1/\zeta(\sigma)$, where $\zeta$ denotes the Riemann zeta function.
For every $x = (x_1,x_2) \in D$, we fix $a_0(x) := 1$ and choose the coefficients $a_m(x)$ 
in~\eqref{eq1:a} to represent planar Fourier modes of increasing total order: 
\begin{equation} \label{Eigel:coeff}
a_m(x) := \alpha_m \cos(2\pi\beta_1(m) \, x_1) \cos(2\pi\beta_2(m)\, x_2)
\quad \text{for all $m \in \N$},
\end{equation}
where $\alpha_m := A m^{-\sigma}$ are the amplitudes of the coefficients and $\beta_1$, $\beta_2$ are defined as
\[
\beta_1(m) := m - k(m)(k(m) + 1)/2\ \ \hbox{and}\ \
\beta_2(m) := k(m) - \beta_1(m),
\]
with $k(m) := \lfloor -1/2  + \sqrt{1/4+2m}\rfloor$ for all $m \in \N$. 
Note that under these assumptions, both conditions \eqref{eq2:a} and \eqref{eq3:a} are satisfied
with $\azeromin = \azeromax = 1$ and $\tau = A \zeta(\sigma)$, respectively.

We assume that the parameters $y_m$ in \eqref{eq1:a} are the images of uniformly distributed independent 
mean-zero random variables on $\Gamma_m = [-1,1]$, so that $d\pi_m =~dy_m/2$ for all $m \in \N$.
Then, the orthonormal polynomial basis in $L^2_{\pi_m}(\Gamma_m)$ is comprised of scaled Legendre polynomials.
Note that the same parametric coefficient as described above
was also used in numerical experiments in~\cite{egsz15, bs16, em16, eps17, br18}.

In this experiment, we choose the quantity of interest that
involves the average value of a directional derivative of the primal solution
over a small region of the domain away from the reentrant corner.
More precisely, we set $f_0 = 1$, $\ff = (0,0)$, $g_0 = 0$, and $\gg = (\rchi_{T_g},0)$, 
where $\rchi_{T_g}$ denotes the characteristic function of the triangle 
$T_g := \text{conv}\{(1/2,-1), (1,-1), (1,-1/2)\}$ (see Figure~\ref{Ex2:meshes} (left)), 
so that the functionals in~\eqref{rhs:primal:problem}--\eqref{rhs:dual:problem} read as
\begin{equation*}
F(v) = 
\int_\Gamma \int_D v(x,\y) \, dx \, \dpiy,
\quad
G(v) = 
-\int_\Gamma \int_{T_g} \frac{\partial v}{\partial x_1}(x,\y) \, dx \, \dpiy	
\quad \text{for all } v \in V.
\end{equation*}

\begin{figure}[t!]
\begin{tikzpicture}
\pgfplotstableread{data/parametric/newex2/compar_adaptivity/thetaX0.0_thetaP1.0.dat}{\zeroone}
\pgfplotstableread{data/parametric/newex2/compar_adaptivity/thetaX1.0_thetaP0.0.dat}{\onezero}
\pgfplotstableread{data/parametric/newex2/compar_adaptivity/thetaX1.0_thetaP1.0_longer3.dat}{\oneone}
\pgfplotstableread{data/parametric/newex2/compar_adaptivity/thetaX0.6_thetaP1.0_longer.dat}{\zerosixone}
\pgfplotstableread{data/parametric/newex2/compar_adaptivity/thetaX1.0_thetaP0.6_longer.dat}{\onezerosix}
\pgfplotstableread{data/parametric/newex2/compar_adaptivity/thetaX0.2_thetaP0.8_longer.dat}{\zerotwozeroeight}
\begin{loglogaxis}
[
width = 10cm, height=7cm,									
xlabel={degree of freedom, $N_\ell$},						
ylabel={error estimate, $\mu_\ell\,\zeta_\ell$},			
ylabel style={font=\tiny,yshift=-1.0ex},					
ymajorgrids=true, xmajorgrids=true, grid style=dashed,		
xmin = (8)*10^(0), 	xmax = (3)*10^(7), 
ymin = (3)*10^(-6), ymax = (1.5)*10^(-2),					
legend style={legend pos=south west, legend cell align=left, fill=none, draw=none, font={\fontsize{9pt}{12pt}\selectfont}}
]
\addplot[myViolet,mark=diamond,mark size=3.5pt]	table[x=dofs, y=error_product]{\zeroone};
\addplot[myGray,mark=asterisk,mark size=3.0pt]	table[x=dofs, y=error_product]{\onezero};
\addplot[red,mark=o,mark size=2.5pt]			table[x=dofs, y=error_product]{\oneone};
\addplot[myOrange,mark=pentagon,mark size=3.0pt]table[x=dofs, y=error_product]{\zerosixone};
\addplot[blue,mark=square,mark size=2.0pt]		table[x=dofs, y=error_product]{\onezerosix};
\addplot[teal,mark=triangle,mark size=3.5pt]	table[x=dofs, y=error_product]{\zerotwozeroeight};
%
\node[diamond,fill=myViolet,fill opacity=0.3,scale=0.4] at (axis cs:165,0.008222426) {};
\node[diamond,fill=myViolet,fill opacity=0.3,scale=0.4] at (axis cs:462,0.008233172) {};
\node[diamond,fill=myViolet,fill opacity=0.3,scale=0.4] at (axis cs:1386,0.008234037) {};
\node[diamond,fill=myViolet,fill opacity=0.3,scale=0.4] at (axis cs:4356,0.008235500) {};
\node[diamond,fill=myViolet,fill opacity=0.3,scale=0.4] at (axis cs:14157,0.008235698) {};
%
\node[regular polygon,regular polygon sides=4,fill=blue,fill opacity=0.3,scale=0.4] at (axis cs:5892,0.000454121) {};
\node[regular polygon,regular polygon sides=4,fill=blue,fill opacity=0.3,scale=0.4] at (axis cs:145926,0.000062051) {};
\node[regular polygon,regular polygon sides=4,fill=blue,fill opacity=0.3,scale=0.4] at (axis cs:437769,0.000033884) {};
\node[regular polygon,regular polygon sides=4,fill=blue,fill opacity=0.3,scale=0.4] at (axis cs:2151435,0.000013762) {};
\node[regular polygon,regular polygon sides=4,fill=blue,fill opacity=0.3,scale=0.4] at (axis cs:5490702,0.000007713) {};
%
\node[regular polygon,regular polygon sides=5,fill=myOrange,fill opacity=0.3,scale=0.4] at (axis cs:2340,0.000563223) {};
\node[regular polygon,regular polygon sides=5,fill=myOrange,fill opacity=0.3,scale=0.4] at (axis cs:65436,0.000061840) {};
\node[regular polygon,regular polygon sides=5,fill=myOrange,fill opacity=0.3,scale=0.4] at (axis cs:744702,0.000018773) {};
\node[regular polygon,regular polygon sides=5,fill=myOrange,fill opacity=0.3,scale=0.4] at (axis cs:5667420,0.000008367) {};
%
\node[circle,fill=red,fill opacity=0.3,scale=0.5] at (axis cs:7365,0.000436295) {};
\node[circle,fill=red,fill opacity=0.3,scale=0.5] at (axis cs:340494,0.000045785) {};
\node[circle,fill=red,fill opacity=0.3,scale=0.5] at (axis cs:8214570,0.000011227) {};
%
\node[regular polygon,regular polygon sides=3,fill=teal,fill opacity=0.3,scale=0.3] at (axis cs:708,0.001260268) {};
\node[regular polygon,regular polygon sides=3,fill=teal,fill opacity=0.3,scale=0.3] at (axis cs:6300,0.000240472) {};
\node[regular polygon,regular polygon sides=3,fill=teal,fill opacity=0.3,scale=0.3] at (axis cs:21170,0.000120506) {};
\node[regular polygon,regular polygon sides=3,fill=teal,fill opacity=0.3,scale=0.3] at (axis cs:104145,0.000037954) {};
\node[regular polygon,regular polygon sides=3,fill=teal,fill opacity=0.3,scale=0.3] at (axis cs:261560,0.000020747) {};
\node[regular polygon,regular polygon sides=3,fill=teal,fill opacity=0.3,scale=0.3] at (axis cs:816648,0.000009197) {};
\legend{
{$\thetaX=0$, $\thetaP=1$},
{$\thetaX=1$, $\thetaP=0$},
{$\thetaX=1$, $\thetaP=1$},
{$\thetaX=0.6$, $\thetaP=1$},
{$\thetaX=1$, $\thetaP=0.6$},
{$\thetaX=0.2$, $\thetaP=0.8$},
}
\end{loglogaxis}
\end{tikzpicture}
\caption{
Error estimates $\mu_\ell\,\zeta_\ell$ at each iteration of Algorithm~\ref{alg:parametric}
for different sets of marking parameters in Experiment~2. Filled markers indicate iterations at which 
parametric enrichments occur.
}
\label{Ex2:compar:adaptivity}
\end{figure}
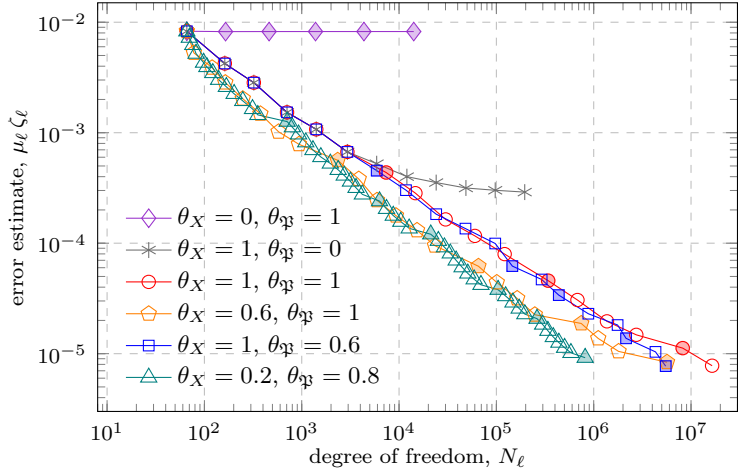

Note that in this example,
the primal and dual solutions both exhibit a geometric singularity at the reentrant corner
(see the left plots in Figures~\ref{Ex2:primal:sol:var} and~\ref{Ex2:dual:sol:var}).
In addition, the dual solution exhibits also a singularity along the line 
connecting the points $(1/2,-1)$ and $(1,-1/2)$
(see the left plot in Figure~\ref{Ex2:dual:sol:var});
the latter singularity is due to a low regularity of the goal functional $G(v)$.

Our first aim in this experiment is to show the advantages of using adaptivity in \emph{both}
components of Galerkin approximations.
To this end, we consider the expansion coefficients in~\eqref{Eigel:coeff} with 
slow ($\sigma = 2$) decay of the amplitudes $\alpha_m$ (fixing $\tau =~A \zeta(\sigma) = 0.9$, 
this results in $A \approx 0.547$) and we choose $\overline{M} = 1$ in \eqref{finite:index:set:Q}.
Starting with the coarse triangulation $\TT_0$ depicted in Figure~\ref{Ex2:meshes} (left)
and setting the tolerance to $\mathsf{tol} = 1$e-$05$, we run Algorithm~\ref{alg:parametric}
for six different sets of marking parameters and plot the error estimates $\mu_\ell\,\zeta_\ell$ computed at each iteration;
see Figure~\ref{Ex2:compar:adaptivity}.

In the cases where only one component of the Galerkin approximation is enriched
(i.e., either $\thetaX=0$ or $\thetaP=0$ as in the first two sets of parameters in Figure~\ref{Ex2:compar:adaptivity}),
the error estimates $\mu_\ell\,\zeta_\ell$ quickly stagnate as iterations progress,
and the set tolerance cannot be reached.
If both components are enriched but no adaptivity is used
(i.e., $\thetaX=\thetaP=1$, see the third set of parameters in Figure~\ref{Ex2:compar:adaptivity}), then 
the error estimates decay throughout all iterations.
However, in this case, the overall decay rate is slow and eventually deteriorates due to
the number of degrees of freedom growing very fast,
in particular, during the iterations with parametric enrichments (see the filled circle markers in Figure~\ref{Ex2:compar:adaptivity}).
The deterioration of the decay rate is also
observed for the fourth set of marking parameters in Figure~\ref{Ex2:compar:adaptivity} ($\thetaX=0.6$, $\thetaP=1$),
where adaptivity is only used for enhancing the spatial component of approximations.
If adaptivity is only used for enriching the parametric component
(e.g., $\thetaX=1$ and $\thetaP=0.6$ as in the fifth set in Figure~\ref{Ex2:compar:adaptivity}),
then the error estimates decay throughout all iterations without deterioration of the rate.
However, the decay rate in this case is slower than the one
for the sixth set of marking parameters $\thetaX = 0.2$, $\thetaP = 0.8$,
where adaptivity is used for both components of Galerkin approximations.
Thus, we conclude, that for the same level of accuracy,
adaptive enrichment in \emph{both} components
provides more balanced approximations with less degrees of freedom and
leads to a faster convergence rate than in all other cases considered in this experiment.

\begin{figure}[t!]
\centering
\footnotesize
	\includegraphics[scale=0.36]{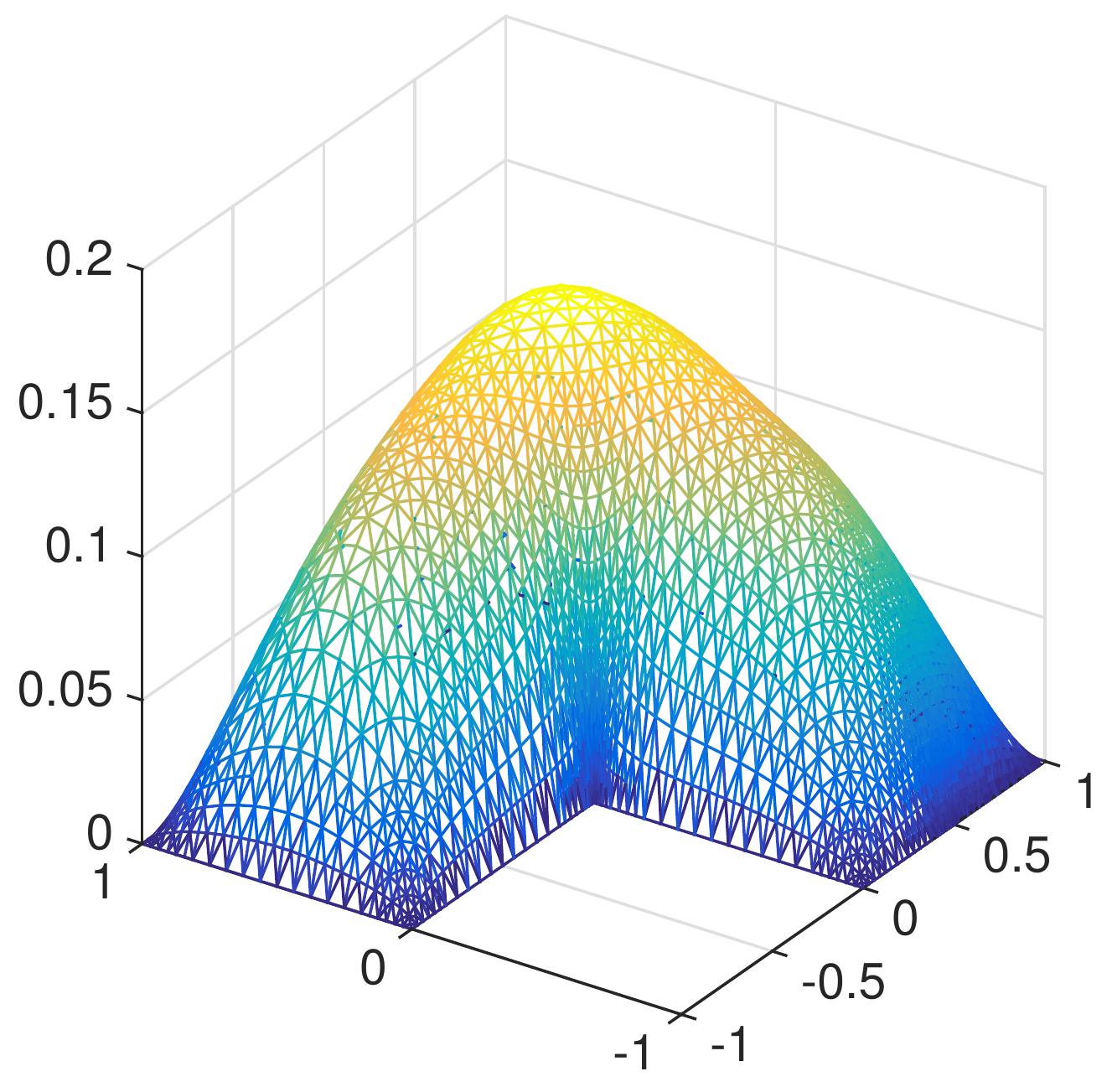} 
	\hspace{0.6cm}
	\includegraphics[scale=0.36]{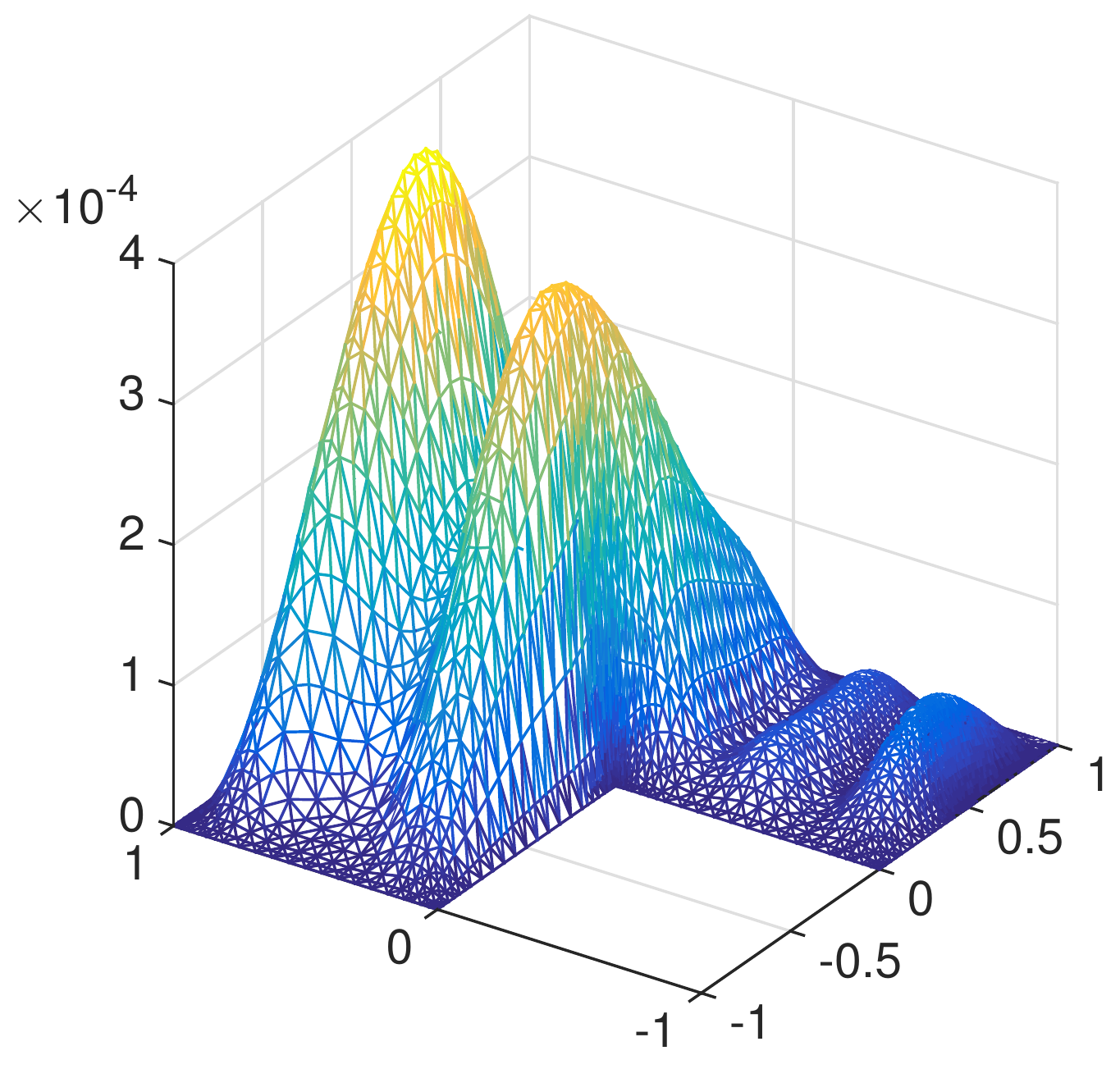}
\caption{
The mean field (left) and the variance (right) of the primal Galerkin solution in Experiment~2.
}
\label{Ex2:primal:sol:var}
\end{figure}
\begin{figure}[t!]
\centering
\footnotesize
	\includegraphics[scale=0.36]{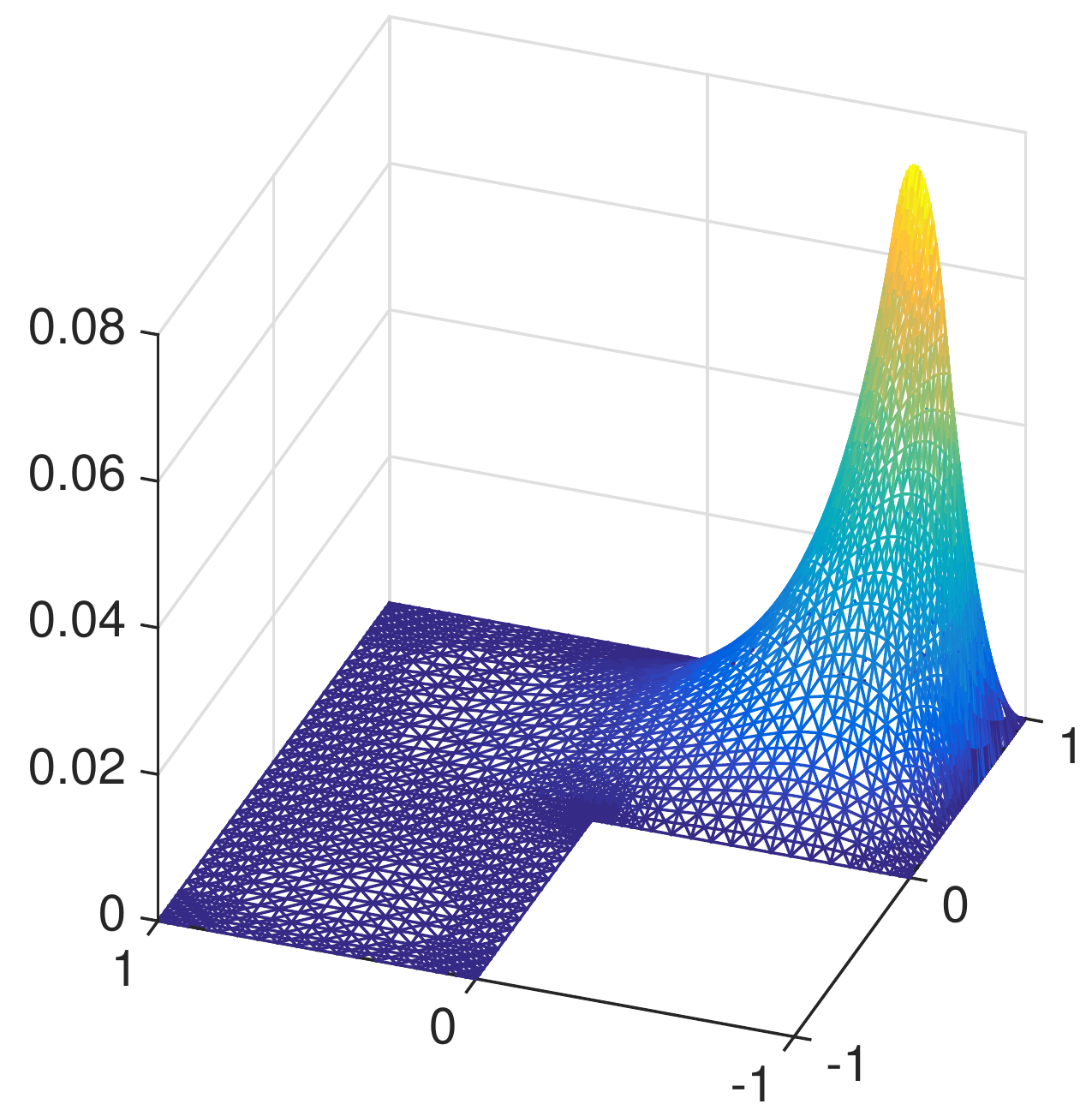} \hspace{0.6cm}
	\includegraphics[scale=0.36]{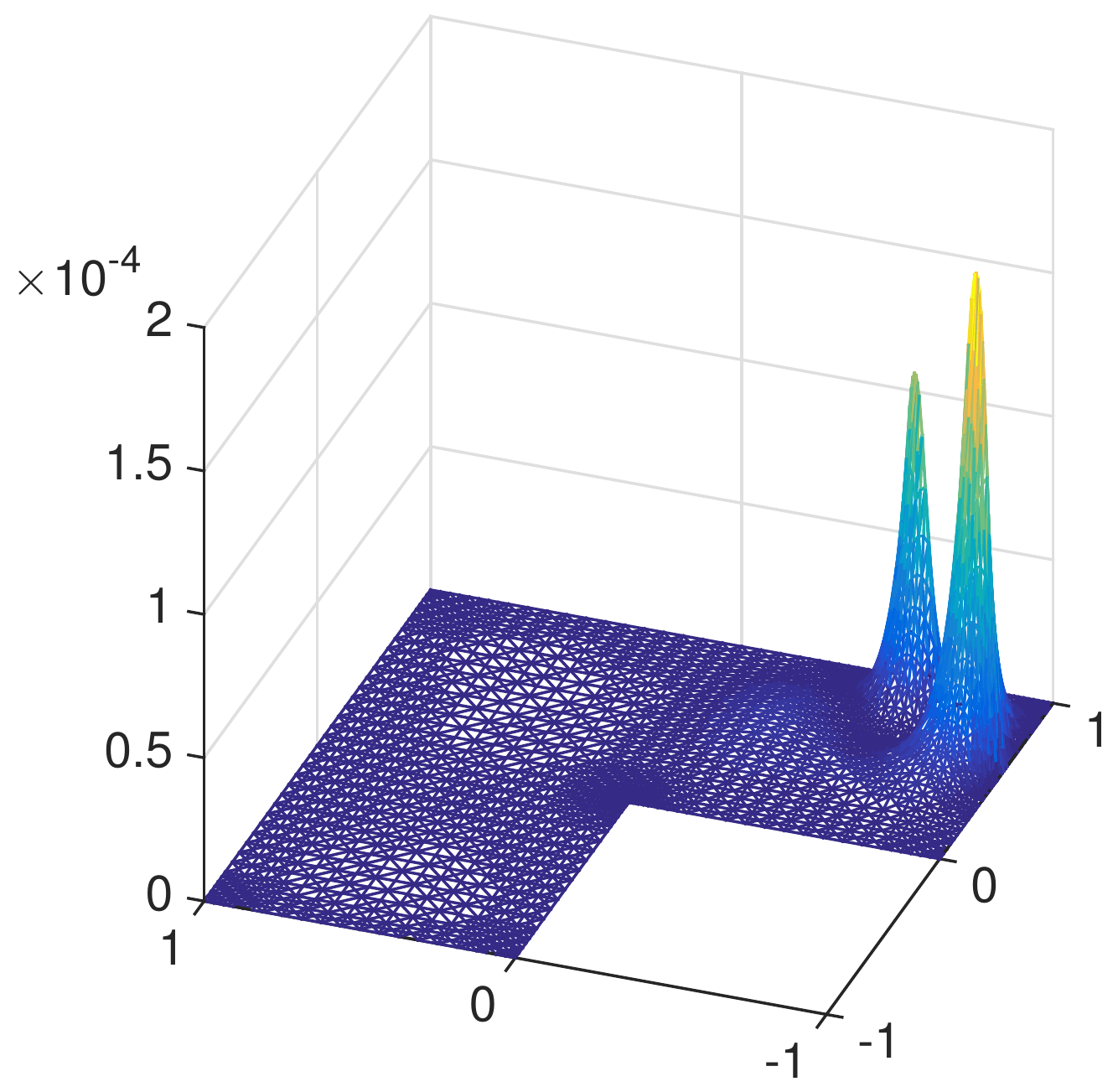}
\caption{
The mean field (left) and the variance (right) of the dual Galerkin solution in Experiment~2.
}
\label{Ex2:dual:sol:var}
\end{figure}

\begin{table}[t!]
\setlength\tabcolsep{8.5pt} 
\begin{center} 
\smallfontthree{ 
\renewcommand{\arraystretch}{1.3}
\begin{tabular}{r !{\vrule width 1.0pt} c c | c c !{\vrule width 1.0pt} c c | c c }
\noalign{\hrule height 1.0pt}
%
& \multicolumn{4}{c!{\vrule width 1.0pt}}{$\hbox{case (i):} \;\;\thetaX = 0.3,\  \thetaP = 0.8$}
& \multicolumn{4}{c}{$\hbox{case (ii):} \;\;\thetaX = 0.15,\ \thetaP = 0.95$}\\
\cline{2-9} 
&\multicolumn{2}{c|}{$\sigma = 2$} 
&\multicolumn{2}{c!{\vrule width 1.0pt}}{$\sigma = 4$}
&\multicolumn{2}{c|}{$\sigma = 2$} 
&\multicolumn{2}{c}{$\sigma = 4$}\\
\hline
$L$						&\multicolumn{2}{c|}{$37$}
						&\multicolumn{2}{c!{\vrule width 1.0pt}}{$37$}	
						&\multicolumn{2}{c|}{$62$}
						&\multicolumn{2}{c}{$63$}\\[-3pt]

$\mu_L\zeta_L$			&\multicolumn{2}{c|}{$8.8378\text{e-}06$}	
						&\multicolumn{2}{c!{\vrule width 1.0pt}}{$9.4638\text{e-}06$}	
						&\multicolumn{2}{c|}{$9.6811\text{e-}06$}
						&\multicolumn{2}{c}{$9.1013\text{e-}06$}\\[-3pt]

$t$ (sec) 				&\multicolumn{2}{c|}{$372$}	
						&\multicolumn{2}{c!{\vrule width 1.0pt}}{$345$}	
						&\multicolumn{2}{c|}{$604$}	
						&\multicolumn{2}{c}{$602$}\\[-3pt]
				
$N_{\rm total}$			&\multicolumn{2}{c|}{$3,507,551$}	
						&\multicolumn{2}{c!{\vrule width 1.0pt}}{$2,260,897$}	
						&\multicolumn{2}{c|}{$6,617,365$}	
						&\multicolumn{2}{c}{$4,289,136$}\\[-3pt]
					
$N_L$					&\multicolumn{2}{c|}{$725,800$}	
						&\multicolumn{2}{c!{\vrule width 1.0pt}}{$458,568$}	
						&\multicolumn{2}{c|}{$789,670$}	
						&\multicolumn{2}{c}{$552,857$}\\[-3pt]		

$\#\TT_L$				&\multicolumn{2}{c|}{$73,393$}
						&\multicolumn{2}{c!{\vrule width 1.0pt}}{$77,249$}
						&\multicolumn{2}{c|}{$55,183$}
						&\multicolumn{2}{c}{$65,800$}\\[-3pt]
																					
$\#\NN_L$				&\multicolumn{2}{c|}{$36,290$}
						&\multicolumn{2}{c!{\vrule width 1.0pt}}{$38,214$}	
						&\multicolumn{2}{c|}{$27,230$}	
						&\multicolumn{2}{c}{$32,521$}\\[-3pt]
				
$\#\gotP_L$				&\multicolumn{2}{c|}{$20$} 	
						&\multicolumn{2}{c!{\vrule width 1.0pt}}{$12$}	
						&\multicolumn{2}{c|}{$29$}	
						&\multicolumn{2}{c}{$17$}\\[-3pt]
				
$M^\mathrm{active}_L$	&\multicolumn{2}{c|}{$6$} 	
						&\multicolumn{2}{c!{\vrule width 1.0pt}}{$3$}	
						&\multicolumn{2}{c|}{$6$}	
						&\multicolumn{2}{c}{$4$}\\[1pt]
 
\hline 
\multicolumn{9}{l}{Evolution of the index set}\\
\hline

$\gotP_\ell$	
&$\ell=0$	&$(0\ 0)$				&$\ell=0$	&$(0\ 0)$		&$\ell=0$	&$(0\ 0)$			&$\ell=0$	&$(0\ 0)$\\[-4pt]       
&			&$(1\ 0)$				&		  	&$(1\ 0)$		&			&$(1\ 0)$			&			&$(1\ 0)$\\
								
&$\ell=12$	&$(0\ 1)$				&$\ell=8$	&$(2\ 0)$		&$\ell=14$	&$(0\ 1)$			&$\ell=10$	&$(2\ 0)$\\[-4pt]          
&			&$(2\ 0)$				&		   	&				&			&$(2\ 0)$			&			&\\
							
&$\ell=19$	&$(0\ 0\ 1)$			&$\ell=15$	&$(3\ 0)$		&$\ell=26$	&$(0\ 0\ 1)$		&$\ell=21$	&$(0\ 1)$\\[-4pt]
&			&$(1\ 1\ 0)$			&		  	&				&           &$(1\ 1\ 0)$     	&			&$(3\ 0)$\\[-4pt]
&			&						&		  	&				&           &$(3\ 0\ 0)$     	&			&\\

&$\ell=23$	&$(0\ 0\ 0\ 1)$			&$\ell=20$	&$(0\ 1)$		&$\ell=37$	&$(0\ 0\ 0\ 1)$		&$\ell=32$	&$(1\ 1)$\\[-4pt]  
& 			&$(1\ 0\ 1\ 0)$			&		  	&$(4\ 0)$		&			&$(0\ 2\ 0\ 0)$		&			&$(4\ 0)$\\[-4pt]
&			&$(2\ 1\ 0\ 0)$			&		  	&				&			&$(1\ 0\ 1\ 0)$     &			&\\[-4pt]
&			&$(3\ 0\ 0\ 0)$			&$\ell=26$	&$(1\ 1)$		&			&$(2\ 1\ 0\ 0)$     &$\ell=40$	&$(0\ 0\ 1)$\\[-4pt]
&			&						&		  	&$(5\ 0)$		&			&$(4\ 0\ 0\ 0)$     &			&$(2\ 1\ 0)$\\[-4pt]
&$\ell=30$	&$(0\ 0\ 0\ 0\ 1)$		&		  	&				&			&				    &			&$(5\ 0\ 0)$\\[-4pt]
&			&$(0\ 2\ 0\ 0\ 0)$		&			&				&$\ell=46$	&$(0\ 0\ 0\ 0\ 1)$	&			&\\[-4pt]
&			&$(1\ 0\ 0\ 1\ 0)$		&$\ell=30$	&$(2\ 1)$		&			&$(0\ 1\ 1\ 0\ 0)$	&$\ell=51$	&$(1\ 0\ 1)$\\[-4pt]
&			&$(2\ 0\ 1\ 0\ 0)$		&		  	&$(6\ 0)$		&			&$(1\ 2\ 0\ 0\ 0)$	&			&$(3\ 1\ 0)$\\[-4pt]
&			&$(3\ 1\ 0\ 0\ 0)$		&		  	&				&			&$(2\ 0\ 1\ 0\ 0)$	&			&$(6\ 0\ 0)$\\[-4pt]
&			&						&		  	&				&			&$(3\ 1\ 0\ 0\ 0)$	&			&\\
                                         
&$\ell=34$	&$(0\ 0\ 0\ 0\ 0\ 1)$	&$\ell=35$	&$(0\ 0\ 1)$	&$\ell=53$&$(0\ 0\ 0\ 0\ 0\ 1)$	&$\ell=60$	&$(0\ 0\ 0\ 1)$\\[-4pt]          
&			&$(0\ 1\ 1\ 0\ 0\ 0)$	&			&$(3\ 1\ 0)$	&      	&$(0\ 1\ 0\ 0\ 1\ 0)$	&			&$(2\ 0\ 1\ 0)$\\[-4pt] 
&			&$(1\ 0\ 0\ 0\ 1\ 0)$	&			&				&		&$(0\ 1\ 0\ 1\ 0\ 0)$	&			&$(4\ 1\ 0\ 0)$\\[-4pt]
&			&$(1\ 2\ 0\ 0\ 0\ 0)$	&			&				&		&$(1\ 0\ 0\ 0\ 0\ 1)$	&			&$(7\ 0\ 0\ 0)$\\[-4pt] 
&			&$(4\ 0\ 0\ 0\ 0\ 0)$	&			&				&		&$(1\ 0\ 0\ 0\ 1\ 0)$	&			&\\[-4pt]
&			&						&			&				&		&$(1\ 1\ 1\ 0\ 0\ 0)$	&			&\\[-4pt]
&			&						&			&				&		&$(2\ 0\ 0\ 1\ 0\ 0)$	&			&\\[-4pt]
&			&						&			&				&		&$(2\ 2\ 0\ 0\ 0\ 0)$	&			&\\[-4pt]
&			&						&			&				&		&$(3\ 0\ 1\ 0\ 0\ 0)$	&			&\\[-4pt]
&			&						&			&				&		&$(4\ 1\ 0\ 0\ 0\ 0)$	&			&\\[-4pt]
&			&						&			&				&		&$(5\ 0\ 0\ 0\ 0\ 0)$	&			&\\
\noalign{\hrule height 1.0pt}
\end{tabular}
\vspace{8pt}
\caption{
The outputs obtained by running Algorithm~\ref{alg:parametric} in Experiment~2 with 
$\thetaX = 0.3$, $\thetaP = 0.8$ (case~(i)) and $\thetaX = 0.15$, $\thetaP = 0.95$ (case~(ii)) 
for both slow ($\sigma=2$) and fast ($\sigma=4$) decay of the amplitude coefficients.
}
\label{Ex2:table}       
} 
\end{center}                                                                   
\end{table}

Let us now run Algorithm~\ref{alg:parametric} with the following two sets of marking parameters:
(i)~$\thetaX = 0.3$, $\thetaP = 0.8$; (ii)~$\thetaX = 0.15$, $\thetaP = 0.95$. 
In each case, we consider the expansion coefficients in~\eqref{Eigel:coeff} with 
slow ($\sigma = 2$) 
and fast ($\sigma = 4$) decay of the amplitudes $\alpha_m$ 
(in the latter case, fixing $\tau =~A \zeta(\sigma) = 0.9$ results in $A \approx 0.832$). 
In all computations, we choose $\overline{M} = 1$ in \eqref{finite:index:set:Q}
and set the tolerance to $\mathsf{tol} = 1$e-$05$. 

Figure~\ref{Ex2:meshes} (right) depicts an adaptively refined triangulation produced by 
Algorithm~\ref{alg:parametric} in case~(i) for the problem with slow decay of the amplitude coefficients
(similar triangulations were obtained in other cases). 
Observe that the triangulation effectively captures spatial features of primal and dual solutions.
Indeed, it is refined in the vicinity of the reentrant corner
and, similarly to Experiment~1,  in the vicinity of points $(1/2,-1)$ and $(1,-1/2)$.

\begin{figure}[t!]
\begin{tikzpicture}
\pgfplotstableread{data/parametric/newex2/slow_decay/thetaX0.30_thetaP0.80.dat}{\first}
\begin{loglogaxis}
[
title={case~(i) \,|\, $\sigma = 2$},								
xlabel={degree of freedom, $N_\ell$},								
ylabel={error estimate},											
ylabel style={font=\tiny,yshift=-1.0ex},							
ymajorgrids=true, xmajorgrids=true, grid style=dashed,				
xmin = 10^(1.5), 	xmax = 10^(6.4),								
ymin = 7*10^(-7),	ymax = 3*10^(-1),								
width = 8.0cm, height=7.0cm,
legend style={legend pos=south west, legend cell align=left, fill=none, draw=none, font={\fontsize{9pt}{12pt}\selectfont}}
]
\addplot[blue,mark=diamond,mark size=3.0pt]	table[x=dofs, y=error_primal]{\first};
\addplot[red,mark=square,mark size=2.5pt]	table[x=dofs, y=error_dual]{\first};
\addplot[darkGreen,mark=o,mark size=2.5pt]	table[x=dofs, y=error_product]{\first};
\addplot[teal,mark=triangle,mark size=3.5pt]table[x=dofs, y=truegerr]{\first};
\addplot[black,solid,domain=10^(1.5):10^(6.5)] { 1.3*x^(-0.35) };
\addplot[black,solid,domain=10^(1.5):10^(6.5)] { 0.3*x^(-2/3) };
\node at (axis cs:1e5,2e-2) [anchor=south west] {$\mathcal{O}(N_\ell^{-0.35})$};
\node at (axis cs:1e5,9e-5) [anchor=south west] {$\mathcal{O}(N_\ell^{-2/3})$};
\legend{
{$\mu_\ell$ (primal)},
{$\zeta_\ell$ (dual)},
{$\mu_\ell\,\zeta_\ell$},
{$|G(u_{\rm ref}) - G(u_\ell)|$}
}
\end{loglogaxis}
\end{tikzpicture}
\begin{tikzpicture}
\pgfplotstableread{data/parametric/newex2/fast_decay/thetaX0.30_thetaP0.80.dat}{\first}
\begin{loglogaxis}
[
title={case~(i) \,|\, $\sigma = 4$},						
xlabel={degree of freedom, $N_\ell$}, 						
ylabel={error estimate},									
ylabel style={font=\tiny,yshift=-1.0ex}, 					
ymajorgrids=true, xmajorgrids=true, grid style=dashed,		
xmin = 10^(1.5), 	xmax = 10^(6.3),						
ymin = 7*10^(-7),	ymax = 3*10^(-1),						
width = 8.0cm, height=7.0cm,
legend style={legend pos=south west, legend cell align=left, fill=none, draw=none, font={\fontsize{9pt}{12pt}\selectfont}}
]
\addplot[blue,mark=diamond,mark size=3.0pt]	table[x=dofs, y=error_primal]{\first};
\addplot[red,mark=square,mark size=2.5pt]	table[x=dofs, y=error_dual]{\first};
\addplot[darkGreen,mark=o,mark size=2.5pt]	table[x=dofs, y=error_product]{\first};
\addplot[teal,mark=triangle,mark size=3.5pt]table[x=dofs, y=truegerr]{\first};
\addplot[black,solid,domain=10^(1.5):10^(6.5)] { 1.3*x^(-0.35) };
\addplot[black,solid,domain=10^(1.5):10^(6.5)] { 0.6*x^(-3/4) };
\node at (axis cs:1e5,2e-2) [anchor=south west] {$\mathcal{O}(N_\ell^{-0.35})$};
\node at (axis cs:1e5,9e-5) [anchor=south west] {$\mathcal{O}(N_\ell^{-3/4})$};
\legend{
{$\mu_\ell$ (primal)},
{$\zeta_\ell$ (dual)},
{$\mu_\ell\,\zeta_\ell$},
{$|G(u_{\rm ref}) - G(u_\ell)|$}
}
\end{loglogaxis}
\end{tikzpicture}
\caption{
Error estimates $\mu_\ell$, $\zeta_\ell$, $\mu_\ell\,\zeta_\ell$ and the reference error $|G(u_{\rm ref}) - G(u_\ell)|$ 
at each iteration of Algorithm~\ref{alg:parametric} in Experiment~2 with $\thetaX=0.3$, $\thetaP=0.8$ (case~(i)) 
for $\sigma=2$ (left) and $\sigma=4$ (right)
(here, $G(\uref) = 1.789774$e-$2$ for $\sigma=2$ and $G(\uref) = 1.855648$e-$2$  for $\sigma=4$).
}
\label{Ex2:decay:case:i}
\end{figure}

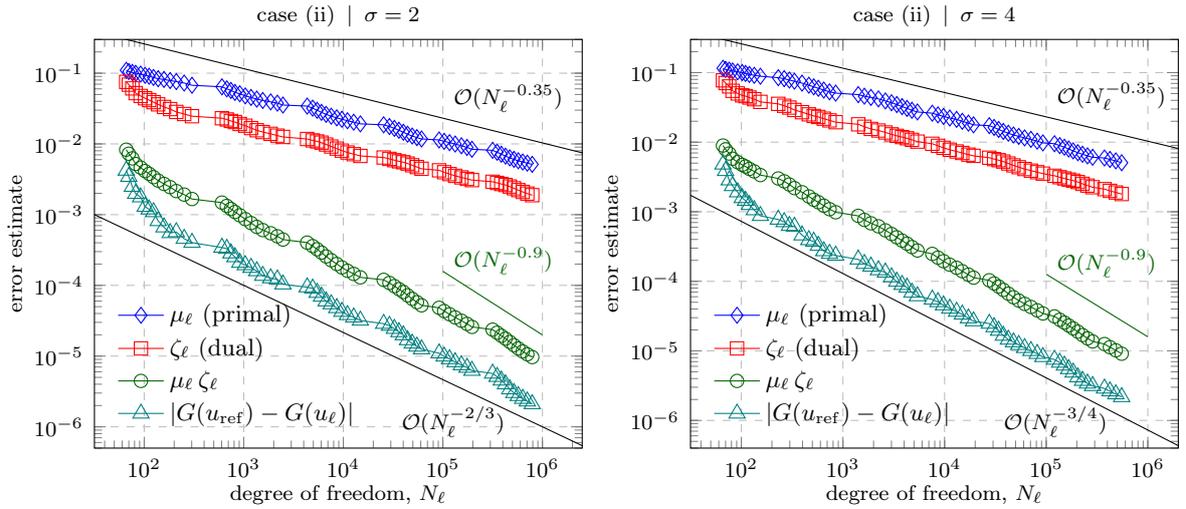
\begin{figure}[t!]
\begin{tikzpicture}
\pgfplotstableread{data/parametric/newex2/slow_decay/thetaX0.15_thetaP0.95.dat}{\first}
\begin{loglogaxis}
[
title={case~(ii) \,|\, $\sigma = 2$},						
xlabel={degree of freedom, $N_\ell$},						
ylabel={error estimate},									
ylabel style={font=\tiny,yshift=-1.0ex}, 					
ymajorgrids=true, xmajorgrids=true, grid style=dashed,		
xmin = 10^(1.5), 	xmax = 10^(6.4),						
ymin = 5*10^(-7),	ymax = 3*10^(-1),						
width = 8.0cm, height=7.0cm,
legend style={legend pos=south west, legend cell align=left, fill=none, draw=none, font={\fontsize{9pt}{12pt}\selectfont}}
]
\addplot[blue,mark=diamond,mark size=3.0pt]	table[x=dofs, y=error_primal]{\first};
\addplot[red,mark=square,mark size=2.5pt]	table[x=dofs, y=error_dual]{\first};
\addplot[darkGreen,mark=o,mark size=2.5pt]	table[x=dofs, y=error_product]{\first};
\addplot[teal,mark=triangle,mark size=3.5pt]table[x=dofs, y=truegerr]{\first};
\addplot[black,solid,domain=10^(1.5):10^(7.1)] { 1.3*x^(-0.35) };
\addplot[darkGreen,solid,domain=10^(5):10^(6)] {5*x^(-0.9) };
\addplot[black,solid,domain=10^(1.5):10^(7.1)] { 0.01*x^(-2/3) };
\node at (axis cs:1e5,2e-2) [anchor=south west] {$\mathcal{O}(N_\ell^{-0.35})$};
\node[darkGreen] at (axis cs:1e5,1e-4) [anchor=south west]{$\mathcal{O}(N_\ell^{-0.9})$};
\node at (axis cs:3e4,5e-7) [anchor=south west] {$\mathcal{O}(N_\ell^{-2/3})$};
\legend{
{$\mu_\ell$ (primal)},
{$\zeta_\ell$ (dual)},
{$\mu_\ell\,\zeta_\ell$},
{$|G(u_{\rm ref}) - G(u_\ell)|$}
}
\end{loglogaxis}
\end{tikzpicture}
\begin{tikzpicture}
\pgfplotstableread{data/parametric/newex2/fast_decay/thetaX0.15_thetaP0.95.dat}{\first}
\begin{loglogaxis}
[
title={case~(ii) \,|\, $\sigma=4$},
xlabel={degree of freedom, $N_\ell$}, 						
ylabel={error estimate},									
ylabel style={font=\tiny,yshift=-1.0ex}, 					
ymajorgrids=true, xmajorgrids=true, grid style=dashed,		
xmin = 10^(1.5), 	xmax = 10^(6.3),						
ymin = 4*10^(-7),	ymax = 3*10^(-1),						
width = 8.0cm, height=7.0cm,
legend style={legend pos=south west, legend cell align=left, fill=none, draw=none, font={\fontsize{9pt}{12pt}\selectfont}}
]
\addplot[blue,mark=diamond,mark size=3.0pt]	table[x=dofs, y=error_primal]{\first};
\addplot[red,mark=square,mark size=2.5pt]	table[x=dofs, y=error_dual]{\first};
\addplot[darkGreen,mark=o,mark size=2.5pt]	table[x=dofs, y=error_product]{\first};
\addplot[teal,mark=triangle,mark size=3.5pt]table[x=dofs, y=truegerr]{\first};
\addplot[black,solid,domain=10^(1.5):10^(7)] { 1.3*x^(-0.35) };
\addplot[darkGreen,solid,domain=10^(5):10^(6)] { 4*x^(-0.9) };
\addplot[black,solid,domain=10^(1.5):10^(7)] { 0.023*x^(-3/4) };
\node at (axis cs:1e5,2e-2) [anchor=south west] {$\mathcal{O}(N_\ell^{-0.35})$};
\node at (axis cs:1e5,8e-5) [anchor=south west] {\textcolor{darkGreen}{$\mathcal{O}(N_\ell^{-0.9})$}};
\node at (axis cs:3e4,4e-7) [anchor=south west] {$\mathcal{O}(N_\ell^{-3/4})$};
\legend{
{$\mu_\ell$ (primal)},
{$\zeta_\ell$ (dual)},
{$\mu_\ell\,\zeta_\ell$},
{$|G(u_{\rm ref}) - G(u_\ell)|$}
}
\end{loglogaxis}
\end{tikzpicture}
\caption{
Error estimates $\mu_\ell$, $\zeta_\ell$, $\mu_\ell\,\zeta_\ell$ and the reference error $|G(u_{\rm ref}) - G(u_\ell)|$ 
at each iteration of Algorithm~\ref{alg:parametric} in Experiment~2 
with $\thetaX=0.15$, $\thetaP=0.95$ (case~(ii)) for $\sigma=2$ (left) and $\sigma=4$ (right)
(here, $G(\uref) = 1.789774$e-$2$ for $\sigma=2$ and $G(\uref) = 1.855648$e-$2$ for $\sigma=4$).
}
\label{Ex2:decay:case:ii}
\end{figure}

Table~\ref{Ex2:table} collects the outputs of all computations. 
On the one hand, we observe that in case~(i), for both slow and fast decay of the amplitude coefficients, the algorithm took fewer 
iterations compared to case~(ii) 
($37$ versus $62$ for $\sigma=2$ and $37$ versus $63$ for $\sigma=4$) and
reached the tolerance faster (see the final times $t$ in Table~\ref{Ex2:table}). 
On the other hand, due to a larger $\thetaX$ in case~(i), the algorithm produced more refined triangulations
(see the values of $\#\TT_L$ in Table~\ref{Ex2:table}). 
Also, we observe that final index sets generated for the problem with slow decay ($\sigma=2$) are larger than those
for the problem with fast decay ($\sigma=4$)
($20$ indices versus $12$ in case~(i) and $29$ indices versus $17$ in case~(ii)). 
Furthermore, the algorithm tends to activate more parameters and to generate polynomial
approximations of lower degree for the problem with slow decay
(e.g., in case (i), polynomials of total degree $4$ in $6$ parameters for $\sigma=2$ versus
polynomials of total degree $6$ in $3$ parameters for $\sigma=4$).
Note that this behavior has been previously observed in numerical experiments for
parametric problems on the square domain; see, e.g.,~\cite{bs16}.

\begin{figure}[t!]
\begin{tikzpicture}
\pgfplotstableread{data/parametric/newex2/slow_decay/thetaX0.30_thetaP0.80.dat}{\first}
\pgfplotstableread{data/parametric/newex2/slow_decay/thetaX0.15_thetaP0.95.dat}{\second}
\begin{semilogxaxis}
[
width = 8.0cm, height=6.0cm,						
title={slow decay ($\sigma=2$)},					
xlabel={degree of freedom, $N_\ell$}, 				
ylabel={effectivity index, $\Theta_\ell$},			
ylabel style={font=\tiny,yshift=-3.0ex}, 			
ymajorgrids=true, xmajorgrids=true, grid style=dashed,		
xmin = (5)*10^1, 	xmax = 10^6,					
ymin = 1.7,	ymax = 5.1,								
ytick={2,3,4,5},									
legend style={legend pos=south east, legend cell align=left, fill=none, draw=none, font={\fontsize{9pt}{12pt}\selectfont}}
]
\addplot[blue,mark=o,mark size=3pt]				table[x=dofs, y=effindices]{\first};
\addplot[red,mark=triangle,mark size=3.5pt]		table[x=dofs, y=effindices]{\second};
\legend{
{$\thetaX = 0.3$, $\thetaP = 0.8$},
{$\thetaX = 0.15$, $\thetaP = 0.95$}
}
\end{semilogxaxis}
\end{tikzpicture}
\hspace{2pt}
\begin{tikzpicture}
\pgfplotstableread{data/parametric/newex2/fast_decay/thetaX0.30_thetaP0.80.dat}{\first}
\pgfplotstableread{data/parametric/newex2/fast_decay/thetaX0.15_thetaP0.95.dat}{\second}
\begin{semilogxaxis}
[
width = 8.0cm, height=6.0cm,						
title={fast decay ($\sigma=4$)},					
xlabel={degree of freedom, $N_\ell$}, 				
ylabel={effectivity index, $\Theta_\ell$},			
ylabel style={font=\tiny,yshift=-3.0ex}, 			
ymajorgrids=true, xmajorgrids=true, grid style=dashed,		
xmin = (5)*10^1, 	xmax = 10^6,					
ymin = 1.7,	ymax = 5.1,								
ytick={2,3,4,5},									
legend style={legend pos=south east, legend cell align=left, fill=none, draw=none, font={\fontsize{9pt}{12pt}\selectfont}}
]
\addplot[blue,mark=o,mark size=3pt]				table[x=dofs, y=effindices]{\first};
\addplot[red,mark=triangle,mark size=3.5pt]		table[x=dofs, y=effindices]{\second};
\legend{
{$\thetaX = 0.3$, $\thetaP = 0.8$},
{$\thetaX = 0.15$, $\thetaP = 0.95$}
}
\end{semilogxaxis}
\end{tikzpicture}
\caption{
The effectivity indices for the goal-oriented error estimates in Experiment~2 at each iteration of 
Algorithm~\ref{alg:parametric}.
}
\label{Ex2:effindices}
\end{figure}
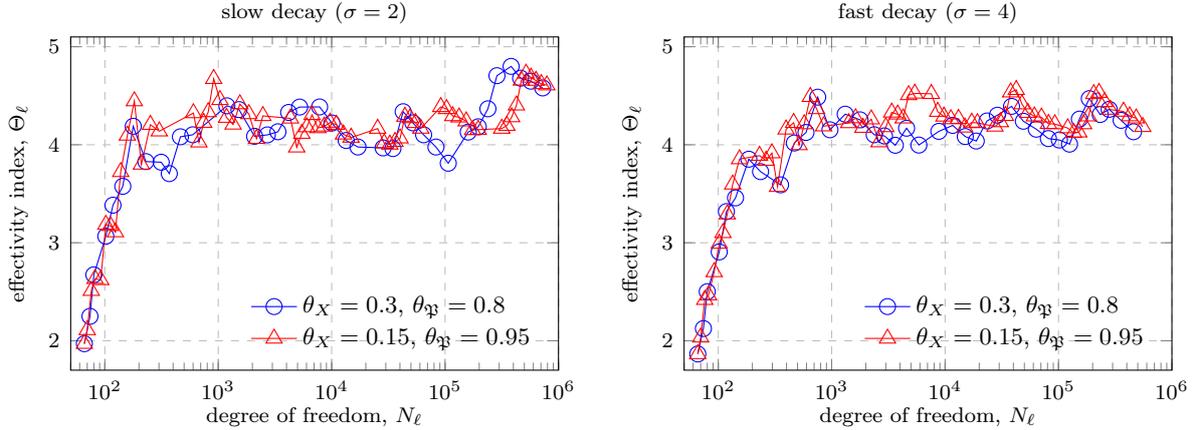

Figure~\ref{Ex2:decay:case:i} (resp., Figure~\ref{Ex2:decay:case:ii})
shows the convergence history of three error estimates
($\mu_\ell$, $\zeta_\ell$, and $\mu_\ell \zeta_\ell$) and the reference error in the goal functional 
in case~(i) (resp., case~(ii)) of marking parameters.
Firstly, we can see that the estimates $\mu_\ell \zeta_\ell$ 
converge with a faster rate for the problem with $\sigma=4$ than for the problem with $\sigma=2$.
This is true in both cases of marking parameters.
In particular, the overall convergence rate is about $\mathcal{O}(N^{-3/4})$ when $\sigma=4$ and 
about $\mathcal{O}(N^{-2/3})$ when $\sigma=2$.
Secondly, we observe an improved convergence rate during mesh refinement steps in case (ii)
(i.e., for smaller $\thetaX$ and larger $\thetaP$).
For both problems, i.e., for $\sigma=2$ and $\sigma=4$, this rate is about $\mathcal{O}(N^{-0.9})$
(see Figure~\ref{Ex2:decay:case:ii}), i.e., very close to the optimal one.

We conclude this experiment by testing the effectivity of the goal-oriented error estimation
at each iteration of Algorithm~\ref{alg:parametric}.
To this end, we compute the effectivity indices $\Theta_\ell$ (see~\eqref{eff:indices}) by
employing reference Galerkin solutions to problems with slow ($\sigma=2$) and fast ($\sigma=4$)
decay of the amplitude coefficients.
Specifically, for both problems we employ the same reference triangulation $\TT_{\rm ref}$
(obtained by a uniform refinement of the final triangulation $\TT_L$ generated
in case~(i) for the problem with slow decay),
but use two reference index sets
(namely, for $\sigma=2$, we set $\gotP_{\rm ref} := \gotP_L$, where $\gotP_L$ is generated
for the problem with slow decay in case~(ii) and
for $\sigma=4$, we set $\gotP_{\rm ref} := \gotP_L \cup \gotM_L$ with the corresponding 
$\gotP_L$ and $\gotM_L$ generated for the problem with fast decay in case~(ii)).
The effectivity indices are plotted in Figure~\ref{Ex2:effindices}.
As iterations progress, they tend to concentrate within the interval $(3.5,\,5.0)$ in all cases.

For the parametric model problem considered in this experiment, 
we conclude that Algorithm~\ref{alg:parametric} performs better if
the (spatial) marking threshold~$\thetaX$ is sufficiently small and
the (parametric) marking threshold $\thetaP < 1$ is sufficiently large
(see the results of experiments in case~(ii)).
In fact, in case~(ii), the estimates $\mu_\ell\zeta_\ell$ 
converge with nearly optimal rates during spatial refinement steps for problems with
slow and fast decay of the amplitude coefficients.
Furthermore, in this case, the algorithm generates 
richer index sets, which leads to more accurate parametric approximations. 

\subsection{Experiment~3} 
In the final experiment, we test the performance of Algorithm~\ref{alg:parametric} 
for the parametric model problem \eqref{eq:strongform} posed on the slit domain 
$D = (-1,1)^2 \setminus ([-1,0]\,\times\,\{0\})$.
The boundary of this domain is non-Lipschitz; however,
the problem on $D$ can be seen as a limit case of the problem on the Lipschitz domain
$D_\delta = (-1,1)^2 \setminus \overline{T}_\delta$ as $\delta \to 0$,
where $T_\delta = \text{conv}\{(0,0), (-1,\delta), (-1,-\delta)\}$
(cf.~\cite[p.~259]{SF08}).\footnote{We refer to~\cite[Section 2.7, p.~83]{Grisward92}
for a discussion about the well-posedness of the standard weak formulation
for the deterministic Poisson problem on the slit domain,
in particular, in the case of homogeneous Dirichlet boundary conditions.}
In fact, all computations in this experiment were performed for the domain
$D = D_\delta$ with $\delta = 0.005$.

Following \cite{emn16}, we consider a modification of the parametric coefficient used in Experiment~2. 
For all $m \in \N$ and $x \in D$, let $a_m(x)$ be the coefficients defined in \eqref{Eigel:coeff} with
$\alpha_m := A m^{-\sigma}$ for some $\sigma > 1$ and $0 < A < 1/\zeta(\sigma)$, 
where $\zeta$ is the Riemann zeta function.
Then, given two constants $c, \, \varepsilon > 0$, we define
\begin{equation} \label{emn:random:field}
a(x,\y) := \frac{c}{\alpha_{\rm min}}\left(\sum_{m=1}^\infty y_m a_m(x) + \alpha_{\rm min} \right) + \varepsilon, 
\end{equation}
where $\alpha_{\rm min} := A \zeta(\sigma)$ and
the parameters $y_m$ are the images of uniformly distributed independent mean-zero random variables on $[-1,1]$.

\begin{figure}[t!]
\centering
\footnotesize
	\includegraphics[scale=0.33]{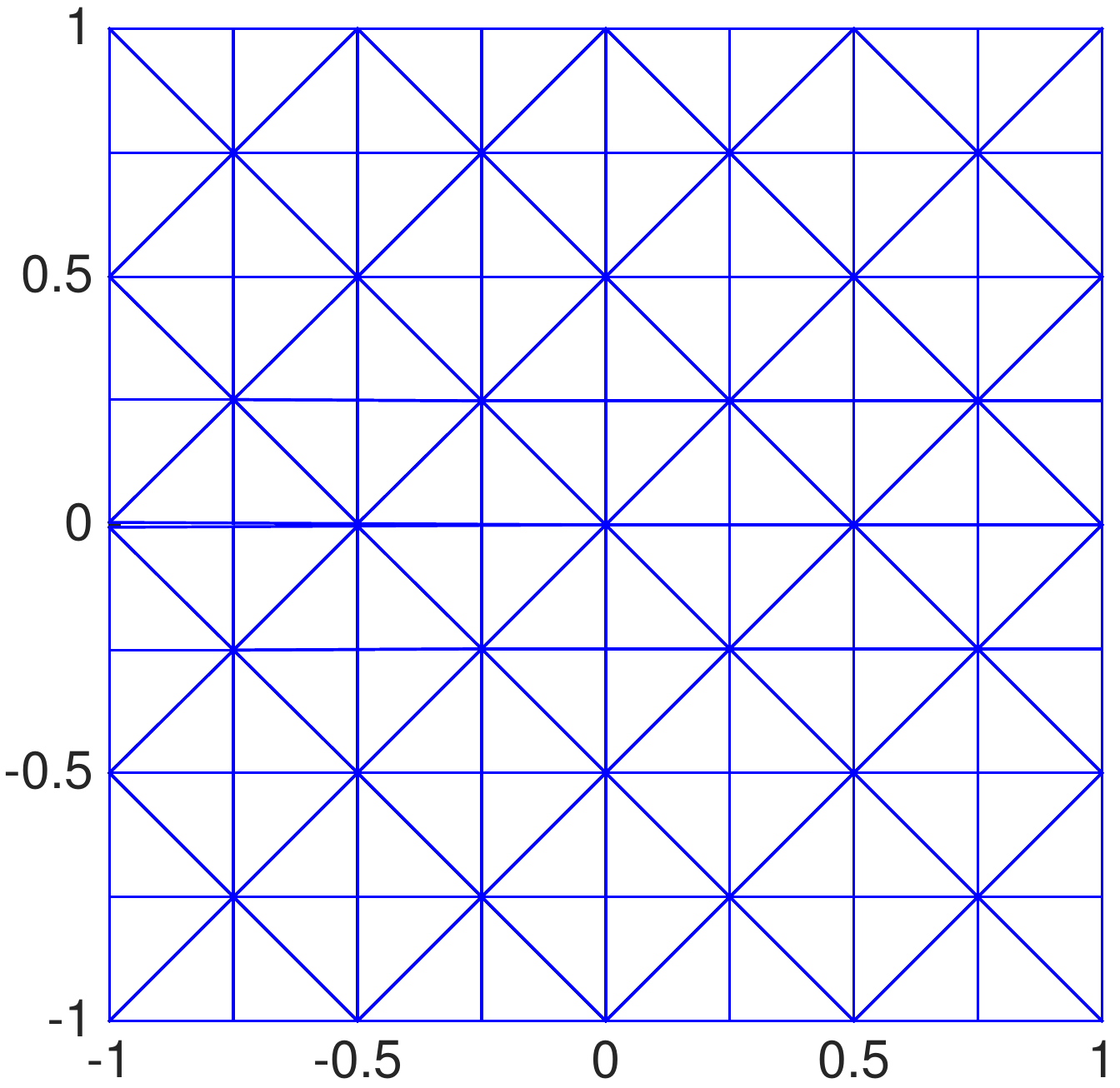} 
	\hspace{0.6cm}
	\includegraphics[scale=0.33]{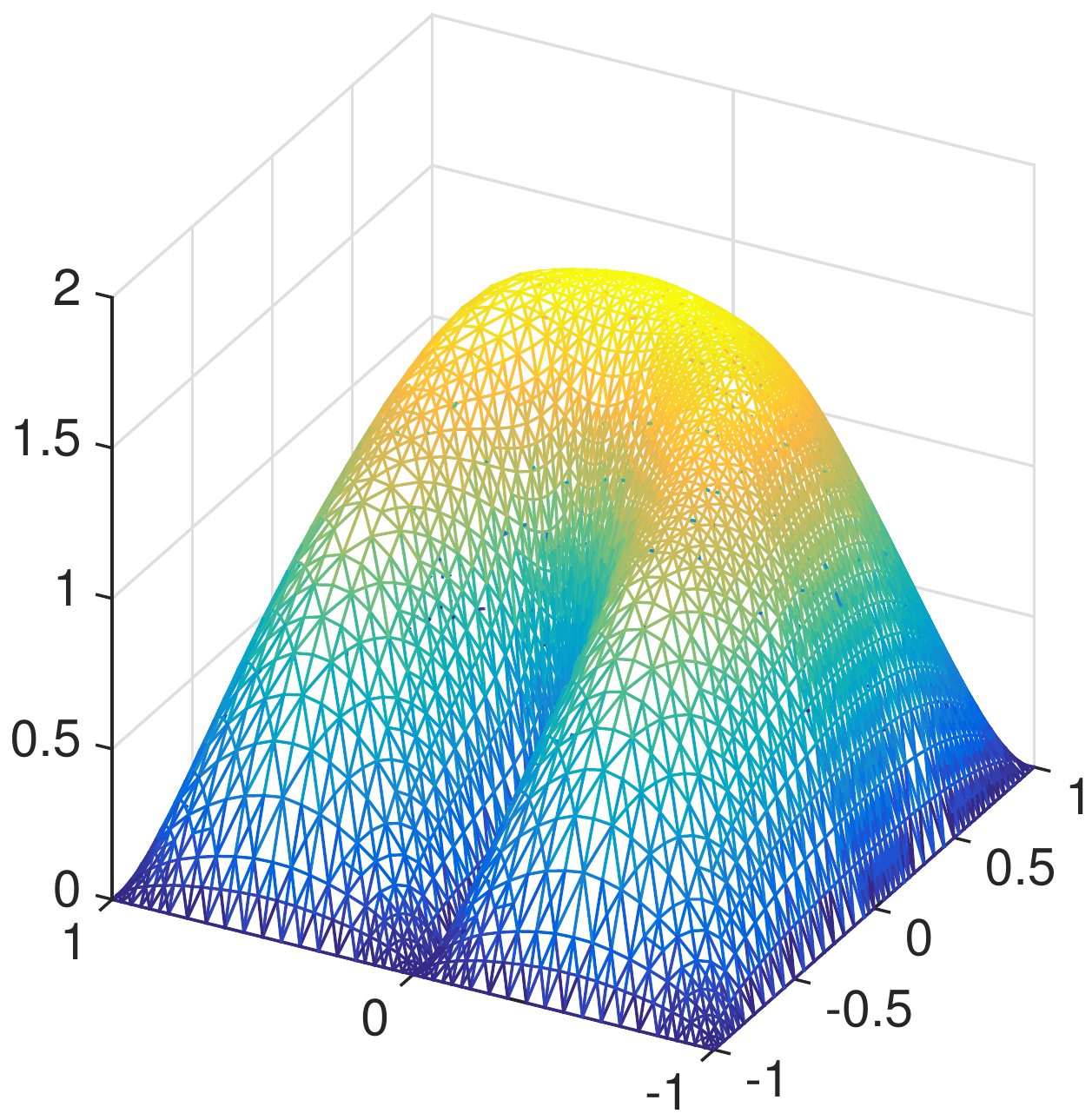} 
	\hspace{0.6cm}
	\includegraphics[scale=0.33]{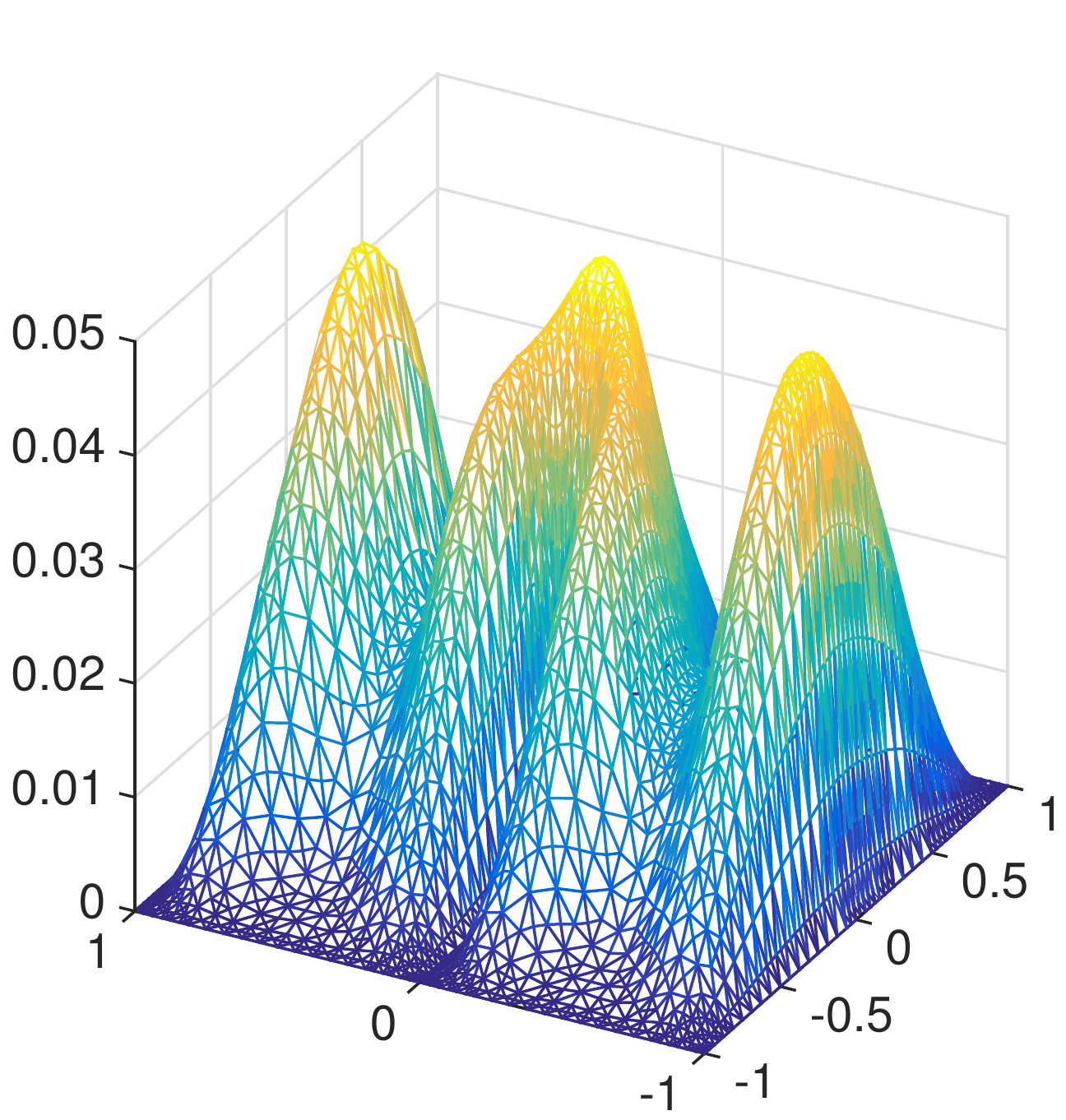} 
\caption{
Initial triangulation $\TT_0$ (left) 
as well as the mean field (middle) and the variance (right) of the primal Galerkin solution in 
Experiment~3.
}
\label{Ex3:T0:primal:sol:var}
\end{figure}

It is easy to see that $a(x,\y) \in [\varepsilon, \, 2c + \varepsilon]$ for all $x \in D$ and $\y \in \Gamma$.
Note that \eqref{emn:random:field} can be written in the form~\eqref{eq1:a} 
with $a_0(x) = c + \varepsilon$ and the expansion coefficients given by $(c \,a_m(x))/\alpha_{\rm min}$. 
Furthermore, conditions \eqref{eq2:a} and \eqref{eq3:a} are satisfied 
with $\azeromin =~\azeromax =~c + \varepsilon$ and $\tau =c/(c + \varepsilon)$, respectively.

It is known that solution $u$ to problem~\eqref{eq:strongform} in this example
exhibits a singularity induced by the slit in the domain.
Our aim in this experiment is to approximate the value of $u$ at some fixed point $x_0 \in D$
away from the slit.
To that end (and to stay within the framework of the bounded goal functional $G$ in~\eqref{rhs:dual:problem}), 
we fix a sufficiently small $r >0$ and define
$g_0$ as the \emph{mollifier} (see~\cite{prudhommeOden1999}):
\begin{equation} \label{mollifier}
g_0(x) = g_0(x; x_0, r) := 
\begin{cases}
C \exp\left(- \frac{r^2}{r^2 - \norm{x - x_0}{2}^2} \right)	& \text{if $\norm{x - x_0}{2} < r$}, \\
0 								& \text{otherwise.}
\end{cases}
\end{equation}
Here, $\norm{\cdot}{2}$ denotes the $\ell_2$-norm and the constant $C$ is chosen such that
\[
   \int_D g_0(x) dx = 1.
\]
Note that the value of the constant $C$ is independent of the location of $x_0 \in D$,
provided that $r$ is chosen sufficiently small such that $\supp(g_0(x;x_0,r)) \subset D$.
In this case, $C \approx 2.1436\,r^{-2}$ (see, e.g., \cite{prudhommeOden1999}).

Setting $f_0 = 1$, $\ff = (0,0)$ and $\gg = (0,0)$,
the functionals in~\eqref{rhs:primal:problem}--\eqref{rhs:dual:problem} read as
\begin{equation*}
F(v) = 
\int_\Gamma \int_D v(x,\y) \, dx \, \dpiy,
\quad
G(v) = 
\int_\Gamma \int_D g_0(x) v(x,\y) \, dx \, \dpiy	
\quad \text{for all } v \in V. 
\end{equation*}
Note that if $u(x,\y)$ is continuous in the spatial neighborhood of $x_0$, then
$G(u)$ converges to the mean value $\E[u(x_0,\y)]$ as $r$ tends to zero.

We fix $c = 10^{-1}$, $\varepsilon = 5\cdot 10^{-3}$, $\sigma=2$, $A = 0.6$ 
and set $x_0 = (0.4,-0.5) \in D$. 
In all computations performed in this experiment,
we use the coarse triangulation $\TT_0$ depicted in~Figure~\ref{Ex3:T0:primal:sol:var} (left plot).
Figure~\ref{Ex3:T0:primal:sol:var} also shows the mean field (middle plot) and the variance (right plot)
of the primal Galerkin solution.

\begin{figure}[t!]
\centering
\footnotesize
	\includegraphics[scale=0.31]{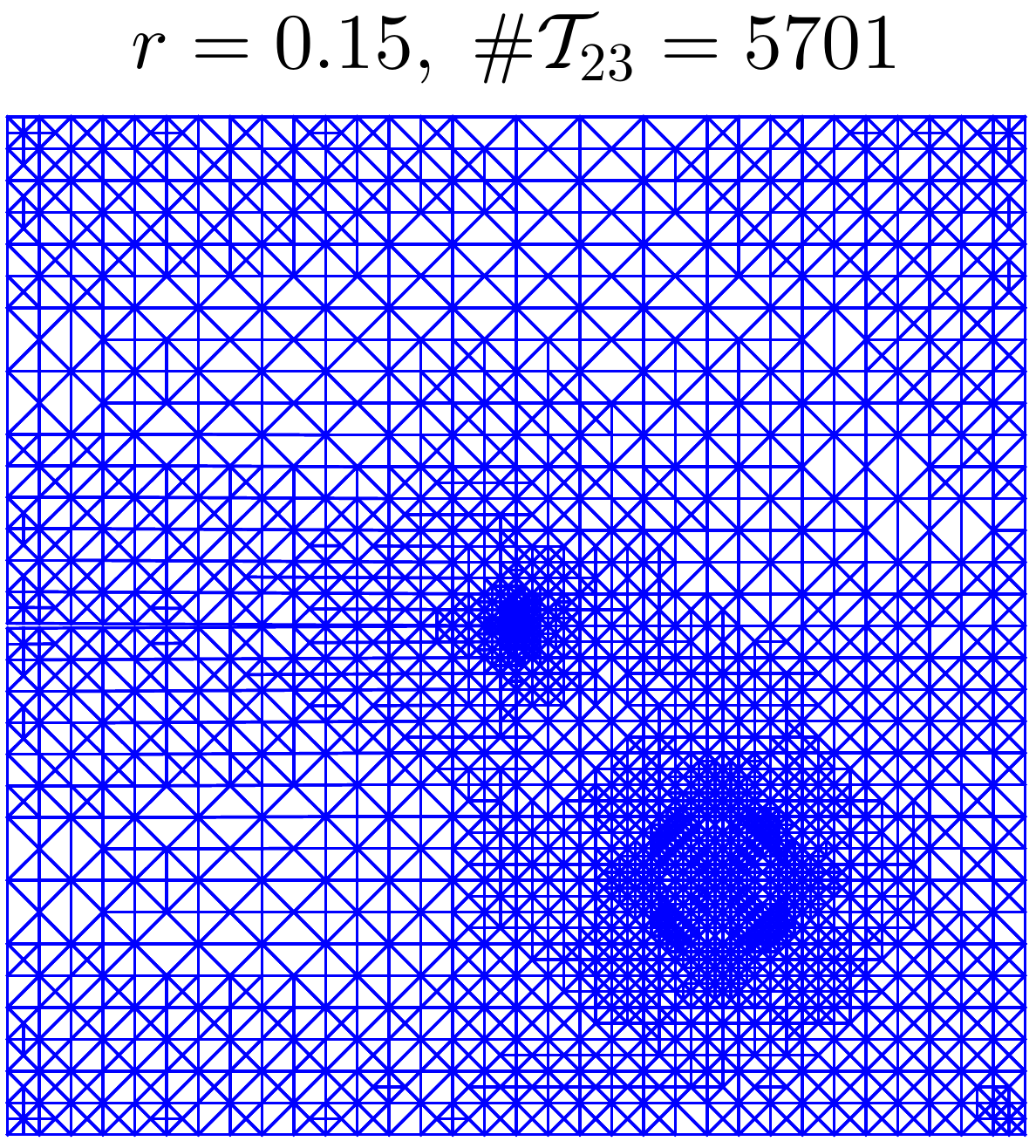} 	\hspace{0.01cm}
	\includegraphics[scale=0.31]{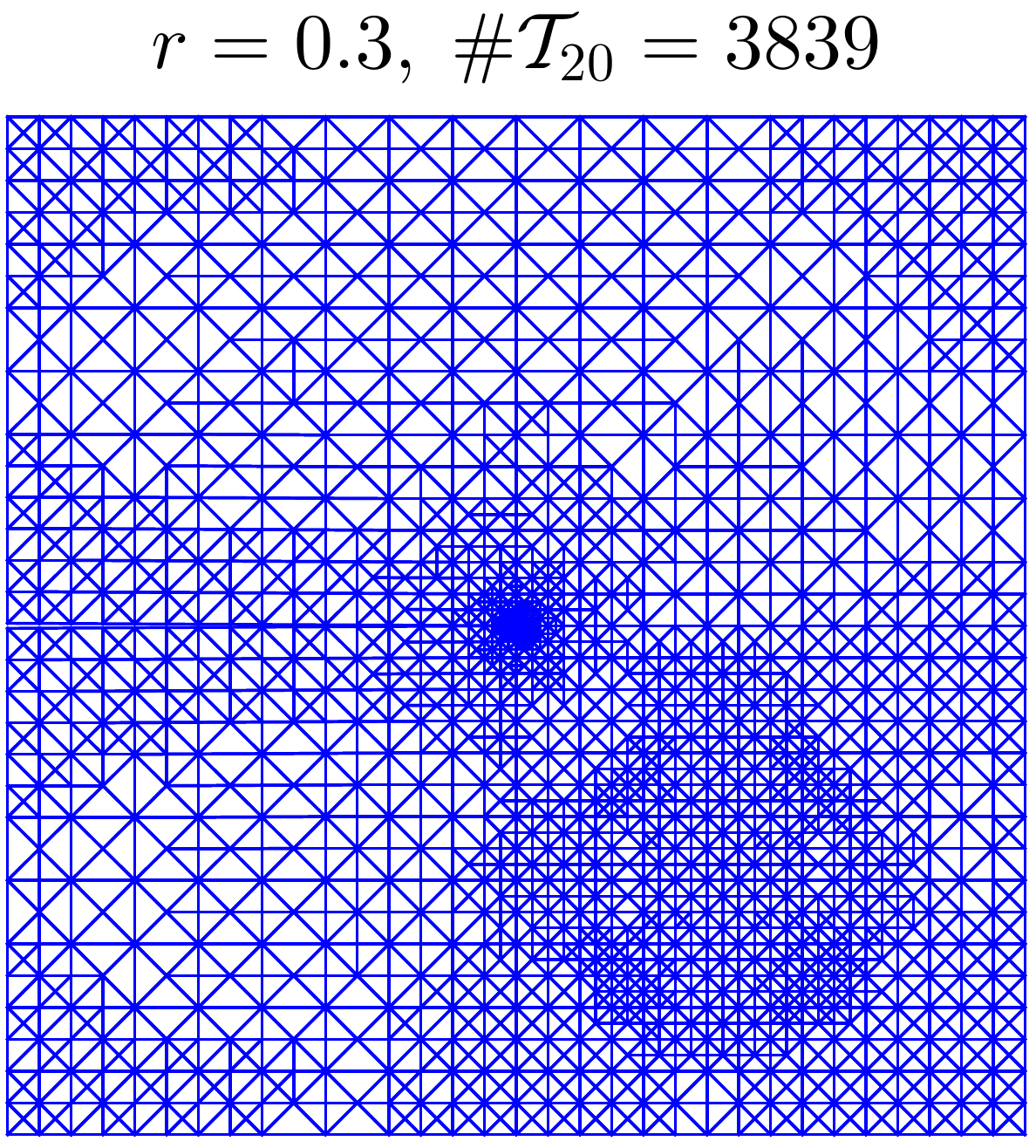} 	\hspace{0.01cm}
	\includegraphics[scale=0.31]{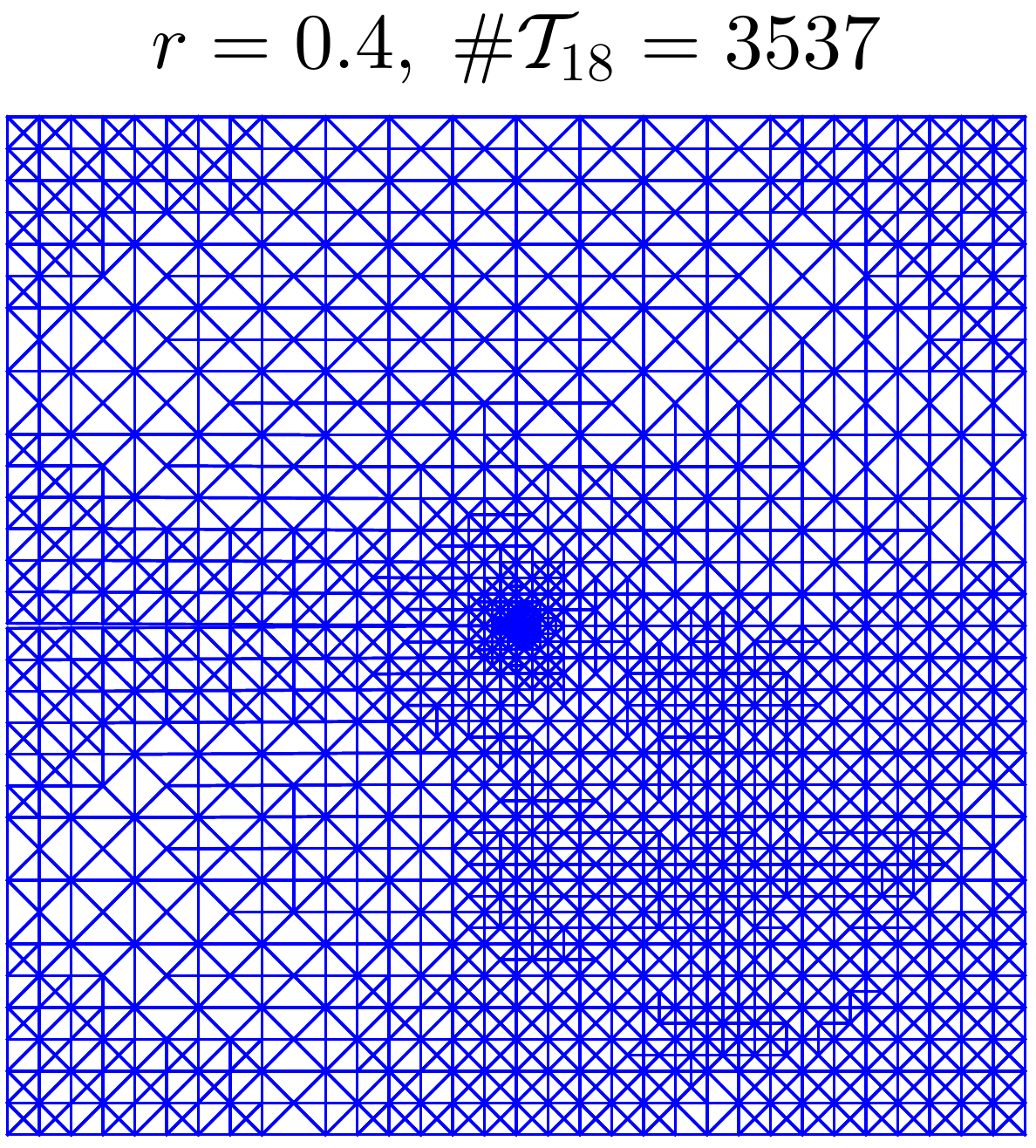} \hspace{0.01cm}
	\includegraphics[scale=0.31]{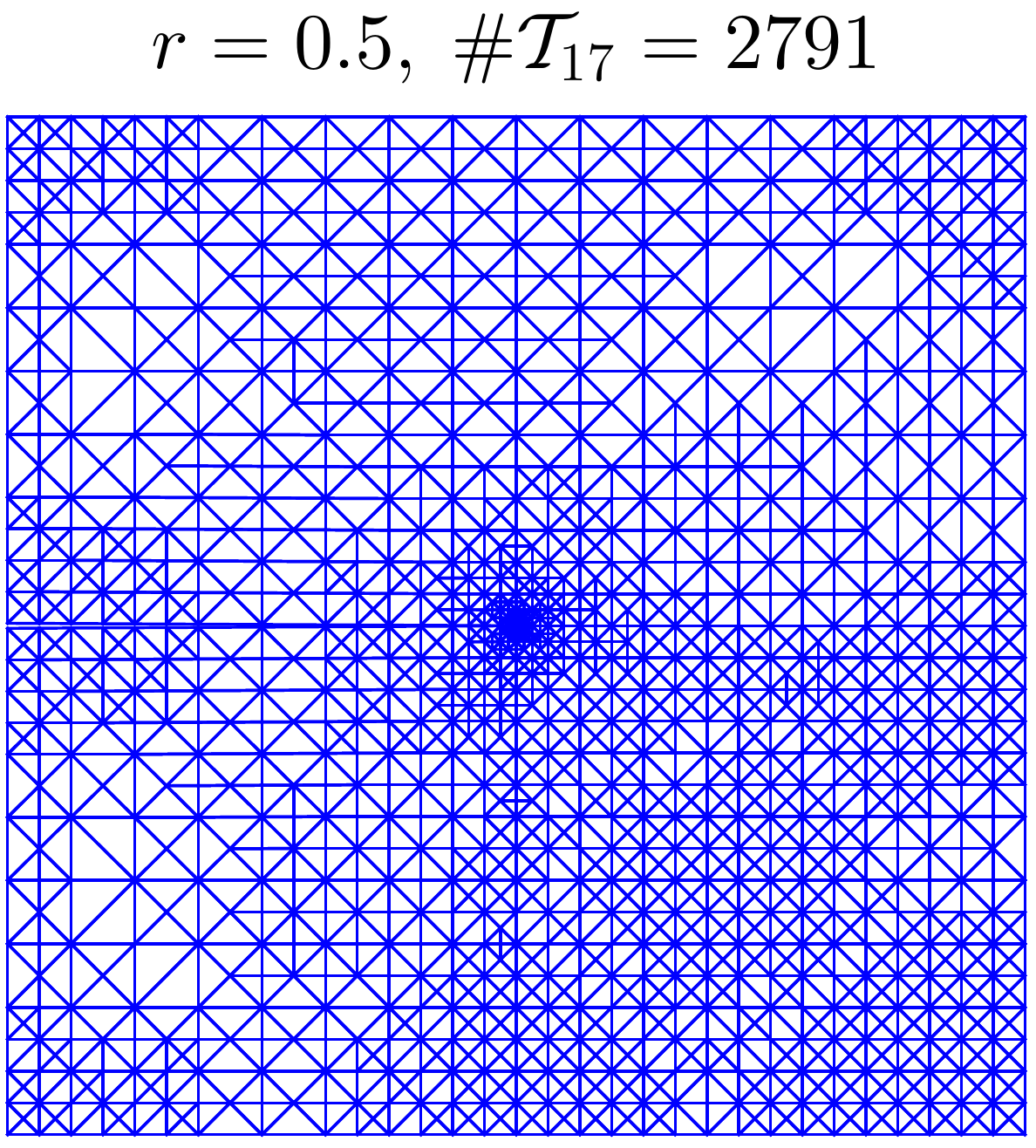}	\\
	\vspace{6pt}
	\includegraphics[scale=0.3]{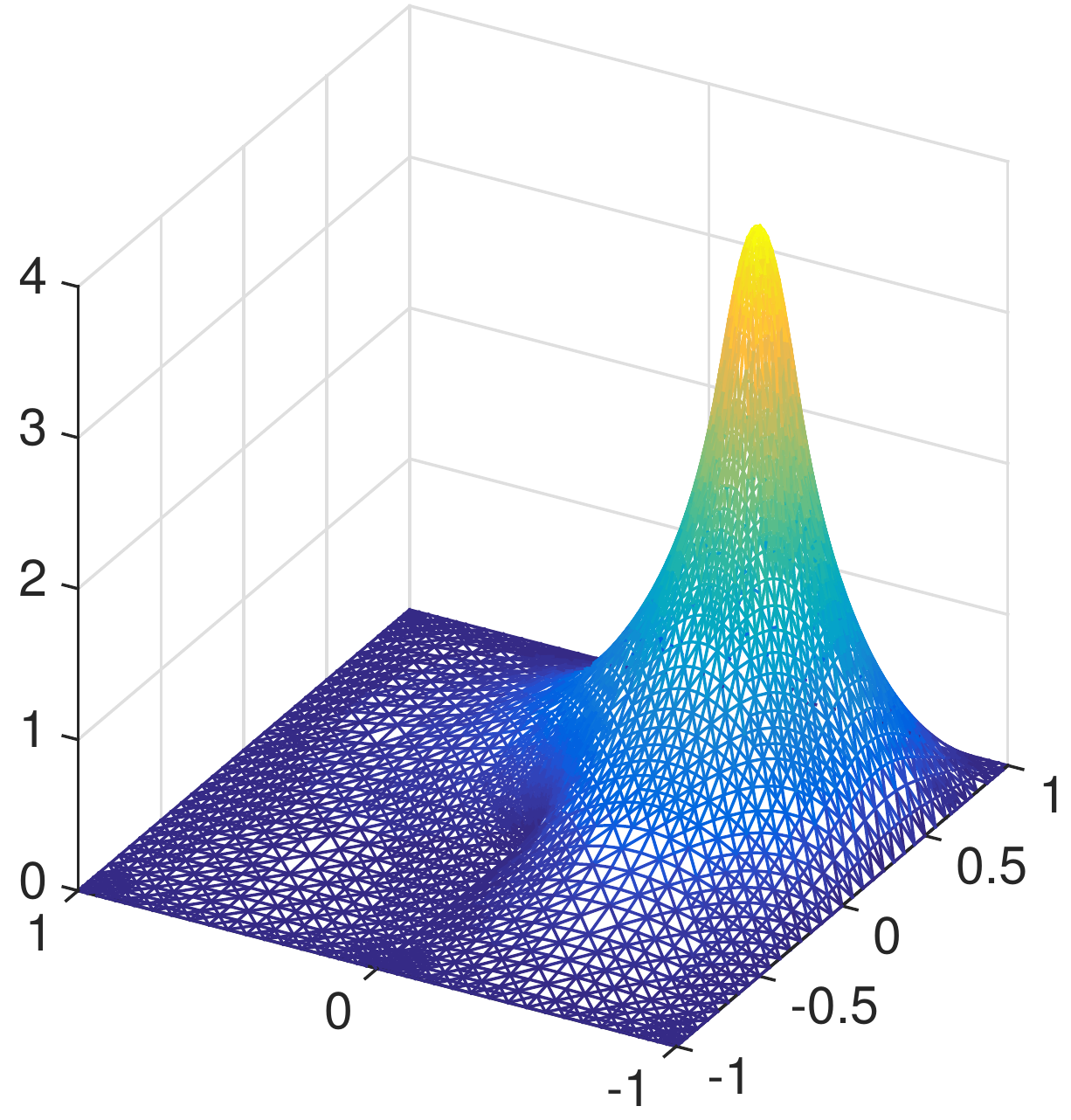} 	\hspace{0.1cm}
	\includegraphics[scale=0.3]{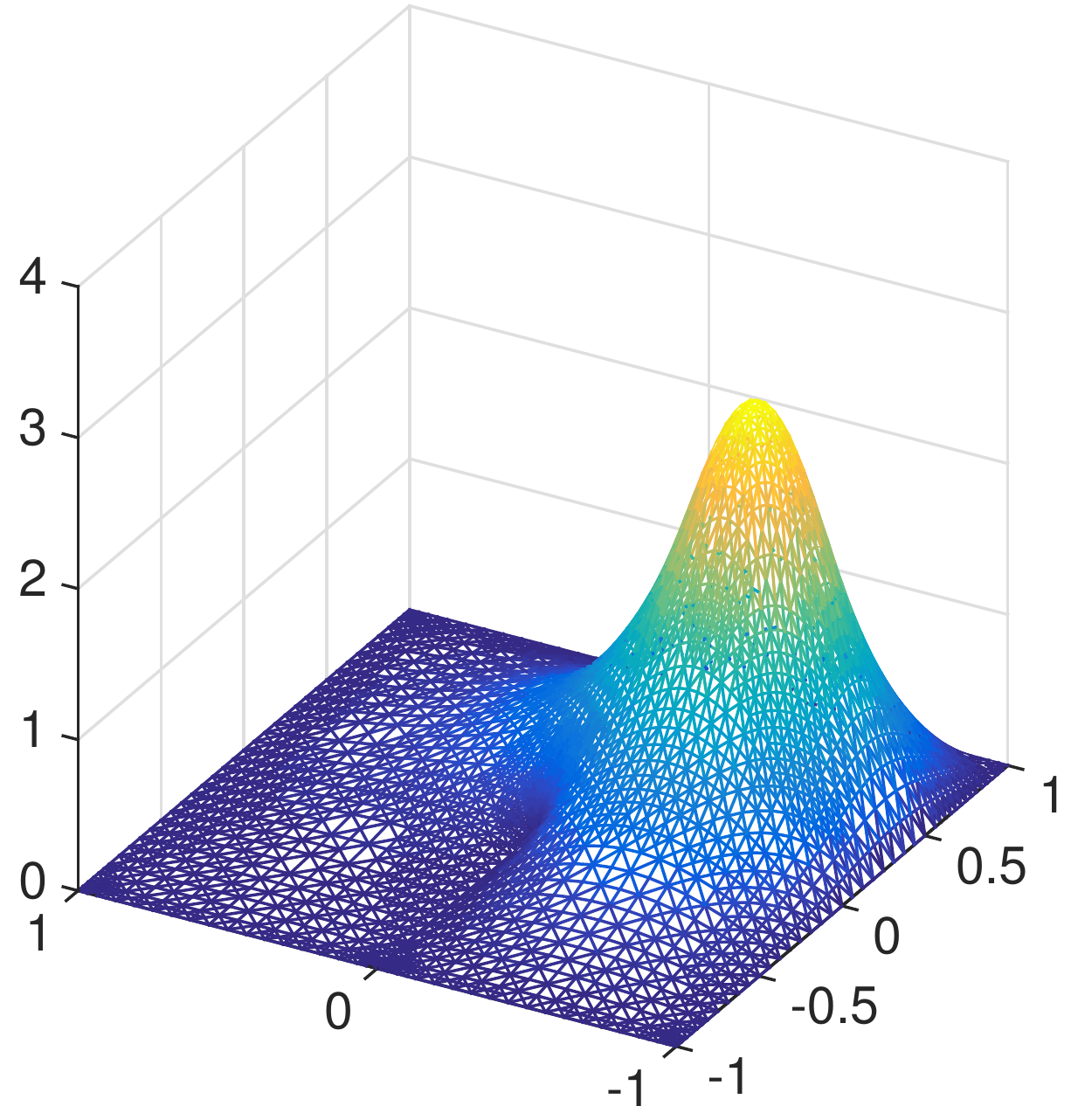} 	\hspace{0.1cm}
	\includegraphics[scale=0.3]{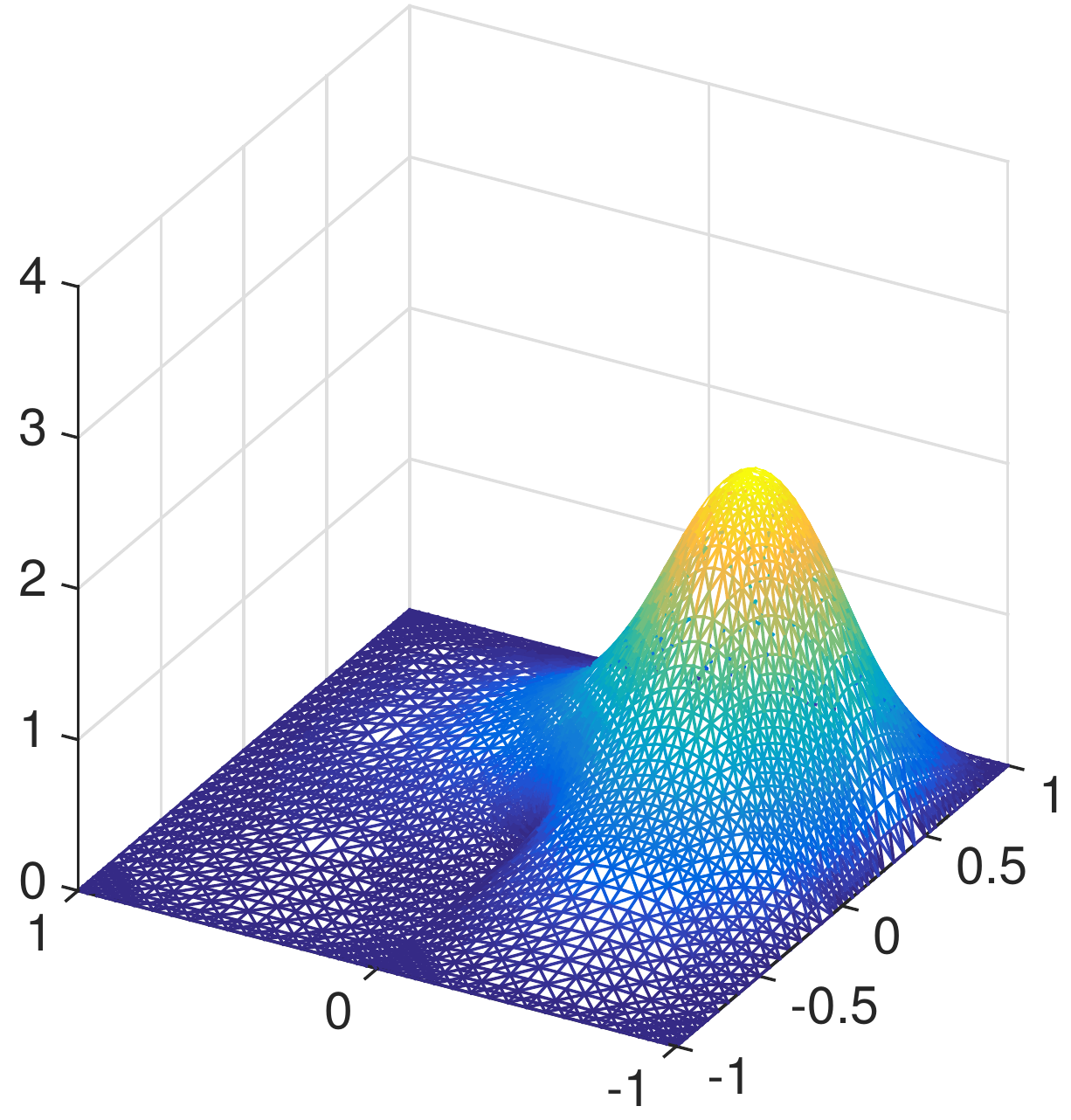} 	\hspace{0.1cm}
	\includegraphics[scale=0.3]{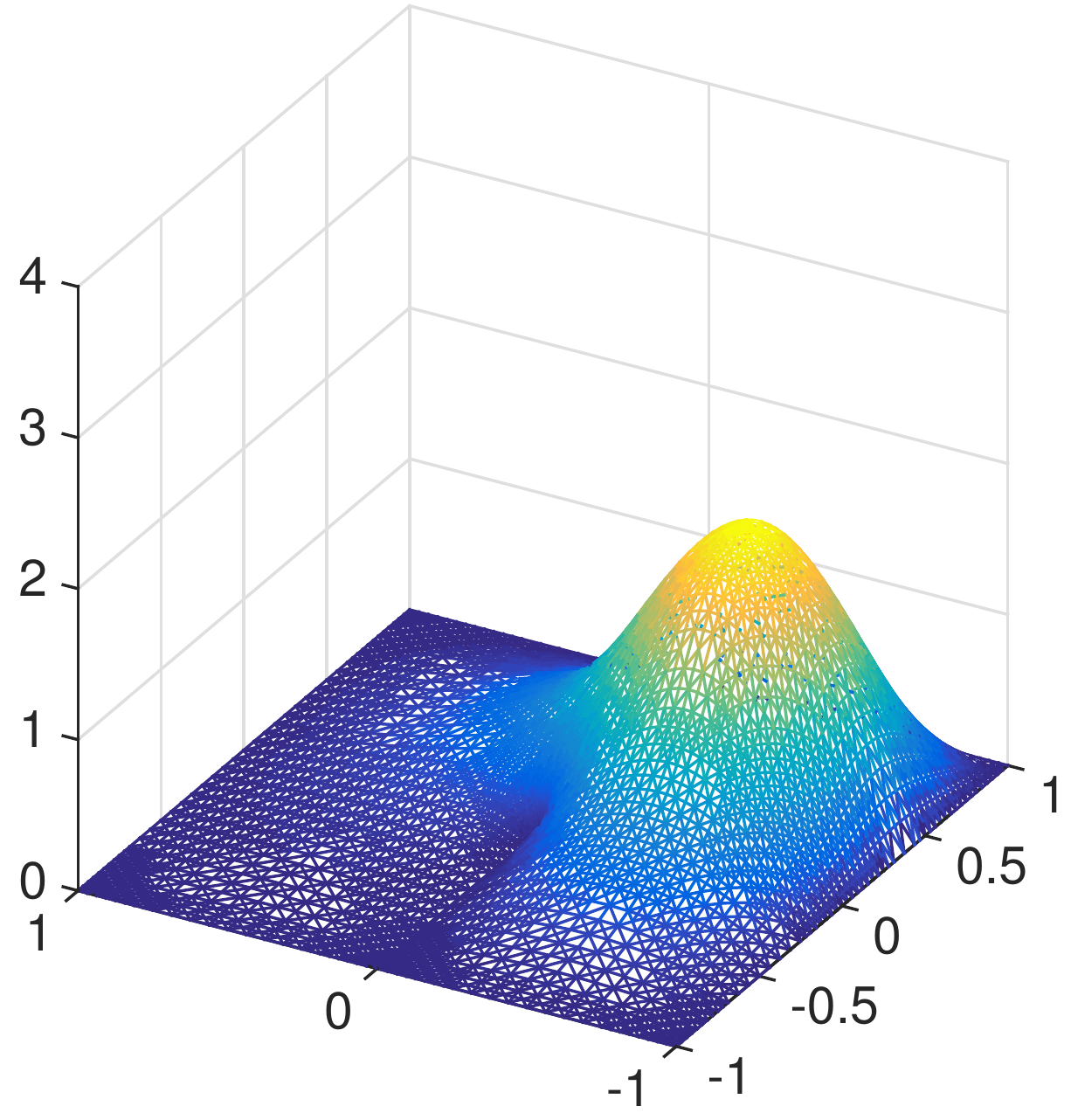} 
\caption{
Adaptively refined triangulations (top row) and the mean fields of dual Galerkin solutions (bottom row)
computed using the mollifier $g_0$ in~\eqref{mollifier} with $r = 0.15, 0.3, 0.4, 0.5$ (Experiment~3).
}
\label{Ex3:sequence:meshes}
\end{figure}

First, we fix $\textsf{tol} = 7$e-$03$ 
and run Algorithm~\ref{alg:parametric} to compute dual Galerkin solutions
for different values of radius $r$ in~\eqref{mollifier}.
Figure~\ref{Ex3:sequence:meshes} shows the refined triangulations (top row) and the corresponding
mean fields of dual Galerkin solutions (bottom row) for $r = 0.15, 0.3, 0.4, 0.5$.
As observed in previous experiments, the triangulations generated by the algorithm simultaneously capture
spatial features of primal and dual solutions.
In this experiment, the triangulations are refined in the vicinity of each corner,
with particularly strong refinement near the origin, where the primal solution exhibits a singularity;
in addition to that, for smaller values of $r$ ($r=0.15,\; 0.3$),
the triangulation is strongly refined in a neighborhood of~$x_0$
due to sharp gradients in the corresponding dual solutions
(note that the refinements in the neighborhood of~$x_0$ become coarser as $r$ increases).

Let us now fix $r=0.15$ (which gives $C\approx 95.271$ in~\eqref{mollifier}) 
and run Algorithm~\ref{alg:parametric} with two sets of marking parameters: 
(i)~$\thetaX=0.3$, $\thetaP=0.8$; (ii)~$\thetaX=0.15$, $\thetaP=0.8$.
In both computations we choose $\overline{M}=1$ in \eqref{finite:index:set:Q} and set
the tolerance to $\textsf{tol} = 6.0$e-$04$.

In Table~\ref{Ex3:table}, we collect the outputs of computations in both cases.
In agreement with results of previous experiments, we see that running the algorithm
with a smaller value of $\thetaX$ (i.e., in case (ii)) requires
more iterations to reach the tolerance (see the values of $L$ in both columns in Table~\ref{Ex3:table}).
We also observe that, for a fixed $\thetaP$, choosing a smaller $\thetaX$
naturally results in a less refined final triangulation
($\#\TT_L=54,819$ in case~(ii) versus $\#\TT_L=67,955$ in case~(i));
interestingly, this under-refinement in the spatial approximation
is balanced by a more accurate polynomial approximation on the parameter domain
(i.e., a larger final index set $\#\gotP_L$ is generated:
$28$ indices with $7$ active parameters in case (ii) versus
$21$ indices with $6$ active parameters in case~(i)).

\begin{table}[t!]
\setlength\tabcolsep{4.9pt} 
\begin{center} 
\smallfontthree{
\renewcommand{\arraystretch}{1.45}
\begin{tabular}{r !{\vrule width 1.0pt} c  c !{\vrule width 1.0pt} c c } 
\noalign{\hrule height 1.0pt}
%
&\multicolumn{2}{c!{\vrule width 1.0pt}}{case (i): $\thetaX=0.3,\ \thetaP=0.8$} 
&\multicolumn{2}{c}{case (ii): $\thetaX=0.15,\ \thetaP=0.8$} \\	
\hline
$L$						&\multicolumn{2}{c!{\vrule width 1.0pt}}{$33$}		
						&\multicolumn{2}{c}{$56$}\\[-3pt]

$\mu_L\zeta_L$			&\multicolumn{2}{c!{\vrule width 1.0pt}}{$5.5663\text{e-}04$}	
						&\multicolumn{2}{c}{5.9495$\text{e-}04$}\\[-3pt]

$t$ (sec)			 	&\multicolumn{2}{c!{\vrule width 1.0pt}}{$434$}	
						&\multicolumn{2}{c}{$659$}\\[-3pt]

$N_{\rm total}$			&\multicolumn{2}{c!{\vrule width 1.0pt}}{$3,006,114$}	
						&\multicolumn{2}{c}{$4,890,073$}\\[-3pt]	
									
$N_L$					&\multicolumn{2}{c!{\vrule width 1.0pt}}{$706,398$}
						&\multicolumn{2}{c}{$759,136$}\\[-3pt]

$\#\TT_L$				&\multicolumn{2}{c!{\vrule width 1.0pt}}{$67,955$}
						&\multicolumn{2}{c}{$54,819$}\\[-3pt]

$\#\NN_L$				&\multicolumn{2}{c!{\vrule width 1.0pt}}{$33,638$}
						&\multicolumn{2}{c}{$27,112$}\\[-3pt]								
					
$\#\gotP_L$ 			&\multicolumn{2}{c!{\vrule width 1.0pt}}{$21$}
						&\multicolumn{2}{c}{$28$}\\[-3pt]
				
$M^\mathrm{active}_L$	&\multicolumn{2}{c!{\vrule width 1.0pt}}{$6$}
						&\multicolumn{2}{c}{$7$}\\

\hline 
\multicolumn{5}{l}{Evolution of the index set} \\
\hline

$\gotP_\ell$	
&$\ell=0$	&$(0\ 0)$				&$\ell=0$	&$(0\ 0)$	\\[-4.5pt]
&			&$(1\ 0)$				&			&$(1\ 0)$	\\
				
&$\ell=11$	&$(0\ 1)$				&$\ell=13$	&$(0\ 1)$\\[-4.5pt]
&			&$(2\ 0)$				&			&$(2\ 0)$\\

&$\ell=17$	&$(0\ 0\ 1)$			&$\ell=25$	&$(0\ 0\ 1)$\\[-4.5pt]
&			&$(1\ 1\ 0)$			&			&$(1\ 1\ 0)$\\[-4.5pt]
&			&$(3\ 0\ 0)$			&			&$(3\ 0\ 0)$\\
	
&$\ell=23$	&$(0\ 0\ 0\ 1)$			&$\ell=36$	&$(0\ 0\ 0\ 1)$\\[-4.5pt]
&			&$(1\ 0\ 1\ 0)$			&			&$(1\ 0\ 1\ 0)$\\[-4.5pt]
&			&$(2\ 1\ 0\ 0)$			&			&$(2\ 1\ 0\ 0)$\\

&$\ell=29$	&$(0\ 0\ 0\ 0\ 1)$		&$\ell=45$	&$(0\ 0\ 0\ 0\ 1)$\\[-4.5pt]
&			&$(1\ 0\ 0\ 1\ 0)$		&			&$(1\ 0\ 0\ 1\ 0)$\\[-4.5pt]
&			&$(2\ 0\ 1\ 0\ 0)$		&			&$(2\ 0\ 1\ 0\ 0)$\\[-4.5pt]
&			&$(4\ 0\ 0\ 0\ 0)$		&			&$(4\ 0\ 0\ 0\ 0)$\\
		
&$\ell=31$	&$(0\ 0\ 0\ 0\ 0\ 1)$	&$\ell=49$	&$(0\ 0\ 0\ 0\ 0\ 1)$\\[-4.5pt]
&			&$(0\ 1\ 1\ 0\ 0\ 0)$	&			&$(0\ 1\ 1\ 0\ 0\ 0)$\\[-4.5pt]
&			&$(0\ 2\ 0\ 0\ 0\ 0)$	&			&$(0\ 2\ 0\ 0\ 0\ 0)$\\[-4.5pt]
&			&$(1\ 0\ 0\ 0\ 1\ 0)$	&			&$(1\ 0\ 0\ 0\ 1\ 0)$\\[-4.5pt]
&			&$(2\ 0\ 0\ 1\ 0\ 0)$	&			&$(2\ 0\ 0\ 1\ 0\ 0)$\\[-4.5pt]
&			&$(3\ 0\ 1\ 0\ 0\ 0)$	&			&$(3\ 0\ 1\ 0\ 0\ 0)$\\[-4.5pt]
&			&$(3\ 1\ 0\ 0\ 0\ 0)$	&			&$(3\ 1\ 0\ 0\ 0\ 0)$\\

&			&						&$\ell=56$	&$(0\ 0\ 0\ 0\ 0\ 0\ 1)$\\[-4.5pt]	
&			&						&			&$(0\ 1\ 0\ 1\ 0\ 0\ 0)$\\[-4.5pt]
&			&						&			&$(1\ 0\ 0\ 0\ 0\ 1\ 0)$\\[-4.5pt]
&			&						&			&$(1\ 1\ 1\ 0\ 0\ 0\ 0)$\\[-4.5pt]
&			&						&			&$(1\ 2\ 0\ 0\ 0\ 0\ 0)$\\[-4.5pt]
&			&						&			&$(4\ 1\ 0\ 0\ 0\ 0\ 0)$\\[-4.5pt]
&			&						&			&$(5\ 0\ 0\ 0\ 0\ 0\ 0)$\\

\noalign{\hrule height 1.0pt}
\end{tabular}
\vspace{10pt}
\caption{
The outputs obtained by running Algorithm~\ref{alg:parametric} in Experiment~3 with 
$\thetaX = 0.3$, $\thetaP = 0.8$ (case~(i)) and $\thetaX = 0.15$, $\thetaP = 0.8$ (case~(ii)).
}
\label{Ex3:table}
}
\end{center}                                                                   
\end{table}

\begin{figure}[t!]
\begin{tikzpicture}
\pgfplotstableread{data/parametric/ex3/thetaX0.30_thetaP0.80_extrarv=1.dat}{\first}
\begin{loglogaxis}
[
title={case~(i)},											
xlabel={degree of freedom, $N_\ell$}, 						
ylabel={error estimate},									
ylabel style={font=\tiny,yshift=-1.0ex}, 					
ymajorgrids=true, xmajorgrids=true, grid style=dashed,		
xmin = 10^(1.5), 	xmax = 10^(6.1),						
ymin = 1*10^(-4),	ymax = 1*10^(0),						
width = 7.8cm, height=7cm,
legend style={legend pos=south west, legend cell align=left, fill=none, draw=none, font={\fontsize{9pt}{12pt}\selectfont}}
]
\addplot[blue,mark=diamond,mark size=3.0pt]	table[x=dofs, y=error_primal]{\first};
\addplot[red,mark=square,mark size=2.5pt]	table[x=dofs, y=error_dual]{\first};
\addplot[darkGreen,mark=o,mark size=2.5pt]	table[x=dofs, y=error_product]{\first};
\addplot[teal,mark=triangle,mark size=3.5pt]table[x=dofs, y=truegerr]{\first};
\addplot[black,solid,domain=10^(1.5):10^(7.8)] { 4.3*x^(-0.35) };
\addplot[black,solid,domain=10^(1.5):10^(7.8)] { 1.25*x^(-2/3) };
\node at (axis cs:8e4,8e-2) [anchor=south west] {$\mathcal{O}(N_\ell^{-0.35})$};
\node at (axis cs:3e4,1e-4) [anchor=south west] {$\mathcal{O}(N_\ell^{-2/3})$};
\legend{
{$\mu_\ell$ (primal)},
{$\zeta_\ell$ (dual)},
{$\mu_\ell\,\zeta_\ell$},
{$|G(u_{\rm ref}) - G(u_\ell)|$}
}
\end{loglogaxis}
\end{tikzpicture}
\begin{tikzpicture}
\pgfplotstableread{data/parametric/ex3/thetaX0.15_thetaP0.80_extrarv=1.dat}{\first}
\begin{loglogaxis}
[
title={case~(ii)},											
xlabel={degree of freedom, $N_\ell$}, 						
ylabel={error estimate},									
ylabel style={font=\tiny,yshift=-1.0ex}, 					
ymajorgrids=true, xmajorgrids=true, grid style=dashed,		
xmin = 10^(1.5), 	xmax = 10^(6.2),						
ymin = 1*10^(-4),	ymax = 1*10^(0),						
width = 7.8cm, height=7.0cm,
legend style={legend pos=south west, legend cell align=left, fill=none, draw=none, font={\fontsize{9pt}{12pt}\selectfont}}
]
\addplot[blue,mark=diamond,mark size=3.0pt]		table[x=dofs, y=error_primal]{\first};
\addplot[red,mark=square,mark size=2.5pt]		table[x=dofs, y=error_dual]{\first};
\addplot[darkGreen,mark=o,mark size=2.5pt]		table[x=dofs, y=error_product]{\first};
\addplot[teal,mark=triangle,mark size=3.5pt]	table[x=dofs, y=truegerr]{\first};
\addplot[black,solid,domain=10^(1.5):10^(7.8)] { 4.3*x^(-0.35) };
\addplot[darkGreen,solid,domain=10^(5):10^(6)] { 260*x^(-0.9) };
\addplot[black,solid,domain=10^(1.5):10^(7.8)] { 1.3*x^(-2/3) };
\node at (axis cs:8e4,8e-2) [anchor=south west] {$\mathcal{O}(N_\ell^{-0.35})$};
\node[darkGreen] at (axis cs:1e5,5e-3) [anchor=south west] {$\mathcal{O}(N_\ell^{-0.9})$};
\node[black] at (axis cs:3e4,1e-4) [anchor=south west] {$\mathcal{O}(N_\ell^{-2/3})$};
\legend{
{$\mu_\ell$ (primal)},
{$\zeta_\ell$ (dual)},
{$\mu_\ell\,\zeta_\ell$},
{$|G(u_{\rm ref}) - G(u_\ell)|$}
}
\end{loglogaxis}
\end{tikzpicture}
\caption{
Error estimates $\mu_\ell$, $\zeta_\ell$, $\mu_\ell\,\zeta_\ell$ and the reference error $|G(u_{\rm ref}) - G(u_\ell)|$ 
at each iteration of Algorithm~\ref{alg:parametric} with $\thetaX=0.3$, $\thetaP=0.8$ (case~(i), left) and 
$\thetaX=0.15$, $\thetaP=0.8$ (case~(ii), right) in Experiment~3 (here, $G(\uref) = 0.144497$e+$01$).
}
\label{Ex3:decay:est}
\end{figure}
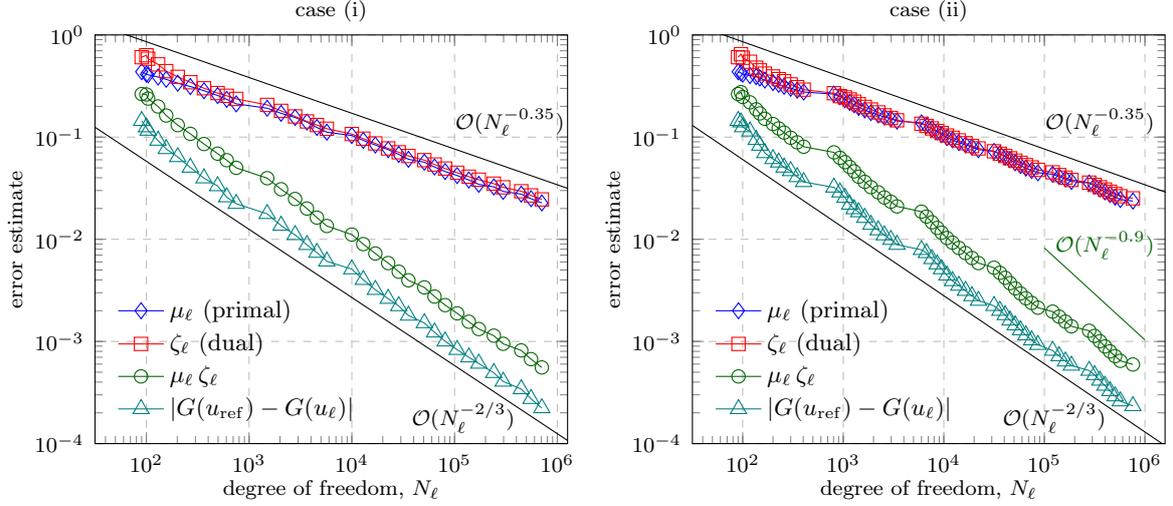

\begin{figure}[t!]
\begin{tikzpicture}
\pgfplotstableread{data/parametric/ex3/thetaX0.30_thetaP0.80_extrarv=1.dat}{\first}
\pgfplotstableread{data/parametric/ex3/thetaX0.15_thetaP0.80_extrarv=1.dat}{\second}
\begin{semilogxaxis}
[
width = 8.0cm, height=6.0cm,						
xlabel={degree of freedom, $N_\ell$}, 				
ylabel={effectivity index, $\Theta_\ell$},			
ylabel style={font=\tiny,yshift=-2.5ex}, 			
ymajorgrids=true, xmajorgrids=true, grid style=dashed,		
xmin = (6)*10^1,	xmax = 10^6,					
ymin = 1.75,		ymax = 2.6,						
ytick={1.8,2,2.3,2.6},								
legend style={legend pos=south east, legend cell align=left, fill=none, draw=none, font={\fontsize{9pt}{12pt}\selectfont}}
]
\addplot[blue,mark=o,mark size=3.0pt]		table[x=dofs, y=effindices]{\first};
\addplot[red,mark=triangle,mark size=3.5pt]	table[x=dofs, y=effindices]{\second};
\legend{
{$\thetaX = 0.3$, $\thetaP = 0.8$},
{$\thetaX = 0.15$, $\thetaP = 0.8$}
}
\end{semilogxaxis}
\end{tikzpicture}
\caption{
The effectivity indices for the goal-oriented error estimates in Experiment~3 
at each iteration of Algorithm~\ref{alg:parametric}.
}
\label{Ex3:goal:effindices}
\end{figure}

By looking now at Figure~\ref{Ex3:decay:est} we observe that the energy error estimates 
$\mu_\ell$ and $\zeta_\ell$ decay with the same rate of about $\mathcal{O}(N^{-0.35})$ for both sets of marking parameters; 
this yields an overall rate of about $\mathcal{O}(N^{-2/3})$ for $\mu_\ell\zeta_\ell$
in both cases.
However, we can see that in case (ii), the estimates $\mu_\ell\zeta_\ell$ decay with a
nearly optimal rate of $\mathcal{O}(N^{-0.9})$ during mesh refinement steps.
This is due to a smaller value of the marking parameter $\thetaX$ in this case
and consistent with what we observed in Experiment~2.

Finally, we compute the effectivity indices $\Theta_\ell$ at each iteration of the algorithm.
Here, we employ a reference Galerkin solution computed
using the triangulation $\TT_{\rm ref}$
(obtained by a uniform refinement of $\TT_L$ produced in case~(i))
and the reference index set $\gotP_{\rm ref} := \gotP_L \cup \gotM_L$,
where $\gotP_L$ and $\gotM_L$ are the index sets generated in case~(ii).
The effectivity indices are plotted in Figure~\ref{Ex3:goal:effindices}. 
This plot shows that $\mu_\ell\zeta_\ell$ provide sufficiently accurate
estimates of the error in approximating~$G(u)$,
as the effectivity indices tend to vary in a range between $2.0$ to $2.6$ for both 
sets of marking parameters.

The results of this experiment show that Algorithm~\ref{alg:parametric}
with appropriate choice of marking parameters generates
effective approximations to the mean of the quantity of interest
associated with point values of the
spatially singular solution to the considered parametric model problem.
In agreement with results of Experiment~2, we conclude that
smaller values of the spatial marking parameter $\theta_X$  (such as $\thetaX=0.15$ as in case~(ii))
are, in general, preferable, as they yield nearly optimal convergence rates
(for the error in the goal functional) during spatial refinement steps.

\section{Concluding remarks} \label{sec:remarks}
\noindent
The design and analysis of effective algorithms for the numerical solution of parametric PDEs is
of fundamental importance when dealing with mathematical models with inherent uncertainties.
In this context, adaptivity is a crucial ingredient to mitigate the so-called
\emph{curse of dimensionality}---a deterioration of convergence rates
and an exponential growth of the computational cost as the dimension of the parameter space increases.

In this paper, we developed a goal-oriented adaptive algorithm for the accurate approximation of
a quantity of interest, which is a linear functional of the solution to a parametric elliptic PDE.
The algorithm is based on an sGFEM discretization of the PDE and is driven by a novel \textsl{a~posteriori}
estimate of the energy errors in Galerkin approximations of the primal and dual solutions.
The proposed error estimate,
which is proved to be efficient and reliable (Theorem~\ref{theorem:twolevel}),
consists of two components:
a two-level estimate accounting for the error in the spatial discretization
and a hierarchical estimate accounting for the error in the parametric discretization.

We highlight two important features of our approach.
On the one hand, using a two-level error estimate,
the algorithm does not require the solution of any linear system for the estimation of the errors
arising from spatial discretizations.
When compared to the hierarchical error estimation in~\cite{bps14,bs16,br18},
this approach leads to an undeniable benefit in terms of the overall computational cost.
On the other hand, the components of the error estimates for primal and dual solutions
are used not only to guide the adaptive enhancement of the discrete space,
but also to assess the error reduction in the product of these estimates
(see Step~{\rm(v-a)} of Algorithm~\ref{alg:parametric}),
which is a reliable estimate for the approximation error in the quantity of interest.
This information about the error reduction is then employed
to choose between spatial refinement and parametric enrichment at each iteration of the algorithm (see Step~{\rm(v-b)}).

While we focused the presentation on the two-dimensional case, the results hold for arbitrary spatial
dimension, i.e., for $D \subset \R^d$ with $d \ge 1$.
Possible extensions of the work include the use of other compatible types of mesh-refinement
(e.g., newest vertex bisection or red refinement, instead of longest edge bisection)
and the treatment of other elliptic operators, different boundary conditions
as well as parameter-dependent right-hand sides $f$ in~\eqref{eq:strongform} and
parameter-dependent functions $g$ in the definition~\eqref{parametric:goal} of the goal functional.
Moreover, the focus of a future publication~\cite{conv2level} will be on the 
mathematical justification of the proposed adaptive algorithm via a rigorous convergence analysis.

We conclude by emphasizing that the software implementing
the proposed goal-oriented adaptive algorithm is available online (see~\cite{BespalovR_stoch_tifiss})
and can be used to reproduce the presented numerical results.

\bibliographystyle{alpha}
\bibliography{ref}
\end{document}